\def\thm@space@setup{%
 \thm@preskip=\parskip \thm@postskip=0pt
}
\def\th@remark{%
  \thm@headfont{\itshape}%
  \normalfont 
  \thm@preskip\parskip \thm@postskip=0pt
}
\renewcommand{\PrintDOI}[1]{%
  \href{http://dx.doi.org/#1}{{\tt DOI:#1}}%
}
\renewcommand{\eprint}[1]{#1}
\numberwithin{equation}{section}
\newtheorem{TheoIntro}{Theorem}
\newtheorem{Theorem}{Theorem}[section]
\newtheorem*{Theorem*}{Theorem}
\newtheorem{Def}[Theorem]{Definition}
\newtheorem*{Def*}{Definition}
\newtheorem{Lem}[Theorem]{Lemma}
\newtheorem{Prop}[Theorem]{Proposition}
\newtheorem{Cor}[Theorem]{Corollary}
\newtheorem{Rem}[Theorem]{Remark}
\newtheorem{Exa}[Theorem]{Example}
\newcommand\bp{\begin{proof}}
\newcommand\ep{\end{proof}}
\mathchardef\mhyph="2D
\DeclareMathOperator{\Ad}{\mathrm{Ad}}
\DeclareMathOperator{\id}{\mathrm{id}}
\DeclareMathOperator{\Ker}{\mathrm{Ker}}
\DeclareMathOperator{\Det}{\mathrm{Det}}
\DeclareMathOperator{\Sym}{\mathrm{Sym}}
\newcommand{\cop}{\mathrm{cop}}
\newcommand{\wt}{\mathrm{wt}}
\newcommand{\msZ}{\mathscr{Z}}
\newcommand{\mfu}{\mathfrak{u}}
\newcommand{\mcH}{\mathcal{H}}
\newcommand{\mcO}{\mathcal{O}}
\newcommand{\mbr}{\mathbf{r}}
\newcommand{\mbs}{\mathbf{s}}
\newcommand{\brr}{[r]}
\newcommand{\brk}{[k]}
\newcommand{\brN}{[N]}
\newcommand{\C}{\mathbb{C}}
\newcommand{\N}{\mathbb{N}}
\newcommand{\R}{\mathbb{R}}
\newcommand{\T}{\mathbb{T}}
\newcommand{\Z}{\mathbb{Z}}
\newcommand{\res}{\mathrm{res}}
\newcommand{\br}{\mathrm{br}}
\title{Representation theory of the reflection equation algebra III: Classification of irreducible representations}
\author{Stephen T. Moore}
\address{Institute of Mathematics, Polish Academy of Sciences}
\address{Now at: Xi'an Jiaotong-Liverpool University}
\email{stm862@gmail.com}
\thanks{The work of S.T.M.~was supported by the FWO grants G025115N and G032919N. S.T.M. was additionally supported by Narodowe Centrum Nauki, grant number 2017/26/A/ST1/00189.}
\begin{document}
\maketitle

\begin{abstract}
We continue our study of Hilbert space representations of the Reflection Equation Algebra, again focusing on the algebra constructed from the $R$-matrix associated to the $q$-deformation of $GL(N,\C)$ for $0<q<1$. We develop a form of highest weight theory and use it to classify the irreducible bounded $*$-representations of the reflection equation algebra.
\end{abstract}

\section*{Introduction and statement of the main result}

Let  $0<q<1$, and let $\hat{R} \in M_N(\C) \otimes M_N(\C)$ be the \emph{braid operator}
\begin{equation}\label{EqBraidOp}
	\hat{R} =  \sum_{ij} q^{-\delta_{ij}}e_{ji}\otimes e_{ij} + (q^{-1} -q)\sum_{i<j} e_{jj} \otimes e_{ii},
\end{equation}
satisfying the \emph{braid relation}
\[
\hat{R}_{12}\hat{R}_{23}\hat{R}_{12} = \hat{R}_{23} \hat{R}_{12}\hat{R}_{23},
\] 
where $\hat{R}_{23}$ denotes $1\otimes \hat{R}$, etc. $\hat{R}$ also satisfies the \emph{Hecke relation}
\[
(\hat{R}-q^{-1})(\hat{R}+q)  = \hat{R}^2 +(q-q^{-1})\hat{R} -1 = 0
\]
and is self-adjoint, $\hat{R}^* = \hat{R}$.

\begin{Def*}
	The \emph{Reflection Equation Algebra} (\emph{REA}) is the universal unital complex algebra $\mcO_{q}^{\br}(M_N(\C))$ generated by the matrix entries of 
	\[
	Z = \sum_{ij} e_{ij} \otimes Z_{ij} \in M_N(\C)\otimes \mcO_q^{\br}(M_N(\C)) = M_N(\mcO_q^{\br}(M_N(\C))),
	\] 
	with universal relations given by the \emph{reflection equation}
	\begin{equation}\label{EqUnivRelZ}
		\hat{R}_{12}Z_{23} \hat{R}_{12}Z_{23} = Z_{23} \hat{R}_{12}Z_{23}\hat{R}_{12}. 
	\end{equation}

	We define the $*$-REA $\mcO_{q}(H(N))$ as $\mcO_{q}^{\br}(M_N(\C))$ endowed with the $*$-structure $Z^* = Z$. 
\end{Def*}

The reflection equation was introduced by Cherednik \cite{Ch84} motivated by the study of quantum scattering on the half-line. The reflection equation algebra is an algebraic approach to the study of solutions of the reflection equation, and has been studied in \cite{Maj91,KS92,KS93,KSS93,Maj94,PS95,DM02,Mud02,DKM03,Mud06,KS09,JW20}. There is a close relationship between the reflection equation and Yang-Baxter equation, which extends to the algebraic structure via quantum group coactions. In the case of $\mcO_q(H(N))$, there are natural coactions (i.e. comodule $*$-algebra structures) of the quantum groups $\mcO_q(U(N))$ and $U_q(\mfu(N))$. For a more detailed introduction to the reflection equation algebra, see \cite{DCMo24a}.

This is the final part of a series of papers \cite{DCMo24a,DCMo24b} aimed at achieving a classification of representations of the reflection equation algebra. Here by a representation of $\mcO_q(H(N))$ we mean a unital $*$-homomorphism $\pi$ into the algebra of bounded operators on a Hilbert space $\mcH$,
\[
\pi:\mcO_q(H(N))\rightarrow B(\mcH).\]

We call the representation \emph{irreducible} if $\mcH$ is the only non-trivial closed invariant subspace of $\pi$.

A key result of \cite{DCMo24a} was to show that $\mcO_q(H(N))$ is \emph{type I}. This means that its representations are in principle classifiable. Further, \cite{DCMo24a} developed two invariant properties of $\mcO_q(H(N))$ irreducibles that will be needed for the classification. The first invariant associated to an irreducible $\pi$ is known as the \emph{signature}
\[
\zeta(\pi)=(N_+,N_-,N_0)\in \N^3, \qquad N_++N_-+N_0=N.\]
The signature of a representation is preserved under the respective coactions of the quantum groups $\mcO_q(U(N))$ and $U_q(\mfu(N))$, and can be thought of as a quantization of Sylvester's law of inertia.

The second invariant described in \cite{DCMo24a} is the \emph{central $*$-character}. 
\begin{Def}
The centre of $\mcO_q(H(N))$ is generated by elements $\sigma_k$, $1\leq k\leq N$. A \emph{central $*$-character} describes the scalar values in $\C$ assigned to the centre by a given irreducible.
\end{Def}
In particular, \cite{DCMo24a} explicitly classified the possible values of a central $*$-character.

$\mcO_q(H(N))$ can be viewed as a deformation of the algebra of functions on the space of $N\times N$ Hermitian matrices. In \cite{DCMo24b}, the classical limit of this viewpoint was considered, and used to motivate the introduction of \emph{shapes} for the reflection equation algebra.
\begin{Def}
A (self-adjoint) \emph{shape matrix} $S$ is a self-adjoint matrix whose entries are all unimodular or zero, such that all eigenvalues of $S$ are in $\{1,-1,0\}$.
\end{Def}
A shape matrix can be assigned to each $\mcO_q(H(N))$ irreducible $\pi$, and be thought of as labelling a family of elements lying in $\Ker(\pi)$, while simultaneously describing a family of $q$-commuting operators in $\pi(\mcO_q(H(N)))$. The main result of \cite{DCMo24b} was to show that each irreducible $\mcO_q(H(N))$ representation has a (not necessarily self-adjoint) unique shape. That the shape matrix must be self-adjoint will be shown later in this paper in Corollary \ref{cor: self adjoint}.

We can assign a notion of signature to both central $*$-characters and shapes. Further, similar to the signature, the central $*$-character and shape also display some invariance properties under quantum group coactions: The central $*$-character is preserved by the $\mcO_q(U(N))$ coaction, but not by the $U_q(\mfu(N))$ coaction. Whilst the shape is preserved by the $U_q(\mfu(N))$ coaction but not by the $\mcO_q(U(N))$ coaction.

The main aim of the current paper is to develop a \emph{highest weight theory} on pre-Hilbert spaces based on the family of $q$-commuting operators assigned to a given shape. Further, we will see that the highest weights are closely related to the central $*$-characters. Using our highest weight theory, we will then obtain a classification of the bounded $*$-representations of $\mcO_q(H(N))$ by showing the following:
\begin{TheoIntro}
Every bounded irreducible $*$-representation of $\mcO_q(H(N))$ is uniquely classified by the pair
\[
(S,s),\]
where $S$ is a self-adjoint shape and $s$ is a central $*$-character of signature compatible with $S$.
\end{TheoIntro}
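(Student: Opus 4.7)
The plan is to combine the shape theory from \cite{DCMo24b} with the central $*$-character data from \cite{DCMo24a} via a highest weight construction on a pre-Hilbert domain. Given an irreducible bounded $*$-representation $\pi$ of $\mcO_q(H(N))$ on $\mcH$, the unique shape $S$ attached to $\pi$ by \cite{DCMo24b} together with its central $*$-character $s$ furnishes the pair appearing in the theorem, provided one first establishes that $S$ must be self-adjoint. This prerequisite, announced as Corollary \ref{cor: self adjoint}, is essential because self-adjointness ensures the shape-labelled family of operators in $\pi(\mcO_q(H(N)))$ is genuinely a family of $q$-commuting operators admitting a joint spectral/weight decomposition. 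Compatibility of the signatures of $S$ and $s$ then follows from comparing both to the signature $\zeta(\pi)$ introduced in \cite{DCMo24a}.

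The heart of the argument is the highest weight theory itself. On a suitably chosen dense pre-Hilbert subspace of $\mcH$ one defines weight spaces as joint eigenspaces of the diagonal $q$-commuting operators attached to $S$, and one singles out a set of raising operators in $\pi(\mcO_q(H(N)))$ compatible with $\hat{R}$ and with the partial order on weights induced by the shape. A highest weight vector is one annihilated by all raising operators, and I expect its existence to follow from a compactness/extremality argument: since $\pi$ is bounded, the joint spectrum of the diagonal operators sits in a compact subset of $\C^N$, so some extremal weight is attained. Uniqueness of the highest weight given $(S,s)$ is where the central $*$-character does the work: the generators $\sigma_k$ of the centre act by scalars $s(\sigma_k)$, and expressing these as symmetric functions of the joint eigenvalues cuts the possible highest weights down to a finite list, while the shape dictates which entries are zero and in what position, pinning the highest weight down.

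With the highest weight identified, classification of $\pi$ up to unitary equivalence reduces to showing that $\pi$ is cyclic on any highest weight vector $v$ and that the action of the lowering generators on $v$ is fully determined by the highest weight data, the REA relations, and the $*$-structure; two irreducibles sharing $(S,s)$ then admit a canonical intertwiner pairing highest weight vectors, which extends to a unitary equivalence. For realization, given any compatible pair $(S,s)$ one would build a Verma-type module from the highest weight data, quotient by the unique maximal submodule on which the candidate inner product degenerates, and verify that the remaining inner product is positive definite and extends by continuity to a bounded $*$-representation on the Hilbert space completion. The signature compatibility condition between $S$ and $s$ is precisely the positivity criterion for this construction to succeed.

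The main obstacle I anticipate is the existence and uniqueness of the highest weight vector inside an invariant pre-Hilbert domain, rather than inside $\mcH$ as a whole. Unlike in a purely algebraic setting, the raising and lowering operators one would like to use are in general unbounded when restricted to joint eigenspaces of the shape operators, so one must simultaneously control the algebraic weight structure and the analytic domain on which these operators are defined. The combination of self-adjointness of $S$, the explicit list of central $*$-characters from \cite{DCMo24a}, and the boundedness of $\pi$ is exactly what should make the strategy work, but verifying that the extremal weight vector produced by a compactness argument lies in the right dense subspace and generates a dense cyclic submodule of $\mcH$ is the technical crux that will require the most care.
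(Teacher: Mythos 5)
Your outline identifies the right ingredients (self-adjointness of the shape, a highest weight theory built on the $q$-commuting operators attached to $S$, the central $*$-character pinning down the weight), but three of its load-bearing steps are either unjustified or would fail as stated. First, you define a highest weight vector as one annihilated by ``raising operators compatible with $\hat R$ and the partial order on weights,'' but for a general (non--big cell) shape there is no such canonical family: the paper explicitly notes that a generator $Z_{ij}$ may $q$-commute with $Z_{S,k}$ with a positive power and with $Z_{S,k'}$ with a negative power, and worse, there are non-diagonal generators commuting with \emph{all} the $Z_{S,k}$ (the set $Z_0^S$). The paper sidesteps this by defining the highest weight vector as one whose ordered weight is lexicographically maximal (existence is then immediate from boundedness, with no compactness argument needed), and it must separately prove (Lemma \ref{lemma: ZS0 expansion}) that elements of $Z_0^S$ can be rewritten as sums of products $Z_+^S Z_-^S Z_d^S$ so that they act as scalars on a highest weight vector; without this, your uniqueness claim for the highest weight vector does not go through. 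Second, your assertion that the central values ``expressed as symmetric functions of the joint eigenvalues'' determine the highest weight is only directly available for big cell representations (Proposition \ref{PropHC}); for an arbitrary shape the paper must first show that a sequence of $\mcO_q(U(N))$ coactions $\alpha_{i_1},\dots,\alpha_{i_m}$ carries any shape to a big cell shape (Sections \ref{section: coaction on shapes} and \ref{section: highest weights for shapes}), and then transport the weight formulas back (Theorem \ref{theorem: highest weights of Zsk}). This coaction machinery, including the delicate diagonalization when $\tau(k)=k+1$, is entirely absent from your proposal and is not a routine verification.

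Third, for the realization of every compatible pair $(S,s)$ you propose a Verma-type construction and claim that ``signature compatibility is precisely the positivity criterion.'' That claim is the entire difficulty, and you give no argument for it; proving positive definiteness of a Shapovalov-type form for the REA at exactly the signature-compatible parameters would be a substantial theorem on its own. The paper avoids it completely: it realizes every pair by starting from an $\mcO_q(H(N))$ $*$-character of the right signature, tensoring with a $U_q(\mfu(N))$ irreducible and with the $\mcO_q(SU(2))$-type representations $\mbs_i$ implementing the coaction, and then using a limiting argument (sending $n\to\infty$ in the $\mbs_i$ factors while the central character stays fixed) to land on the desired shape and central $*$-character. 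So while your uniqueness mechanism (reducing $z^*z$ on a highest weight vector to data determined by $(S,s)$) does match the paper's Theorem \ref{theorem: shape and highest weight determine rep} in spirit, the proposal as written has genuine gaps at the definition of highest weight vectors for general shapes, at the passage from central characters to highest weights, and at the existence step.
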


The contents of the paper are as follows. In Section \ref{section: shapes def} we review some algebraic properties of the reflection equation algebra, as well as the definition of shapes. In Section \ref{section: coaction def} we review properties of the $\mcO_q(U(N))$ and $U_q(\mfu(N))$ coactions, along with a special family of representations called big cell representations. We define our highest weight theory for the reflection equation algebra in Section \ref{section: highest weight theory}. In Section \ref{section: coaction on shapes}, we study how the $\mcO_q(U(N))$ coaction acts on shapes. This coaction is used in Section \ref{section: highest weights for shapes} to classify the possible highest weights for shapes. In Section \ref{section: fin dim reps}, the possible shapes of finite dimensional representations are classified. Finally in Section \ref{section: classification}, the results of the previous sections are combined to obtain the classification result.

We end this introduction with some common notations that we will need.

For $k\in\Z_{\geq 0}$, we write $\brk$ for the set $\{1,2,...,k\}$. We also write $\binom{\brN}{k}$ for the set of subsets $I\subseteq\brN$ with $\lvert I\rvert=k$. We will always view a set $I\subseteq\brN$ as a totally ordered set in the form
\[
I = \{i_1<...<i_k\}.\]
We will need to consider two orderings on the set of subsets $\binom{\brN}{k}$, a total order and partial order, denoted $I\leq J$ and $I\preceq J$ respectively:
\begin{itemize}
	\item $\leq$ is the lexicographic order given by
	\begin{equation}\label{EqOrdTot}
		I < J \qquad \textrm{if and only if}\qquad i_1\leq j_1,\ldots, i_{p-1}\leq j_{p-1},i_p < j_p\textrm{ for some }1\leq p \leq k.
	\end{equation}
	\item $\preceq$ is the partial order given by
	\begin{equation}\label{EqOrdPart}
		I \preceq J \qquad \textrm{if and only if}\qquad i_p \leq j_p\textrm{ for all }1\leq p \leq N.
	\end{equation}
\end{itemize}
Both of these orders extend lexicographically to orders on $\binom{\brN}{k}\times \binom{\brN}{k}$.

For $I\in \binom{\brN}{k}$ and $K\in \binom{\brk}{l}$ we define
\begin{equation}\label{EqSetUnderUpper}
	I_K = \{i_{p}\mid p \in K\},\qquad I^K = I \setminus I_K.
\end{equation}
We also denote by $I\Delta J$ the \emph{symmetric difference}, i.e.
\[
I\Delta J = (I\cup J)\setminus(I\cap J).\]
For $I \in \binom{\brN}{k}$, we call the sum of its elements the \emph{weight} of $I$, 
\begin{equation}\label{eq: wt(I) def}
\wt(I) = \sum_{r=1}^k i_r.
\end{equation}

Finally, given a bijection $\sigma: A \rightarrow B$ between finite totally ordered sets, we write
\begin{equation}\label{EqLengthSym}
	l(\sigma) = |\{(i,j) \in A\times A \mid i<j,\sigma(i)>\sigma(j)\}|
\end{equation}
for the number of inversions in $\sigma$.

\section{Shapes and properties of quantum minors}\label{section: shapes def}

The relations of $\mcO_q(H(N))$ defined from \eqref{EqUnivRelZ} can be written out explicitly in terms of generators as follows
\begin{multline}\label{EqCommOqHN}
	q^{-\delta_{ik} -\delta_{jk}} Z_{ij} Z_{kl} + \delta_{k<i}(q^{-1}-q) q^{-\delta_{ij}} Z_{kj}Z_{il} +\delta_{jk} (q^{-1}-q)q^{-\delta_{ij}}\sum_{p<j} Z_{ip} Z_{pl} + \delta_{ij} \delta_{k<i}(q^{-1}-q)^2  \sum_{p<i} Z_{kp}Z_{pl} \\
	= q^{-\delta_{il}-\delta_{jl}} Z_{kl} Z_{ij} + \delta_{l<j}(q^{-1}-q)q^{-\delta_{ij}}  Z_{kj}Z_{il} + \delta_{il}(q^{-1}-q) q^{-\delta_{ij}} \sum_{p<i} Z_{kp}Z_{pj} +\delta_{ij}\delta_{l<j} (q^{-1}-q)^2  \sum_{p<j} Z_{kp}Z_{pl}. 
\end{multline}
In this form, the $\mcO_q(H(N))$ relations are generally unwieldy and difficult to work with. For our purposes, it will be much more productive to proceed using \emph{quantum minors}. These are certain elements in $\mcO_q(H(N))$ that can be thought of as deformations of sub-determinants. We refer to \cite{DCMo24a} for detailed proofs and references for any results concerning quantum minors that follow.

Recall from Section 2.4 of \cite{DCMo24a} the terms $\hat{R}^{IJ}_{I'J'}\in\C$ for $I,J\in \binom{[N]}{k}$ and $I',J'\in \binom{[N]}{l}$. They were defined by
\[
\hat{R}^{IJ}_{I'J'} := \mbr(X_{JI},X_{I'J'}),\]
where $\mbr$ is the coquasitriangular structure on $\mcO_q(U(N))$. The elements $X_{JI}$, $X_{I'J'}$ are quantum minors in $\mcO_q(U(N))$, or alternatively the matrix coefficients of $\mcO_q(U(N))$ irreducible comodules. The quantum minors in $\mcO_q(H(N))$ we want to consider were defined in \cite{DCMo24a} as 'covariantised' versions of the $\mcO_q(U(N))$ quantum minors. 

The terms satisfy 
\begin{equation}\label{EqCondR}
	\hat{R}^{IJ}_{I'J'} \neq 0 \quad \textrm{only if}\quad  J\preceq I,J'\preceq I'\quad \textrm{and}\quad J \setminus I = J'\setminus I',\;I\setminus J = I'\setminus J',
\end{equation}
and
\[
\hat{R}^{II}_{I'I'} = q^{- |I\cap I'|}. 
\] 
There are also inverse terms $(\hat{R}^{-1})_{I'J'}^{IJ}$ that again satisfy the conditions of equation \eqref{EqCondR}, however in the inverse case we have $(\hat{R}^{-1})^{II}_{I'I'} = q^{|I\cap I'|}$. 

The $\mcO_q(H(N))$ quantum minors can be constructed inductively using Laplace expansion formulae:
\begin{Prop}
	For $I,J \in \binom{\brN}{k}$ and $K\in \binom{\brk}{m}$ with $m\leq k \leq N$ we have
	\begin{equation}\label{EqLaplExp1}
		Z_{I,J}= \sum_{P \in \binom{\brk}{m}}\sum_{S,T\in \binom{\brN}{m}}\sum_{S',T'\in \binom{\brN}{k-m}}(-q)^{\wt(P) -\wt(K)} (\hat{R}^{-1})^{S,I_K}_{I^K,T'}  \hat{R}^{J_P,T}_{T',S'} Z_{S,T} Z_{S',J^P},
	\end{equation}
	\begin{equation}\label{EqLaplExp2}
		Z_{I,J} =  \sum_{P \in \binom{\brk}{m}}\sum_{S,T\in \binom{\brN}{m}}\sum_{S',T'\in \binom{\brN}{k-m}}(-q)^{\wt(P) -\wt(K)} (\hat{R}^{-1})^{T,J_K}_{J^K,S'}  \hat{R}^{I_P,S}_{S',T'} Z_{I^P,T'} Z_{S,T},
	\end{equation}
	\begin{equation}\label{EqLaplExp3}
		Z_{I,J} =  \sum_{P \in \binom{\brk}{m}}\sum_{S,T\in \binom{\brN}{m}}\sum_{S',T'\in \binom{\brN}{k-m}}(-q)^{\wt(P) -\wt(K)} (\hat{R}^{-1})^{S,I_P}_{I^P,T'}  \hat{R}^{J_K,T}_{T',S'} Z_{S,T} Z_{S',J^K},
	\end{equation}
and
	\begin{equation}\label{EqLaplExp4}
	Z_{I,J} =  \sum_{P \in \binom{\brk}{m}}\sum_{S,T\in \binom{\brN}{m}}\sum_{S',T'\in \binom{\brN}{k-m}}(-q)^{\wt(P) -\wt(K)} (\hat{R}^{-1})^{T,J_P}_{J^P,S'}  \hat{R}^{I_K,S}_{S',T'} Z_{I^K,T'} Z_{S,T}.
\end{equation}	
\end{Prop}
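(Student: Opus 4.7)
The four formulas are instances of a single phenomenon: the $\mcO_q(U(N))$ quantum minors $X_{I,J}$ satisfy classical quantum Laplace expansions, and the $\mcO_q(H(N))$ minors $Z_{I,J}$ are covariantisations of the $X_{I,J}$ in the sense of \cite{DCMo24a}. The plan is to transport each classical identity through the covariantisation map that relates $Z$-products to $X$-products.

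First I would recall the standard quantum Laplace expansion in the FRT bialgebra: for $I,J\in\binom{\brN}{k}$ and $K\in\binom{\brk}{m}$,
\[
X_{I,J}=\sum_{P\in\binom{\brk}{m}}(-q)^{\wt(P)-\wt(K)}\,X_{I_K,J_P}\,X_{I^K,J^P},
\]
together with its three variants obtained by expanding along rows rather than columns (swap $I\leftrightarrow J$) and by reversing the order of the two factors. These are the four classical skeletons of \eqref{EqLaplExp1}--\eqref{EqLaplExp4}; the sign $(-q)^{\wt(P)-\wt(K)}$ is identical in all four and is already the one that appears in the REA formulas.

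Next I would invoke the explicit relation from \cite{DCMo24a} expressing a product $Z_{A,B}Z_{C,D}$ as a weighted sum of products $X_{A',B'}X_{C',D'}$ with $\hat{R}^{-1}$ and $\hat{R}$ coefficients of precisely the index shape $(\hat{R}^{-1})^{\,\cdot\,,A}_{\,\cdot\,,\cdot}\,\hat{R}^{\,\cdot\,,\cdot}_{\,\cdot\,,B}$, and vice versa. Substituting the classical Laplace expansion of $X_{I,J}$ in the covariantised expression for $Z_{I,J}$ and regrouping every $X$-pair back into a $Z$-pair produces a formula of exactly the form \eqref{EqLaplExp1}. The support condition \eqref{EqCondR}, namely that $\hat{R}^{IJ}_{I'J'}$ is nonzero only when $J\preceq I$, $J'\preceq I'$, $J\setminus I = J'\setminus I'$ and $I\setminus J = I'\setminus J'$, collapses the a priori free sums to those over $S,T\in\binom{\brN}{m}$ and $S',T'\in\binom{\brN}{k-m}$ actually written. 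The three remaining identities \eqref{EqLaplExp2}--\eqref{EqLaplExp4} are obtained by the same procedure applied to the other three classical identities, i.e.\ by swapping $I\leftrightarrow J$ and/or by reversing which of the two factors sits on the left in the product (which is precisely the degree of freedom that permutes the roles of $K$ and $P$ in the four statements).

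The principal obstacle is purely combinatorial: matching the four choices of index sets $(S,T,S',T')$ and the orientation of $\hat{R}^{-1}$ versus $\hat{R}$ to the four classical Laplace identities, and verifying that the sign $(-q)^{\wt(P)-\wt(K)}$ is preserved under covariantisation. The latter is true because covariantisation contributes only structure constants of the coquasitriangular form $\mbr$ (captured entirely by $\hat{R}$ and $\hat{R}^{-1}$) and never additional powers of $-q$. Once \eqref{EqLaplExp1} is handled as the template, the remaining three expansions follow uniformly by the row/column and left/right symmetries together with the fact that both $\hat{R}$ and $\hat{R}^{-1}$ satisfy \eqref{EqCondR}, so no further calculation beyond careful index bookkeeping is required.
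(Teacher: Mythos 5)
Your strategy---covariantising (transmuting) the classical quantum Laplace expansions of the FRT minors $X_{I,J}$ into identities for the REA minors $Z_{I,J}$---is sound in principle, and it is essentially the mechanism by which these identities were established in the prequel. The paper itself, however, takes a much more economical route: it cites \eqref{EqLaplExp1} and \eqref{EqLaplExp2} directly from \cite{DCMo24a}, obtains \eqref{EqLaplExp3} by combining two specific identities (equations (2.18) and (2.25)) of that paper, and then derives \eqref{EqLaplExp4} by applying the $*$-involution to \eqref{EqLaplExp3}. That last step is worth internalising: since $Z^*=Z$ the involution exchanges the row and column index sets of a minor and reverses products, so the fourth expansion comes for free from the third, whereas your plan treats all four identities as parallel covariantisation computations and thereby quadruples the bookkeeping.

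The weak point of your proposal is that it defers exactly the part that constitutes the proof. The assertion that the covariantisation map sends each of the four classical Laplace skeletons to the corresponding REA identity with ``no further calculation beyond careful index bookkeeping'' is not self-evident: the transmuted product of two minors is a double sum against values of the coquasitriangular form $\mbr$, whose index pattern must be matched term by term against the specific placement of $(\hat{R}^{-1})^{S,I_K}_{I^K,T'}$ and $\hat{R}^{J_P,T}_{T',S'}$ in \eqref{EqLaplExp1}; moreover the correspondence between the row/column and left/right variants of the classical expansion and the four stated formulas is itself permuted by the covariantisation, which reorders factors and inserts $R$-matrices. Your claim that no extra powers of $-q$ appear is true but is precisely something one verifies in the course of that computation, not an a priori fact. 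As written, the proposal is a correct high-level plan rather than a proof; to complete it you would either carry out the transmutation computation in full for at least one of the four identities, or do as the paper does and reduce everything to the two expansions already proven in \cite{DCMo24a} together with the $*$-symmetry.
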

\begin{proof}
The first two expansions were proven in \cite{DCMo24a}. The third expansion can be similarly shown by combining equations $(2.18)$ and $(2.25)$ of \cite{DCMo24a}. By applying $*$ to \eqref{EqLaplExp3}, we get then the fourth expansion.
\end{proof}

The quantum minors satisfy a general commutation relation: For all $I,J,I',J'$, we have
\begin{equation}\label{EqGenCommRel}
	\sum_{K,L,L'} \left(\sum_{P'}\hat{R}_{JK}^{P'I'}\hat{R}_{P'L'}^{IL}\right)  Z_{K,L} Z_{L',J'} =   \sum_{K,L,L'}  \left(\sum_{P'}\hat{R}_{JK}^{P'L'}\hat{R}_{P'J'}^{IL}\right) Z_{I',L'} Z_{K,L}.
\end{equation}

A key relation we will need concerning quantum minors, is a result known as the `common-submatrix Laplace expansion', and attributed to Muir \cite{Mui60,Wil86,Wil15}. We use the form given in Proposition 2.6 of \cite{DCMo24b}:
\begin{Prop}
	Let $I,J \in \binom{\brN}{k}$ and $F,G\in \binom{[k]}{k-r}$. Then for any $K,K' \in \binom{\brr}{l}$ we have 
	\begin{eqnarray} 
		& \hspace{-8cm}\delta_{K,K'}\sum_{S,T,H,L} (\widehat{R}^{-1})^{S,I}_{I_F,H} \widehat{R}^{J,T}_{H,L} Z_{S,T}Z_{L,J_G} \nonumber \\ 
		&= \sum_{A,B,C,D} \sum_{P \in \binom{\brr}{l}} (-q)^{\wt(P) -\wt(K)} (\widehat{R}^{-1})^{A,I_F\cup (I^F)_K}_{I_F\cup (I^F)^{K'},B} \widehat{R}^{J_G\cup (J^G)_P,C}_{B,D} Z_{A,C} Z_{D,J_G\cup (J^G)^P}  \label{EqMuirBr} \\
		&=  \sum_{A,B,C,D} \sum_{P \in \binom{\brr}{l}} (-q)^{\wt(P) -\wt(K)} (\widehat{R}^{-1})^{A,I_F\cup (I^F)_P}_{I_F\cup (I^F)^{P},B} \widehat{R}^{J_G\cup (J^G)_K,C}_{B,D} Z_{A,C} Z_{D,J_G\cup (J^G)^{K'}}. \label{EqMuirBr2} 
	\end{eqnarray}
\end{Prop}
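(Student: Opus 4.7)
The statement asserts the common-submatrix Laplace expansion for quantum minors—a quantum analogue of Muir's classical identity. My plan is to derive it by iterated application of the Laplace-type expansions \eqref{EqLaplExp1}--\eqref{EqLaplExp4}, exploiting the coquasitriangular compatibility of the coefficients $\widehat{R}^{IJ}_{I'J'}$ encoded in \eqref{EqCondR}.

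First I would recognise that the sum on the right-hand side of \eqref{EqMuirBr} has essentially the shape of a single application of \eqref{EqLaplExp1}: it resembles the Laplace expansion of a quantum minor of size $(k-r)+l$ with row-set $I_F\cup (I^F)_K$ and column-set $J_G\cup (J^G)_K$, split along a subset of size $l$. Formulating this precisely identifies the right-hand side with a concrete quantum minor (up to the admissibility constraints forcing $K=K'$). The companion identity \eqref{EqMuirBr2} arises analogously from \eqref{EqLaplExp2}, with rows and columns playing dual roles, so that the equality of \eqref{EqMuirBr} and \eqref{EqMuirBr2} follows once both are identified with this same minor.

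To match these with the left-hand side, I would apply \eqref{EqLaplExp3} or \eqref{EqLaplExp4} once more, this time stripping off the column block $J_G$ (or the row block $I_F$). The result is a quadruple sum involving two $\widehat{R}^{\pm 1}$-contractions separating the four blocks $I_F$, $(I^F)_K$, $J_G$, $(J^G)_{K'}$. Using the standard $\widehat{R}\,\widehat{R}^{-1}$ collapse together with the structural constraints $J\preceq I$ and $J\setminus I = J'\setminus I'$, $I\setminus J = I'\setminus J'$ from \eqref{EqCondR}, the inner sum collapses onto the double contraction $\sum_{S,T,H,L}(\widehat{R}^{-1})^{S,I}_{I_F,H}\widehat{R}^{J,T}_{H,L}\,Z_{S,T}Z_{L,J_G}$ appearing in the LHS. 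The admissibility conditions simultaneously pin down $K=K'$, producing the $\delta_{K,K'}$ factor; when $K\neq K'$, the constraints are incompatible so every summand vanishes.

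The main obstacle is the combinatorial bookkeeping: confirming that the signs $(-q)^{\wt(P)-\wt(K)}$ and the partitions of the index sets $F,G,K,K',P$ align correctly under the various reindexings, and verifying the off-diagonal cancellation summand-by-summand. A clean organisation is by induction on $l=|K|$, with base case $l=0$ trivial and the inductive step obtained by pulling off a single element from $K$ via \eqref{EqLaplExp1} with $m=1$; this mirrors the classical inductive proof of Muir's law. This is essentially the argument of Proposition 2.6 in \cite{DCMo24b}, to which one may also defer directly for the detailed verification.
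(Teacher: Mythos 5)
The paper does not prove this proposition at all: it is imported verbatim, with the remark that ``we use the form given in Proposition 2.6 of \cite{DCMo24b}''. So the only part of your proposal that matches the paper's treatment is your final sentence, where you defer to that reference; as a citation this is exactly what the paper does, and is acceptable.

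However, the sketch you offer as a would-be self-contained argument has genuine gaps and should not be presented as ``essentially the argument''. First, the right-hand side of \eqref{EqMuirBr} is \emph{not} the Laplace expansion of any single quantum minor: the common blocks $I_F$ and $J_G$ appear in \emph{both} factors, so the combined row and column index sets have size $(k-r+l)+(k-l)=2k-r>k$ and would have to be multisets; this is precisely why Muir's common-submatrix law is a genuine extension of \eqref{EqLaplExp1}--\eqref{EqLaplExp4} and cannot be obtained by ``identifying the RHS with a concrete quantum minor''. Second, the factor $\delta_{K,K'}$ does not come from termwise incompatibility of the $\widehat{R}$-supports: the conditions \eqref{EqCondR} on a single coefficient $(\widehat{R}^{-1})^{A,I_F\cup(I^F)_K}_{I_F\cup(I^F)^{K'},B}$ do not force $K=K'$, and for $K\neq K'$ the right-hand side vanishes only as a whole sum (the quantum analogue of expanding a determinant with a repeated row), which requires its own argument. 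Third, the base case $l=0$ of your proposed induction is not trivial: it asserts $\sum_{S,T,H,L}(\widehat{R}^{-1})^{S,I}_{I_F,H}\widehat{R}^{J,T}_{H,L}Z_{S,T}Z_{L,J_G}=\sum_{A,B,C,D}(\widehat{R}^{-1})^{A,I_F}_{I,B}\widehat{R}^{J_G,C}_{B,D}Z_{A,C}Z_{D,J}$, a reordering identity of the type \eqref{EqGenCommRel} in which the large and small minors switch sides, and this already needs proof. In short, the defensible content of your proposal is the appeal to \cite{DCMo24b}; the surrounding sketch would not compile into a proof without substantial additional work.
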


\subsection{The shape and rank of a representation}

The shape of an $\mcO_q(H(N))$ representation was introduced in \cite{DCMo24b} motivated by the quasi-classical limit of the corresponding Poisson structure. 
\begin{Def}\label{def:shape}
A \emph{shape} $S$ is a pair $S=(\tau,u)$, where
\begin{itemize}
	\item $\tau$ is a permutation of $\{1,2,...,N\}$, and
	\item $u$ is a map $u:\{1,2,...,N\}\rightarrow\{0\}\cup\T$ such that $u_{i}\neq 0$ if $i\neq \tau(i)$.
\end{itemize}
We call $P=\{p\in[N]\vert u_p\neq 0\}$ the \emph{associated set}, and $M=\lvert P\rvert$ the \emph{rank} of $S$. A shape is called \emph{self-adjoint} if $\tau$ is an involution and $u_{\tau(i)}=\bar{u}_i$ for all $i$.
\end{Def}
A shape $S$ can be encoded by a matrix $S\in M_N(\{0\}\cup\T)$ via $Se_i=u_ie_{\tau(i)}$. It follows that $S$ is self-adjoint if and only if the corresponding matrix is self-adjoint. Further the rank of $S$ is equal to the rank of the matrix. In general, we will consider a shape and its corresponding matrix as equivalent.

Given a shape $S$ with associated set $P$, denote $P=\{p_1<p_2<...<p_M\}$. Then we also denote
\[
P_{[k]}=\{p_1,p_2,...,p_k\}, \qquad \tau(P_{[k]}) = \{\tau(p_1),\tau(p_2),...,\tau(p_k)\}.\]

\begin{Def}
For $1\leq M\leq N$, denote by $I_M$ the 2-sided $*$-ideal of $\mcO_q(H(N))$ generated by all $Z_{IJ}$ with $I,J\in\binom{\brN}{M}$. We say an irreducible representation $\pi$ of $\mcO_q(H(N))$ has \emph{rank $M$} if $I_{M+1}\subseteq\Ker\pi$ but $I_M\not\subseteq\Ker\pi$.	
\end{Def}

\begin{Def}
Given a shape $S$ of rank $M$, we define $I^{\leq}_S$ to be the 2-sided $*$-ideal generated by $I_{M+1}$, and for all $k\leq M$, the $Z_{IJ}$ with $I,J\in\binom{\brN}{k}$ such that $(J,I)<(P_{[k]},\tau(P_{[k]}))$. 
\end{Def}

Given a shape $S$ of rank $M$, we will commonly denote 
\[
Z_{S,k}:=Z_{\tau(P_{[k]}),P_{[k]}} \text{  for  } 1\leq k\leq M. \]

\begin{Lem}\label{LemqCommpi}
In $\mcO_q(H(N))/I^{\leq}_S$, we have the following commutation relations
	\begin{equation}\label{EqqCommQuot}
		Z_{S,k} Z_{I,J} = q^{|I\cap P_{\brk}| + |I \cap \tau(P_{\brk})| - |J\cap P_{\brk}| - |J\cap \tau(P_{\brk})|} Z_{I,J}Z_{S,k}.
	\end{equation}
\end{Lem}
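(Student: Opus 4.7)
My plan is to apply the generalized commutation relation \eqref{EqGenCommRel} with the substitution $I' = \tau(P_{[k]})$, $J' = P_{[k]}$, so that $Z_{I', J'}$ coincides with $Z_{S,k}$, and then reduce the resulting identity modulo $I^\leq_S$. Under this substitution, the LHS of \eqref{EqGenCommRel} becomes a sum of products of the form $Z_{K,L} Z_{L',P_{[k]}}$, while the RHS is a sum of products $Z_{\tau(P_{[k]}),L'} Z_{K,L}$. The goal is to show that, in the quotient, each side collapses to a single scalar multiple of a single product, yielding $Z_{I,J} Z_{S,k}$ on the LHS and $Z_{S,k} Z_{I,J}$ on the RHS, and that the ratio of the two scalars is exactly the claimed $q$-factor.

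The first key step is to isolate the surviving terms on each side. By the definition of $I^\leq_S$, the minor $Z_{L', P_{[k]}}$ is killed whenever $L' < \tau(P_{[k]})$ in the lexicographic order, so on the LHS we may restrict to $L' \geq \tau(P_{[k]})$ lex; similarly $Z_{\tau(P_{[k]}), L'}$ on the RHS survives only when $L' \geq P_{[k]}$ lex. These lex restrictions must then be combined with the support conditions \eqref{EqCondR} on the $\hat{R}$-coefficients: the partial-order inequalities (such as $L \preceq I$, $L' \preceq P'$, $\tau(P_{[k]}) \preceq P'$, and the analogous ones on the RHS) together with the set-difference equalities (such as $\tau(P_{[k]}) \setminus P' = K \setminus J$ and $P' \setminus \tau(P_{[k]}) = J \setminus K$) should force the quadruple $(K, L, L', P')$ to take a unique extremal value on each side. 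On the LHS this extremal configuration is the one with $L' = \tau(P_{[k]})$, with the other indices then pinned down by the set-difference constraints so that the surviving product is $Z_{I, J} Z_{S, k}$; on the RHS the extremal is $L' = P_{[k]}$, giving surviving product $Z_{S, k} Z_{I, J}$.

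Once the unique configurations are identified, each side's surviving $\hat{R}$-coefficient reduces to a product of two diagonal values $\hat{R}^{II}_{I'I'} = q^{-|I \cap I'|}$, evaluated at the sets determined by the extremal indices. A direct computation gives the LHS scalar as $q^{-|J \cap \tau(P_{[k]})| - |J \cap P_{[k]}|}$ and the RHS scalar as $q^{-|I \cap \tau(P_{[k]})| - |I \cap P_{[k]}|}$. Equating the two reductions mod $I^\leq_S$ and rearranging then yields \eqref{EqqCommQuot} directly.

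The principal obstacle is the combinatorial bookkeeping described above: rigorously verifying that the lex-order restriction from $I^\leq_S$, combined with the partial-order and set-difference constraints from \eqref{EqCondR}, really singles out a unique surviving configuration on each side, with no further cross-terms contributing. A natural alternative that may streamline this reduction is to use the Muir-style common-submatrix expansions \eqref{EqMuirBr}--\eqref{EqMuirBr2} with the auxiliary sets $F, G$ chosen so that $I_F$ or $J_G$ becomes $P_{[k]}$ or $\tau(P_{[k]})$; the Kronecker $\delta_{K,K'}$ on the LHS of the Muir identity then collapses the sum and isolates the desired extremal contribution more transparently.
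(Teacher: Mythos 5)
Your overall strategy --- specialize the exchange relation \eqref{EqGenCommRel} so that one of the two minors becomes $Z_{S,k}$, use $I^{\leq}_S$ to kill the lexicographically small minors, and read off the ratio of the two diagonal $\hat{R}$-values --- is the right one, and is essentially how the paper proves the generalization \eqref{equation: restricted ZSk comm rels} later on. However, you have specialized the wrong pair of indices, and with your choice the claimed collapse fails. In \eqref{EqGenCommRel} the support conditions \eqref{EqCondR} constrain the summed minor $Z_{K,L}$ by $K\preceq J$ and $L\preceq I$, i.e.\ by the \emph{first} pair of free indices. If, as you propose, you set $I'=\tau(P_{\brk})$, $J'=P_{\brk}$ and leave $(I,J)$ as the arbitrary minor, then on the left-hand side the products are $Z_{K,L}Z_{L',P_{\brk}}$ with $K\preceq J$ and $L\preceq I$: these bounds involve only the arbitrary sets $I,J$, so the ideal $I^{\leq}_S$ (which only kills minors lexicographically below the shape minors of each rank) gives no leverage, and many terms $Z_{K,L}$ with $(K,L)\neq (J,I)$ survive; the index $L'$ is then determined by $(K,L)$ through the set-difference conditions and need not equal $\tau(P_{\brk})$, so $Z_{L',P_{\brk}}$ need not vanish either. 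Worse, the configuration you want to survive, $Z_{K,L}=Z_{I,J}$, would require $K=I\preceq J$ and $L=J\preceq I$, which is impossible unless $I=J$. So the left-hand side does not reduce to a scalar multiple of $Z_{I,J}Z_{S,k}$.

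The fix is to specialize the other slot: take $I=P_{\brk}$ and $J=\tau(P_{\brk})$ in \eqref{EqGenCommRel}, leaving $(I',J')$ as the arbitrary pair. Then $K\preceq \tau(P_{\brk})$ and $L\preceq P_{\brk}$; since the partial order \eqref{EqOrdPart} implies the lexicographic order \eqref{EqOrdTot}, any $(K,L)\neq(\tau(P_{\brk}),P_{\brk})$ satisfies $(L,K)<(P_{\brk},\tau(P_{\brk}))$ and hence $Z_{K,L}\in I^{\leq}_S$. With $K=J$ and $L=I$ the set-difference conditions in \eqref{EqCondR} force $P'=L'=I'$ on the left and $P'=L'=J'$ on the right, so the two sides collapse to $q^{-|I'\cap\tau(P_{\brk})|-|I'\cap P_{\brk}|}\,Z_{S,k}Z_{I',J'}$ and $q^{-|J'\cap\tau(P_{\brk})|-|J'\cap P_{\brk}|}\,Z_{I',J'}Z_{S,k}$ respectively, which is \eqref{EqqCommQuot} after renaming $(I',J')$ to $(I,J)$. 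Your final scalar bookkeeping happens to match this corrected version, but as written the argument does not go through; the Muir-expansion detour you mention is not needed once the substitution is made in the correct slot.
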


\begin{Def}
Given a shape $S=(\tau,u)$, we say an irreducible representation $\pi$ of $\mcO_q(H(N))$ has shape $S$ if
\begin{itemize}
	\item $I^{\leq}_S\subseteq \Ker\pi$,
	\item $\Ker(\pi(Z_{S,k}))=0$ for all $k\leq M$, and
	\item $(-1)^{l(\tau_{\mid P_{[k]}})}\overline{u_{P_{[k]}}}\pi(Z_{S,k})$ is a positive operator for all $k\leq M$.
\end{itemize}	
\end{Def} 	

The main result of \cite{DCMo24b} was the following:
\begin{Theorem}\label{theorem: unique shape}
Every irreducible representation $\pi$ of $\mcO_q(H(N))$ has a unique shape $S$. Further, $\pi$ has rank $M$ if and only if $S$ has rank $M$. 
\end{Theorem}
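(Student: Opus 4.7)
The plan is to construct the shape $S = (\tau, u)$ inductively from rank $M$ downwards. Given an irreducible $\pi$ of rank $M$ (which is well-defined since the ideals $I_k$ form a descending chain of $*$-ideals), I would first examine the set of pairs $(J, I) \in \binom{\brN}{M} \times \binom{\brN}{M}$ with $\pi(Z_{IJ}) \neq 0$, which is non-empty by the definition of rank, and extract the lex-minimal pair. This pair is the candidate $(P_{[M]}, \tau(P_{[M]}))$, and to assemble a shape I need (i) to show it has the bijective/permutation structure of Definition \ref{def:shape}, (ii) to identify the unimodular scalar $u_{P_{[M]}}$, and (iii) to show this structure propagates consistently to all lower ranks $k < M$.

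The central tool for (i) and (iii) is Muir's common-submatrix Laplace expansion \eqref{EqMuirBr}--\eqref{EqMuirBr2}. By comparing the two forms against each other for various choices of $F, G, K, K'$ and exploiting $I_{M+1} \subseteq \Ker\pi$, most terms vanish inside $\pi(\mcO_q(H(N)))$, yielding enough commutation-type relations to force the lex-minimal pair to come from a genuine bijection $P_{[M]} \to \tau(P_{[M]})$, so $\tau$ is indeed a permutation on the associated set. For (ii), the general commutation relation \eqref{EqGenCommRel}, reduced modulo $I^{\leq}_S$, specializes to the $q$-commutation of Lemma \ref{LemqCommpi}. Combined with Schur's lemma applied to $\pi$, this forces $\pi(Z_{S,M})\pi(Z_{S,M})^*$ to be a positive scalar; polar decomposition then yields the unimodular value $u_{P_{[M]}}$, with the sign $(-1)^{l(\tau_{|P_{[M]}})}$ arising from antisymmetry in the minor and the positivity clause in the definition of ``shape'' pinning down the phase uniquely.

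To descend from level $k$ to level $k-1$, I would apply the Laplace expansions \eqref{EqLaplExp1}--\eqref{EqLaplExp4} with $m=1$, expressing $Z_{S,k}$ as a sum involving rank-$(k-1)$ minors multiplied by single generators. Since $\pi(Z_{S,k})$ is essentially invertible by step (ii), this lets me transfer minimality and positivity from level $k$ to level $k-1$, producing the nested chain $P_{[1]} \subset P_{[2]} \subset \cdots \subset P_{[M]} = P$ demanded by the definition of a shape. The main obstacle I expect is precisely the \emph{coherence} of this chain: showing that the lex-minimal pair at level $k-1$ must be the prefix $(P_{[k-1]}, \tau(P_{[k-1]}))$ of the level-$k$ data. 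A hypothetical incompatible minimal pair at level $k-1$ should, via Muir's expansion, force a non-zero rank-$k$ minor lex-smaller than $(P_{[k]}, \tau(P_{[k]}))$, contradicting level-$k$ minimality, but making this precise requires careful case analysis of which terms in \eqref{EqMuirBr}--\eqref{EqMuirBr2} survive in $\pi$ and of the resulting sign conventions. Once existence and uniqueness of $S$ are secured, the rank assertion is automatic: $|P| = M$ by construction, and conversely the shape of a rank-$M$ representation cannot have rank $< M$ since some $Z_{IJ}$ with $|I|=|J|=M$ must act non-trivially.
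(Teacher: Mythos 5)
A preliminary remark: this paper does not actually prove Theorem \ref{theorem: unique shape}; it is quoted as the main result of \cite{DCMo24b} and used as a black box (the present paper only adds self-adjointness of the shape in Corollary \ref{cor: self adjoint}). So there is no in-paper proof to compare against, and your proposal can only be measured against the strategy of the cited paper. The toolbox you identify --- lex-minimal non-vanishing minors at each rank, the Muir expansions \eqref{EqMuirBr}--\eqref{EqMuirBr2}, the $q$-commutation of Lemma \ref{LemqCommpi}, and an irreducibility argument to extract the unimodular datum $u$ --- is visibly the one used in \cite{DCMo24b} (the fragments cited here refer to exactly these lemmas), so as an outline this is the right shape of argument.

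There are, however, two substantive problems. First, a concrete error: Schur's lemma does not force $\pi(Z_{S,M})\pi(Z_{S,M})^*$ to be a scalar. By \eqref{EqqCommQuot}, $Z_{S,M}$ $q$-commutes with the generators with non-trivial exponents, and applying $*$ to \eqref{EqqCommQuot} shows $Z_{S,M}^*$ $q$-commutes with the \emph{same} exponents; hence $Z_{S,M}Z_{S,M}^*$ merely $q^2$-commutes with the generators and is a non-scalar positive operator with spectrum of the form $\{c\,q^{2i}\}$ --- precisely the diagonalizability exploited at the start of Section \ref{section: highest weight theory}. What irreducibility actually gives is that in the polar decomposition $\pi(Z_{S,M})=U\lvert\pi(Z_{S,M})\rvert$ the positive part inherits the same $q$-commutation, so the partial isometry $U$ commutes with every generator and is a unimodular scalar once one has separately shown $\Ker\pi(Z_{S,M})=0$ (which does follow, since that kernel is an invariant subspace). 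Second, the step you yourself flag as the ``main obstacle'' --- that the lex-minimal pair at rank $k-1$ is the prefix $(P_{[k-1]},\tau(P_{[k-1]}))$ of the rank-$k$ data, and that the resulting assignment extends to a genuine permutation of $\brN$ (which in particular requires $\tau(P)=P$ as sets, not just a bijection $P\to\tau(P)$) --- is not a deferrable technicality: it is the entire content of the theorem. Without it one has only an unrelated family of minimal pairs, one per rank, rather than a single shape $(\tau,u)$. As written, the proposal is a credible plan containing one false intermediate claim and omitting the central coherence argument.
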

We will extend this result by showing that the shape of any irreducible representation must be self-adjoint in Corollary \ref{cor: self adjoint}.

We finish this section by noting some other algebraic properties of $\mcO_q(H(N))$ that we will need:

\begin{Lem}\label{Lem OqHn quotient} Let $J_M$ be the 2-sided $*$-ideal generated by the $Z_{ij}$ with $1\leq i \leq M$ and $1\leq j \leq N$. Then 
	\[
	\mcO_q(H(N))/J_M \cong \mcO_q(H(N-M)),\qquad (Z_{ij})_{1\leq i,j\leq N} \mapsto \begin{pmatrix} 0_M & 0_{M,N-M} \\ 0_{N-M,M} & (Z_{ij})_{1\leq i,j\leq N-M}\end{pmatrix}
	\]
	is a $*$-homomorphism. 
\end{Lem}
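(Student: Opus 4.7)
The plan is to establish the isomorphism by constructing an explicit $*$-homomorphism $\phi\colon\mcO_q(H(N)) \to \mcO_q(H(N-M))$ whose kernel is exactly $J_M$, then exhibiting a two-sided inverse on the quotient. First observe that since $Z_{ij}^* = Z_{ji}$ and $J_M$ is a $*$-ideal, $J_M$ in fact contains every generator $Z_{ij}$ with $\min(i,j)\leq M$; in particular $\mcO_q(H(N))/J_M$ is generated by the images of the $Z_{ij}$ with $i,j>M$.

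Define $\phi$ on generators by $\phi(Z_{ij})=\tilde Z_{i-M,\,j-M}$ if $i,j>M$ and $\phi(Z_{ij})=0$ otherwise, where $\tilde Z$ denotes the generator matrix of $\mcO_q(H(N-M))$. To verify well-definedness as a $*$-homomorphism, I would check that the universal relations \eqref{EqCommOqHN} are preserved by a case split on whether all of $i,j,k,l$ exceed $M$. If at least one index is $\leq M$, each of the eight terms of \eqref{EqCommOqHN} maps to zero: the four bilinear terms each contain a factor $Z_{ab}$ with $\min(a,b)\leq M$, while each sum term carries a Kronecker delta ($\delta_{jk}$, $\delta_{ij}\delta_{k<i}$, $\delta_{il}$, or $\delta_{ij}\delta_{l<j}$) which combined with the summation range ($p<j$ or $p<i$) forces every summand to contain at least one index $\leq M$. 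If all of $i,j,k,l>M$, the bilinear terms translate verbatim, and in each sum only the terms with $p>M$ survive; reindexing $p\mapsto p-M$ converts these into exactly the corresponding sums in the $\mcO_q(H(N-M))$-relation, since Kronecker deltas and strict inequalities are invariant under a uniform shift. Compatibility with $*$ is immediate: $\phi(Z_{ij})^* = \tilde Z_{j-M,\,i-M} = \phi(Z_{ji}) = \phi(Z_{ij}^*)$.

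Since $\phi$ annihilates the generators of $J_M$, it descends to $\bar\phi\colon\mcO_q(H(N))/J_M \to \mcO_q(H(N-M))$. To see $\bar\phi$ is an isomorphism I construct a two-sided inverse $\psi\colon\mcO_q(H(N-M)) \to \mcO_q(H(N))/J_M$ by $\psi(\tilde Z_{\tilde i\tilde j}) = Z_{\tilde i+M,\,\tilde j+M}+J_M$. Well-definedness of $\psi$ follows from the same case analysis read in reverse: lifting an $\mcO_q(H(N-M))$-relation through the shift $+M$, the only discrepancy from the ambient $\mcO_q(H(N))$-relation consists of summation terms with $p\leq M$, each of which lies in $J_M$ by the first observation. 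Mutual inverseness is then direct on generators.

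The main (and only) obstacle is the bookkeeping needed to confirm that every one of the eight terms of \eqref{EqCommOqHN} genuinely vanishes in the quotient whenever any one of $i,j,k,l$ drops below $M+1$; once the interplay between each Kronecker delta and its associated summation range is unpacked the verification is mechanical, and everything else reduces to tracking the index shift $p\mapsto p-M$.
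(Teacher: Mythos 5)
Your proof is correct. The paper states this lemma without proof (it is recalled as a known algebraic property from the earlier papers in the series), and your argument is the standard one that such a proof would consist of: use the $*$-ideal property to see that $J_M$ kills all $Z_{ij}$ with $\min(i,j)\leq M$, verify via the universal relations \eqref{EqCommOqHN} that the shift map and its inverse are well defined (the key observation being that every summation term with a small index $p$ either vanishes under $\phi$ or lies in $J_M$, and that Kronecker deltas and strict inequalities are invariant under the uniform shift), and conclude by exhibiting mutually inverse maps on generators.
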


The centre of $\mcO_q(H(N))$ is a polynomial algebra over $N$ generators, denoted $\sigma_1,...,\sigma_N$. An explicit form for these generators was given in \cite{JW20}, see also \cite{Flo20}.

\begin{Theorem}\label{TheoCentrPol}
	The centre of $\mcO_q(H(N))$ is generated by the self-adjoint algebraically independent elements $\sigma_1,...,\sigma_N$ given by
	\[
	\sigma_k =  q^{2Nk} \sum_{I\in \binom{\brN}{k}} \sum_{\sigma \in \Sym(I)}q^{-2\wt(I)} (-q)^{-l(\sigma)} q^{-a(\sigma)} Z_{i_k\sigma(i_k)}\ldots Z_{i_1 \sigma(i_1)},
	\] 
	where $\Sym(I)$ is the group of permutations of $\brN$ fixing $\brN\setminus I$, where $wt(I)$ and $l(\sigma)$ are defined by \eqref{eq: wt(I) def} and \eqref{EqLengthSym} respectively, and where $a(\sigma) = |\{l\mid \sigma(l)<l\}|$.
\end{Theorem}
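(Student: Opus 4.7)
The plan is to establish three independent properties — centrality of each $\sigma_k$, algebraic independence of the collection $\{\sigma_1,\dots,\sigma_N\}$, and that these exhaust the whole centre — and then verify self-adjointness as a separate step. Since the explicit formula is attributed to \cite{JW20,Flo20}, my strategy is to reproduce or invoke the arguments there; I would not attempt an independent derivation of the formula from scratch.

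For centrality I would recognise $\sigma_k$ as a braided quantum trace. The sum over permutations $\sigma\in\Sym(I)$ weighted by $(-q)^{-l(\sigma)}$ is precisely the Hecke-algebra $q$-antisymmetriser on $k$ strands, so the inner sum equals $\Tr_{q,1\ldots k}(A_k Z_{1}Z_{2}\cdots Z_{k})$ for an appropriate projector $A_k$ and an appropriate $q$-trace with weight $q^{-a(\sigma)}$ supplying the Drinfeld element. The key computation is then the standard reflection-equation identity
\[
\hat{R}_{12}Z_{23}\hat{R}_{12}Z_{23} = Z_{23}\hat{R}_{12}Z_{23}\hat{R}_{12},
\]
iterated and partially traced, which together with the cyclicity of the $q$-trace on the Hecke quotient shows that these traces commute with every matrix entry $Z_{ij}$. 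This is the content of \cite{JW20} and can be quoted directly.

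For algebraic independence and exhaustion of the centre, I would pass to a filtration. The REA admits a PBW-type basis in the monomials $Z_{ij}$, and the associated graded algebra is the commutative polynomial ring $\C[Z_{ij}]$ on entries of a Hermitian matrix. Under this degeneration $\sigma_k$ has top-degree symbol proportional to the classical $\Tr(Z^k)$, and by the Chevalley--Weyl theorem the classical power sums $\Tr(Z^k)$, $1\le k\le N$, are algebraically independent and generate the $\GL(N)$-invariant polynomials, which is exactly the classical centre. A standard lifting argument then propagates algebraic independence from the graded ring back to $\mcO_q(H(N))$. The main obstacle is the exhaustion direction: one must rule out a strictly larger centre in the quantum deformation. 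This is handled by a Poincaré-series / dimension count, comparing the graded dimension of the span of monomials in the $\sigma_k$ with that of the centre computed on the PBW basis; both agree with the polynomial algebra on $N$ generators of degrees $1,\dots,N$.

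Finally, self-adjointness follows by applying the involution to the explicit formula: since $Z^\ast=Z$ the operation reverses the product $Z_{i_k\sigma(i_k)}\cdots Z_{i_1\sigma(i_1)}$, and a reindexing $\sigma\mapsto\sigma^{-1}$ restores the original product order. One then checks the combinatorial identity matching the prefactors $(-q)^{-l(\sigma)}q^{-a(\sigma)}$ under $\sigma\mapsto\sigma^{-1}$, using that $q$ is real and that $l(\sigma^{-1})=l(\sigma)$; the residual discrepancy in $a(\sigma)$ versus $a(\sigma^{-1})$ is absorbed by the overall $q^{2Nk-2\wt(I)}$ normalisation, which was chosen in \cite{JW20} precisely to make $\sigma_k^\ast=\sigma_k$ transparent.
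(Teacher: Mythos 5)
The paper does not prove this theorem at all: it is quoted verbatim from \cite{JW20} (see also \cite{Flo20}), so your decision to invoke those references rather than rederive the formula is exactly what the paper does, and to that extent the ``approach'' matches. The centrality argument you sketch (Hecke antisymmetriser plus braided $q$-trace, cyclicity on the Hecke quotient) is also the standard one and is fine to cite.

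The supporting arguments you sketch for the remaining claims, however, contain genuine gaps. First, the top-degree symbol of $\sigma_k$ is not $\Tr(Z^k)$: the inner sum over $\Sym(I)$ degenerates to the principal minor $\det(Z_{I,I})$, so the symbol is the $k$-th elementary symmetric function $e_k$ of the eigenvalues (consistent with Proposition \ref{PropHC}), not the power sum $p_k$; the conclusion survives since both families generate the invariants, but the identification is wrong. Second, your exhaustion argument conflates the centre of the associated graded (which is the whole commutative ring $\C[Z_{ij}]$) with the \emph{Poisson} centre of the semiclassical bracket; the correct statement is that $\mathrm{gr}$ of the centre lands in the Poisson centre of the Semenov--Tian--Shansky bracket, which one must separately identify with $\C[Z]^{\GL_N}$. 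Moreover, ``computing the graded dimension of the centre on the PBW basis'' is precisely the hard step, not an independent check --- in \cite{JW20} the upper bound comes from relating the REA to the locally finite part of $U_q(\mathfrak{gl}_N)$ and its known Harish--Chandra description, and some such input is unavoidable. Third, the self-adjointness verification does not close as written: applying $*$ gives $Z_{\sigma(i_1)i_1}\cdots Z_{\sigma(i_k)i_k}$, which after the substitution $j_p=\sigma(i_p)$ is the $\sigma^{-1}$-monomial taken in a \emph{permuted} order of the non-commuting factors, so reindexing alone does not restore the term for $\sigma^{-1}$; and since $a(\sigma)+a(\sigma^{-1})=|\{l\mid\sigma(l)\neq l\}|$ depends on $\sigma$, the discrepancy in the prefactors cannot be absorbed into the global normalisation $q^{2Nk-2\wt(I)}$. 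Self-adjointness requires either genuine use of the commutation relations \eqref{EqCommOqHN} or an indirect argument (e.g.\ evaluating central characters on a separating family of $*$-representations, where Proposition \ref{PropHC} shows the values are real).
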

We will often call $\sigma_1$ and $\sigma_N$ the \emph{quantum trace} and the \emph{quantum determinant} respectively. The quantum determinant is related to the quantum minors by
\[
Z_{[N]} := Z_{[N],[N]} = q^{-N(N-1)}\sigma_N.\]

Finally we recall from \cite{Mud02} the classification of $*$-characters
\[
\chi:\mcO_q(H(N))\rightarrow \C.\]

\begin{Theorem}\label{Character classification theorem}
	Each $*$-character $\chi$ of $\mcO_q(H(N))$ is of the form $\chi = \chi_{k,l,a,c,y}$ where 
	\begin{itemize}
		\item $k,l\in \Z_{\geq0}$ with $k+l \leq N-l$,
		\item $a>0$ and $c\in \R^{\times}$, 
		\item $y = (y_{0},y_{1},\ldots,y_{l-1})$ with $|y_i| = 1$, 
	\end{itemize}
	such that
	\[
	\chi(Z) = c\left(a \sum_{i = k+l+1}^{N} e_{ii} - a^{-1} \sum_{i=N-l+1}^N e_{ii} +   \sum_{i=0}^{l-1} (y_{i} e_{k+i+1,N-i} + \overline{y_{i}} e_{N-i,k+i+1})\right).
	\] 
\end{Theorem}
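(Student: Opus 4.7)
A $*$-character $\chi$ is determined by the Hermitian matrix $X:=\chi(Z)\in M_N(\C)$, and applying $\chi$ to~\eqref{EqUnivRelZ} converts the problem into one of classifying self-adjoint matrices $X\in M_N(\C)$ satisfying the numerical reflection equation
\[
\hat{R}_{12}\,X_2\,\hat{R}_{12}\,X_2 = X_2\,\hat{R}_{12}\,X_2\,\hat{R}_{12}, \qquad X_2:=1\otimes X.
\]
The strategy I would pursue combines the already-available shape and central-character invariants with a direct analysis of this numerical equation.

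First, I would apply the shape theorem (Theorem~\ref{theorem: unique shape}) to the one-dimensional representation $\chi$. Its unique shape $S=(\tau,u)$, self-adjoint by Corollary~\ref{cor: self adjoint}, has some rank $r=N-k\leq N$, and since every $Z_{S,j}$ becomes a nonzero scalar under $\chi$, the involution $\tau$ must consist of the identity on $\{1,\ldots,k\}$ together with a collection of transpositions on the remaining indices. The containment $I_S^{\leq}\subseteq \Ker\chi$ then forces all entries of $X$ outside the predicted "diagonal-plus-anti-diagonal" pattern to vanish; in particular the support of $X$ is confined to the associated set $P$, and the $\mcO_q(U(N))$ coaction, which preserves the class of characters, can be used to align the transpositions of $\tau$ into the reversed anti-diagonal pattern $(k+i+1,N-i)$ for $i=0,\ldots,l-1$.

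Second, I would invoke the central character classification of~\cite{DCMo24a}: by Theorem~\ref{TheoCentrPol} each $\chi(\sigma_j)$ is a real scalar and is a $q$-deformed symmetric function of $X$, so the classification of central $*$-characters determines the multiset of eigenvalues of $X$. This would yield that the $r$ nonzero eigenvalues split as $ca$ with multiplicity $N-k-l$ and $-ca^{-1}$ with multiplicity $l$, for some $a>0$ and $c\in\R^{\times}$, and that $k+2l\leq N$. The specific anti-diagonal couplings correspond to $l$ two-by-two blocks $c\begin{pmatrix}0 & y_i\\ \overline{y_i} & a-a^{-1}\end{pmatrix}$ with $|y_i|=1$, whose eigenvalues are precisely $ca$ and $-ca^{-1}$, while the diagonal of the remaining "plain" block on indices $k+l+1,\ldots,N-l$ must be the constant $ca$. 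Substituting the resulting ansatz into the numerical reflection equation and expanding using~\eqref{EqBraidOp} and the Hecke identity $\hat{R}^2=1+(q^{-1}-q)\hat{R}$, one checks that the equation collapses to the constraints $|y_i|=1$ already present in the ansatz, which simultaneously establishes the converse direction.

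The principal obstacle is the interplay of the first two steps: the shape theorem controls only the support of $X$, whereas the RE couples diagonal and anti-diagonal entries nonlinearly, and the central character classification alone does not distinguish between different arrangements of the eigenvalues on the support. An efficient route around this is to induct on $N$ using Lemma~\ref{Lem OqHn quotient}: first strip off the kernel (reducing from $N$ to $N-k$), then peel off one coupling pair at a time to reduce $l$, at each stage matching the contribution to the $\sigma_j$'s with the expected eigenvalue data until the rank-zero case $\chi\equiv 0$ is reached.
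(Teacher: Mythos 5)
The paper does not prove this statement at all: it is recalled verbatim from Mudrov \cite{Mud02}, so there is no internal argument to compare against, and your sketch has to stand on its own. As written it contains one step that fails outright. You assert that ``the $\mcO_q(U(N))$ coaction, which preserves the class of characters, can be used to align the transpositions of $\tau$ into the reversed anti-diagonal pattern.'' The coaction does not preserve characters: composing $\Ad^U_q$ with any nontrivial irreducible $\mbs_{i_1}\otimes\cdots\otimes\mbs_{i_j}$ of $\mcO_q(U(N))$ turns a character into an infinite-dimensional representation (this is exactly how the paper manufactures big cell representations in Theorem~\ref{theorem: construction of irreducibles}), while the only character-preserving compositions come from the $*$-characters of $\mcO_q(U(N))$, which act by conjugation by a diagonal unitary and therefore can adjust the phases $y_i$ but can never move the support or change the involution $\tau$. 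The nested anti-diagonal pattern $\tau(k+i+1)=N-i$ is precisely the hard combinatorial content of the theorem, and it has to be extracted directly from the relations \eqref{EqCommOqHN}: in a one-dimensional representation all commutators vanish, so once $Z_{11}=\cdots=Z_{j-1,1}=0$ the identity $Z_{1,j+1}Z_{j+1,1}-Z_{j+1,1}Z_{1,j+1}=(q^{2}-1)Z_{1j}Z_{j1}$ forces $|Z_{j1}|^{2}=0$ unless $j=N$; iterating this (as in the $Z_{11}=0$ half of the proof of Theorem~\ref{theorem: fd shapes}) is what pins down $\tau(1)=N$, $\tau(2)=N-1$, and so on. Your proposal has no substitute for this step.

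Two further points. The vanishing of the off-pattern entries of $X$ does not follow from $I_{S}^{\leq}\subseteq\Ker\chi$, which only kills the minors lying lexicographically below the $(P_{[k]},\tau(P_{[k]}))$; the correct mechanism is Lemma~\ref{LemqCommpi}: in a one-dimensional representation the nonzero scalar $Z_{S,k}$ commutes with everything, so every $Z_{IJ}$ carrying a nonzero $q$-exponent in \eqref{EqqCommQuot} must vanish. Also, the $\sigma_k$ of Theorem~\ref{TheoCentrPol} are $q$-weighted symmetric functions (note the shifts $q^{2i-2}$ in Proposition~\ref{PropHC}), not the coefficients of the characteristic polynomial of $X$, so they do not literally determine the eigenvalue multiset of $X$; and since the shape formalism of \cite{DCMo24b} and Sections 3--6 here is developed with the character classification already in hand as background, you would need to verify that none of the shape results you invoke secretly rely on \cite{Mud02} before this route could count as non-circular. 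A self-contained proof in the spirit you intend is available, but it should run through the elementary commutator identities above rather than through the coaction.
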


\section{The $\mcO_q(U(N))$ and $\mcO_q(T(N))$ comodule $*$-algebra structure, and big cell representations}\label{section: coaction def}

To classify the representations of $\mcO_q(H(N))$, we will need to use not just the algebraic structure of $\mcO_q(H(N))$, but also its coactions via comodule $*$-algebra structures over $\mcO_q(U(N))$ and $\mcO_q(T(N))$. The $\mcO_q(T(N))$ coaction in particular can be used to describe an essential family of $\mcO_q(H(N))$ representations previously studied in \cite{DCMo24a} known as \emph{big cell representations}.

\subsection{The $\mcO_q(H(N))$ comodule $*$-algebra over $\mcO_q(U(N))$}

Let $X = \sum\limits_{i,j}^{N}e_{ij}\otimes X_{ij}$ for variables $X_{ij}$, $1\leq i,j\leq N$. 

\begin{Def}\cite{FRT88}. 
	The FRT algebra (= Faddeev-Reshetikhin-Takhtajan) $\mcO_q(M_N(\C))$ is the unital $\C$-algebra defined by the universal relations
	\begin{equation}
		\hat{R}_{12}X_{13}X_{23} = X_{13}X_{23}\hat{R}_{12}.
	\end{equation}
	It has a bialgebra structure given by
	\begin{equation}
		(\id\otimes\Delta)X = X_{12}X_{13}, \qquad (\id\otimes\varepsilon)X = I_N.
	\end{equation}
\end{Def}
$\mcO_q(M_N(\C))$ has a unique central element \cite[Theorem 4.6.1]{PW91} given by
\begin{equation}\label{EqQuantDet}
	\Det_q(X) := X_{[N]} = X_{[N],[N]} = \sum_{\sigma \in S_N} (-q)^{l(\sigma)} X_{1,\sigma(1)}X_{2,\sigma(2)}\cdots X_{N,\sigma(N)}.
\end{equation}
By considering the localisation $\mcO_q(M_N(\C))[\Det_q(X)^{-1}]$, $X$ becomes invertible, and we can define a $*$-structure via
\begin{equation}
	X^*:=X^{-1}.
\end{equation} 
Under this consideration, $X$ now defines a Hopf $*$-algebra, which we denote by $\mcO_q(U(N))$. 

\begin{Prop}\label{prop: UN coaction}
	There is a coaction on $\mcO_q(H(N))$ given by 
	\begin{equation}
		\Ad^{U}_q: \mcO_q(H(N)) \rightarrow \mcO_q(H(N)) \otimes \mcO_q(U(N)),\qquad Z \mapsto X_{13}^{*}Z_{12}X_{13}.
	\end{equation}
	With respect to the generating elements, the coaction is given by 
	\begin{equation}
		\Ad^{U}_q: Z_{ij}\mapsto \sum\limits_{k,l}Z_{kl}\otimes X_{ki}^{*}X_{lj},
	\end{equation}
	further the central elements are coinvariant, i.e.
	\begin{equation}
		\Ad^{U}_q: \sigma_{i}\mapsto \sigma_{i}\otimes 1.
	\end{equation}
\end{Prop}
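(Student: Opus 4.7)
The plan is to verify in sequence: (a) the generator-level formula, (b) that $\Ad^U_q$ is a $*$-algebra homomorphism (both preserving $*$ and the reflection equation), (c) the coassociativity and counit axioms, and (d) coinvariance of $\sigma_k$.

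Step (a) is direct: expanding $X_{13}^{*}Z_{12}X_{13}$ componentwise in matrix leg $1$, and using that $Z$ and $X$ live in commuting tensor factors, the $(i,j)$-entry is immediately $\sum_{k,l} Z_{kl} \otimes X_{ki}^{*}X_{lj}$. For the $*$-preservation half of (b), $Z^{*}=Z$ gives $Z_{ij}^{*}=Z_{ji}$, and combined with $(X_{ki}^{*}X_{lj})^{*}=X_{lj}^{*}X_{ki}$ together with a swap of summation indices one gets $\Ad^U_q(Z_{ij})^{*}=\Ad^U_q(Z_{ji})=\Ad^U_q(Z_{ij}^{*})$. The substantive part of (b) is to show $Z':=X_{13}^{*}Z_{12}X_{13}$ itself satisfies the reflection equation
\[
\hat R_{12}Z'_{23}\hat R_{12}Z'_{23} = Z'_{23}\hat R_{12}Z'_{23}\hat R_{12}
\]
in $M_N(\C)\otimes M_N(\C)\otimes \mcO_q(H(N))\otimes \mcO_q(U(N))$, with $Z'_{23}=X_{24}^{*}Z_{23}X_{24}$. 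The inputs are the FRT relation $\hat R_{12}X_{14}X_{24}=X_{14}X_{24}\hat R_{12}$, its $*$-conjugate (using $\hat R^{*}=\hat R$ and $X^{*}=X^{-1}$), and the reflection equation for $Z$ itself. Braiding the outer $X^{*}$'s and $X$'s past the two intervening $\hat R_{12}$'s isolates an inner $\hat R_{12}Z_{23}\hat R_{12}Z_{23}$; rewriting this via the reflection equation for $Z$ and reassembling the $X$'s on the outside produces the right-hand side.

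For (c), the counit axiom follows immediately from $\varepsilon(X_{ij})=\varepsilon(X_{ij}^{*})=\delta_{ij}$, collapsing $\sum_{k,l} Z_{kl}\otimes X_{ki}^{*}X_{lj}$ to $Z_{ij}$. Coassociativity is a routine four-index calculation using $\Delta(X_{ki})=\sum_p X_{kp}\otimes X_{pi}$ and its $*$-analogue $\Delta(X_{ki}^{*})=\sum_p X_{kp}^{*}\otimes X_{pi}^{*}$: both $(\Ad^U_q\otimes\id)\circ\Ad^U_q$ and $(\id\otimes\Delta)\circ\Ad^U_q$ reduce, after relabelling, to the same expression $\sum_{k,l,p,r} Z_{kl}\otimes X_{kp}^{*}X_{lr}\otimes X_{pi}^{*}X_{rj}$. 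For (d), substituting the explicit formula of Theorem \ref{TheoCentrPol} for $\sigma_k$ into $\Ad^U_q$ and using the FRT relations to telescope adjacent $X^{*}X$ pairs inserted between successive $Z$-factors down to identities yields $\sigma_k\otimes 1$; equivalently, one recognises $\sigma_k$ as a quantum trace of a twisted power of $Z$, in which form invariance under conjugation by the unitary $X$ is manifest. The main obstacle throughout is the reflection-equation verification in (b); the remaining items are essentially bookkeeping with $*$, $\Delta$ and $\varepsilon$.
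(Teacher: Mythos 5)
The paper gives no proof of this proposition (it defers to \cite{DCF19}), so there is no in-text argument to compare against; your overall strategy is the standard one. Steps (a)--(c) are correct: the entrywise expansion, the $*$-compatibility, and the comodule axioms are routine, and your outline of the reflection-equation check for $X_{24}^{*}Z_{23}X_{24}$ does go through — the identity $\hat R_{12}X_{24}^{*}=X_{24}^{*}X_{14}^{*}\hat R_{12}X_{14}$ (from the $*$-conjugate of the FRT relation together with $\hat R^{*}=\hat R$ and $X^{*}=X^{-1}$) lets you strip the outer $X^{*}$'s and $X$'s, expose the inner word $\hat R_{12}Z_{23}\hat R_{12}Z_{23}$, apply the reflection equation for $Z$, and reassemble.

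The gap is in step (d). The ``telescoping'' you describe does not occur: applying $\Ad^{U}_q$ to a product $Z_{i_k\sigma(i_k)}\cdots Z_{i_1\sigma(i_1)}$ produces adjacent factors of the form $X_{l_1,\sigma(i_k)}X^{*}_{k_2,i_{k-1}}$, whose inner indices $\sigma(i_k)$ and $i_{k-1}$ are distinct elements of $I$ and are not independently summed over all of $[N]$, so no pair contracts to the identity via $XX^{*}=1$. Your fallback (``$\sigma_k$ is a quantum trace, invariance under conjugation by the unitary $X$ is manifest'') names the right mechanism but is not a proof: $\mcO_q(U(N))$ is noncommutative, so cyclicity of the trace is unavailable, and the entries of $X$, $X^{*}$ and $Z$ do not commute. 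Already for $k=1$, coinvariance of $\sigma_1=q^{2N}\sum_i q^{-2i}Z_{ii}$ amounts to the identity
\[
\sum_i q^{-2i}\,X_{ki}^{*}X_{li}\;=\;\sum_i q^{-2i}\,S(X_{ik})X_{li}\;=\;\delta_{kl}\,q^{-2k},
\]
which is \emph{not} a matrix identity $XDX^{-1}=D$ (the factors are in the wrong order to multiply as matrices); it holds only because the weight matrix $\diag(q^{\pm 2i})$ implements $S^{2}$, so that $\sum_i q^{-2i}S(X_{ik})X_{li}$ can be rewritten as $q^{-2l}\,S\bigl(\sum_i S(X_{li})X_{ik}\bigr)$ and collapsed by the antipode axiom. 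For general $k$ one needs the analogous identity for the quantum minors, $\sum_{I}q^{-2\wt(I)}X_{KI}^{*}X_{LI}=\delta_{KL}q^{-2\wt(K)}$, which is exactly what the coefficients $q^{2Nk-2\wt(I)}$ in Theorem \ref{TheoCentrPol} are tuned to. So the coinvariance of the $\sigma_k$ is a genuine computation resting on the relation between the quantum-trace weights and the square of the antipode; as written, your step (d) would fail if executed literally.
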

For proof see for example \cite{DCF19}.

From \cite[Proposition 10.30]{KS97} or \cite[Theorem 7.4.1]{Maj95} we can further describe the coaction in terms of quantum minors
\begin{Prop}
In terms of $\mcO_q(H(N))$ quantum minors, the $\mcO_q(U(N))$ coaction is given by
\begin{equation}\label{equation: qminor coaction}
\Ad^{U}_q: Z_{IJ}\mapsto\sum\limits_{K,L} Z_{KL}\otimes X_{KI}^{*}X_{LJ}.
\end{equation}
\end{Prop}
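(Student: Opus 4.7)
The plan is to induct on $k = |I| = |J|$. The base case $k=1$ is the coaction on generators from Proposition \ref{prop: UN coaction}, with $X_{\{l\},\{j\}} = X_{lj}$. For the inductive step, I would apply one of the Laplace expansions, say \eqref{EqLaplExp1} with some $1 \leq m < k$, to write $Z_{I,J}$ as a sum of products $Z_{S,T} Z_{S',J^P}$ of lower-rank quantum minors weighted by scalar $\hat{R}$-coefficients. Since $\Ad^U_q$ is a $*$-algebra homomorphism, applying it to both sides and using the inductive hypothesis on each factor gives
\[
\Ad^U_q(Z_{I,J}) = \sum_{K_1,K_2,L_1,L_2} Z_{K_1,L_1} Z_{K_2,L_2} \otimes Y_{K_1 K_2 L_1 L_2},
\]
where $Y_{K_1 K_2 L_1 L_2}$ is a sum of products $X_{K_1,S}^* X_{L_1,T} X_{K_2,S'}^* X_{L_2,J^P}$ weighted by the same scalar $\hat{R}$-coefficients that appear in \eqref{EqLaplExp1}.

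I would then identify $Y_{K_1 K_2 L_1 L_2}$ with $X_{K_1 \cup K_2, I}^* X_{L_1 \cup L_2, J}$ (vanishing unless both unions are disjoint) by invoking the analogue of \eqref{EqLaplExp1} for the $\mcO_q(U(N))$ quantum minors $X_{IJ}$. Re-indexing $K = K_1 \cup K_2$ and $L = L_1 \cup L_2$ and applying \eqref{EqLaplExp1} once more, this time to $Z_{K,L}$ in the first tensor factor, produces the desired $\sum_{K,L} Z_{K,L} \otimes X_{K,I}^* X_{L,J}$.

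The main obstacle is justifying the recombination step for the $\mcO_q(U(N))$ factors: one must show that the products $X_{K,I}^* X_{L,J}$ satisfy the same Laplace identity \eqref{EqLaplExp1} (with the same $\hat{R}$-coefficients) as the $Z_{I,J}$ themselves. This is essentially the content of \cite[Proposition 10.30]{KS97} and \cite[Theorem 7.4.1]{Maj95}, which identify the $X_{KI}$ as matrix coefficients of the $k$-th exterior power corepresentation of $\mcO_q(U(N))$; the $Z_{IJ}$ were constructed in \cite{DCMo24a} precisely as covariantised versions of these minors so that all such combinatorial identities transfer between the two families. A more conceptual route is to bypass the induction entirely and invoke these references directly: the stated formula is nothing but the matrix coefficient expression, on the $k$-th exterior power, of the adjoint corepresentation $Z \mapsto X^* Z X$, so the inductive calculation above simply verifies this identification at the level of generators and quantum minors.
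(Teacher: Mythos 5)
The paper offers no independent proof of this proposition: it is stated as a direct consequence of \cite[Proposition 10.30]{KS97} and \cite[Theorem 7.4.1]{Maj95}, so your closing suggestion to ``bypass the induction entirely and invoke these references directly'' is precisely what the paper does. Your primary inductive route is reasonable in outline, but the recombination step contains a genuine gap as written. After applying $\Ad^U_q$ to a Laplace expansion of $Z_{I,J}$ and using the inductive hypothesis on each factor, the $\mcO_q(U(N))$ legs appear in the interleaved order $X^*_{K_1,S}X_{L_1,T}\,X^*_{K_2,S'}X_{L_2,J^P}$, whereas the target $X^*_{K,I}X_{L,J}$ expands (via the ordinary, uncovariantised Laplace expansion of $\mcO_q(U(N))$ minors, followed by applying $*$) into terms of the form $X^*_{K_2,S'}X^*_{K_1,S}\,X_{L_1,T}X_{L_2,J^P}$. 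Matching the two requires commuting $X_{L_1,T}$ past $X^*_{K_2,S'}$ and reordering the starred factors, and these commutations are exactly what generates the $\hat{R}^{IJ}_{I'J'}$ and $(\hat{R}^{-1})^{IJ}_{I'J'}$ coefficients appearing in \eqref{EqLaplExp1}. The minors $X_{IJ}$ themselves satisfy the plain FRT Laplace expansion, not \eqref{EqLaplExp1}, so your assertion that the products $X^*_{K,I}X_{L,J}$ ``satisfy the same Laplace identity with the same $\hat{R}$-coefficients'' is not something you can simply invoke --- verifying it amounts to redoing the transmutation computation by which the $Z_{I,J}$ were defined in \cite{DCMo24a}. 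Since you flag this obstacle yourself and fall back on the citation, the proposal is acceptable in substance, but the inductive argument should not be presented as though the recombination were automatic.
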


For our purposes, we will generally not be considering the $\mcO_q(U(N))$ coaction directly, but will instead be combining it with certain $\mcO_q(U(N))$ representations.

Denote by $\mcO_q(SU(2))$ the quotient of $\mcO_q(U(2))$ given by $\Det_q(X)\mapsto 1$. Then the generating matrix of $\mcO_q(SU(2))$ has the form 
\[
X = \begin{pmatrix}
	X_{11} & -qX_{21}^{*} \\ X_{21} & X_{11}^{*}
\end{pmatrix}\]
Every $\mcO_q(SU(2))$ irreducible is of the form $\mbs\otimes\chi_\theta$, where $\chi_{\theta}$ is the $*$-character
\[
\chi_{\theta}(X) = \begin{pmatrix}
	e^{2\pi i\theta} & 0\\ 0 & 	e^{-2\pi i \theta}
\end{pmatrix}, \qquad \theta\in\R/\Z,\]
and $\mbs$ is the $l^{2}(\N)$ representation defined by
\begin{equation}
X_{11}e_i = \sqrt{1-q^{2i}}e_{i-1}, \qquad X_{21}e_{i} = q^i e_i.
\end{equation}

There are a family of quotient maps 
\[
\res_i:\mcO_q(U(N))\rightarrow\mcO_q(SU(2))\] given by
\[
\res_i:\begin{pmatrix}
	X_{ii} & X_{i,i+1}\\ X_{i+1,i} & X_{ii}\end{pmatrix}\mapsto\begin{pmatrix}
	X_{11} & X_{12} \\ X_{21} & X_{22}\end{pmatrix}, \qquad X_{kl}\mapsto\delta_{kl}\text{  otherwise.}\]
We denote by $\mbs_i$ the representation given by the composition 
\[
\mbs\circ\res_i:\mcO_q(U(N))\rightarrow B(l^{2}(\N)).\] 
This representation is again irreducible, and every irreducible $\mcO_q(U(N))$ representation can be constructed as a tensor product of such representations:

\begin{Theorem}\cite{Koe91,LS91}.
Every irreducible representation of $\mcO_q(U(N))$ is of the form 
\[
\mbs_w\otimes\chi_{\theta},\]
where $\chi_{\theta}$ is an $\mcO_q(U(N))$ $*$-character, and $\mbs_w=\mbs_{i_{1}}\otimes...\otimes\mbs_{i_{k}}$, for $w=s_{i_{1}}...s_{i_{k}}$ a reduced word in $\Sym(\brN)$ transpositions $s_{i}=(i,i+1)$. Further, $\mbs_w\simeq \mbs_v$ if and only if $w$ and $v$ are equal as reduced words in $\Sym(\brN)$.
\end{Theorem}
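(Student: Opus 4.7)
The plan is to follow the Soibelman-style approach based on the quantum Bruhat decomposition. Each $\mbs_i = \mbs\circ \res_i$ is irreducible because $\mbs$ is an irreducible $*$-representation of $\mcO_q(SU(2))$ and $\res_i$ is surjective. For a reduced word $w = s_{i_1}\cdots s_{i_k}$, the candidate representation $\mbs_w \otimes \chi_\theta$ is defined by composing the iterated coproduct $\Delta^{(k)}: \mcO_q(U(N))\to \mcO_q(U(N))^{\otimes(k+1)}$ with the obvious tensor-product representation on the target. To prove that $\mbs_w$ itself is irreducible, I would induct on the length of $w$: the reduced-word condition ensures that at each step the freshly introduced off-diagonal generator $X_{i_k,i_k+1}$ acts non-trivially on the new tensor factor and cannot be absorbed into the previously constructed commutant, so the full space is cyclic and Schur's lemma applies.

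The heart of the proof, and the main obstacle, is completeness: every irreducible representation $\pi$ of $\mcO_q(U(N))$ must be of the form $\mbs_w\otimes \chi_\theta$. The standard strategy proceeds in two stages. First, one uses the Hopf subalgebra generated by the diagonal entries $X_{ii}$ together with the Hecke-type FRT relations to extract a $*$-character $\chi_\theta$ corresponding to the ``diagonal part'' of $\pi$. Second, one locates the lexicographically smallest off-diagonal generator $X_{i,j}$ with $\pi(X_{i,j})\neq 0$ and, using the FRT commutation relations and the factorisation of the coproduct, splits off a tensor factor isomorphic to some $\mbs_i$ and passes to a representation of a smaller quotient of $\mcO_q(U(N))$ (essentially an $\mcO_q(U(N-1))$-type block). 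Iterating and reading off the indices produces a reduced expression for the Weyl group element $w$.

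For the equivalence statement, different reduced expressions of the same $w\in \Sym(\brN)$ produce equivalent representations because the braid relations $s_is_{i+1}s_i = s_{i+1}s_is_{i+1}$ and the commutations $s_is_j = s_js_i$ for $|i-j|\geq 2$ can each be upgraded to explicit unitary intertwiners on the tensor-product Hilbert spaces, which is a short direct calculation on generators. Conversely, distinct pairs $(w,\theta)$ are distinguished by combining the joint spectrum of the commuting family $\{\pi(X_{ii})\}$, which recovers $\theta$, with the smallest Bruhat stratum supporting $\pi$, which recovers $w$. I expect the main technical obstacle to lie in the second stage of the completeness proof: ensuring that the inductive splitting-off procedure produces a genuinely reduced expression and that each quotient indeed lands in a strictly smaller FRT-type algebra. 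This is precisely the operator-algebraic incarnation of the quantum Bruhat decomposition, and controlling it is where most of the work sits.
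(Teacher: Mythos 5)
This theorem is not proved in the paper at all: it is imported verbatim from Koelink \cite{Koe91} and Levendorskii--Soibelman \cite{LS91}, so there is no in-paper argument to compare against. Your outline does follow the standard route of those references (the quantum analogue of the Bruhat/Schubert cell decomposition, tensoring the elementary representations $\mbs_i$ along a reduced word), so the \emph{approach} is the right one. But as a proof it has real gaps rather than just omitted routine details.

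First, the irreducibility of $\mbs_w$ for a reduced word $w$ is asserted, not proved. Saying that the new generator ``cannot be absorbed into the previously constructed commutant, so the full space is cyclic and Schur's lemma applies'' conflates two different things: for a $*$-representation, irreducibility means the commutant is $\C\cdot 1$, and exhibiting a cyclic vector does not give that. The actual argument in \cite{LS91} (and in Korogodski--Soibelman's treatment) analyses the explicit action of specific matrix coefficients as weighted shifts on $\ell^2(\N)^{\otimes k}$ and shows directly that any operator commuting with all of them is scalar; the reduced-word hypothesis enters through a concrete non-vanishing statement for the matrix coefficient $X_{w}$ attached to the cell, not through a vague ``absorption'' principle. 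Second, the completeness claim --- that \emph{every} irreducible is of this form --- is the substance of the theorem, and your proposal explicitly defers it (``I expect the main technical obstacle to lie in the second stage\dots controlling it is where most of the work sits''). The splitting-off of a tensor factor $\mbs_i$ from an arbitrary irreducible $\pi$ requires showing that the coproduct intertwines $\pi$ with $\pi'\otimes\mbs_i$ for a representation $\pi'$ supported on a strictly smaller cell, and that the resulting word is reduced; this is precisely the step you have not supplied. Finally, for the ``only if'' direction of the equivalence statement, distinguishing $\mbs_w$ from $\mbs_v$ for $w\neq v$ needs an invariant such as the set of $X_{ij}$ killed by the representation (equivalently the Bruhat stratum), which you name but do not verify actually separates the representations. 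In short: correct skeleton, consistent with the cited sources, but the three load-bearing steps (irreducibility, exhaustion, separation) are all left unproven.
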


\begin{Def}
We denote by $\alpha_{i}$ the coaction given by the composition
\begin{equation}\label{def: coaction UN}
(\id\otimes \mbs_i)\circ\Ad^{U}_q:\mcO_q(H(N))\rightarrow\mcO_q(H(N))\otimes B(l^2(\N)).
\end{equation}
\end{Def}

\subsection{The $\mcO_q(H(N))$ comodule $*$-algebra over $\mcO_q^\epsilon(T(N))$}

Choose $\epsilon\in\{1,-1,0\}^N$, and denote 
\[
\epsilon_{(i,j]}:=\prod\limits_{k=i+1}^{j}\epsilon_{k}.\]
We will always assume that $\epsilon$ is in a \emph{standard form}. By this, we mean we assume that $\epsilon$ consists of $M$ non-zero terms followed by $N-M$ zero terms. We refer to $M$ as the \emph{rank of $\epsilon$}.

We define the $\epsilon$-deformed braid operator
\[
\hat{R}_\epsilon= \sum\limits_{i,j}q^{-\delta_{ij}}e_{ji}\otimes e_{ij}+(q^{-1}-q)\sum\limits_{i<j}\epsilon_{(i,j]}e_{jj}\otimes e_{ii}.\]
Let $T=\sum\limits_{i,j}^{N}e_{ij}\otimes T_{ij}$, where we put $T_{ij}=0$ for $i>j$. Let $T^*$ be the formal adjoint of $T$, where we further identify $T_{ii}^{*}=T_{ii}$.
\begin{Def}
The $*$-algebra $\mcO_q^\epsilon(T(N))$ is given by the universal relations
\begin{equation}
\hat{R}_{12}T_{13}T_{23} = T_{13}T_{23}\hat{R}_{12}, \qquad T_{23}^*T_{13}^*\hat{R}_{12} = \hat{R}_{12}T_{23}^*T_{13}^*, \qquad T_{23}\hat{R}_{12}T_{23}^* = T_{13}^*\hat{R}_{\epsilon,12}T_{13}.
\end{equation}
We denote the case $\epsilon=\{1,1,...,1\}$ by $\mcO_q(T(N))$.
\end{Def}
The localisation of $\mcO_q(T(N))$ forms a Hopf algebra with coproduct
\[
(\id\otimes\Delta)T = T_{12}T_{13}.\]
For other $\epsilon$, $\mcO_q^\epsilon(T(N))$ forms a comodule algebra over $\mcO_q(T(N))$. For details of the following, see for example \cite[Theorem 8.33]{KS97}:
\begin{Prop}
The localisation of $\mcO_q(T(N))$ by $T_{ii}^{-1}$, $1\leq i\leq N$, is isomorphic as a Hopf $*$-algebra to $U_q(\mfu(N))^{\cop}$, with the isomorphism defined by
\[
T_{ii}\mapsto K_i^{-1}, \qquad T_{i,i+1}\mapsto (q^{-1}-q)F_iK_{i+1}^{-1}.\]
\end{Prop}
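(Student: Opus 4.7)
The plan is to check that the stated formulas define a $*$-algebra morphism on generators, verify the RTT-style defining relations of the localised $\mcO_q(T(N))$ in $U_q(\mfu(N))^{\cop}$, and then exhibit an inverse by expressing all the $T_{ij}$ in terms of the images of the $E_i,F_i,K_i$. Throughout, the $\epsilon=(1,\ldots,1)$ form of the relations
\[
\hat{R}_{12}T_{13}T_{23}=T_{13}T_{23}\hat{R}_{12},\qquad T_{23}\hat{R}_{12}T_{23}^{*}=T_{13}^{*}\hat{R}_{12}T_{13}
\]
should be read entry by entry, which reduces everything to finite collections of quadratic relations among the generators $T_{ii}$ and $T_{i,i+1}$ once one notes that all higher off-diagonal entries $T_{i,j}$ with $j>i+1$ are forced, by the RTT relations, to be $q$-commutators of neighbours $T_{i,i+1},T_{i+1,i+2},\ldots,T_{j-1,j}$ modified by the diagonal entries.

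First I would define
\[
\Phi:T_{ii}\mapsto K_i^{-1},\qquad T_{i,i+1}\mapsto (q^{-1}-q)F_iK_{i+1}^{-1},\qquad T_{i,j}\mapsto 0\;(i>j),
\]
extend $\Phi$ to all $T_{ij}$ with $j>i+1$ inductively via the RTT relation (which forces, schematically, $T_{i,j}$ to be $T_{i,i+1}T_{i+1,j}-q\,T_{i+1,j}T_{i,i+1}$ up to diagonal factors), and verify in $U_q(\mfu(N))^{\cop}$ the defining Chevalley relations $K_iF_jK_i^{-1}=q^{-a_{ij}}F_j$ and the Serre relations for the $F_i$'s. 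These are exactly the entries of the RTT relation once one expands $\hat{R}_{12}T_{13}T_{23}=T_{13}T_{23}\hat{R}_{12}$ component by component in the upper-triangular setting, and similarly its $*$-conjugate yields the relations for the $E_i:=(q^{-1}-q)^{-1}K_i T_{i,i+1}^{*}$. The mixed relation $T_{23}\hat{R}_{12}T_{23}^{*}=T_{13}^{*}\hat{R}_{\epsilon,12}T_{13}$ with $\epsilon=(1,\ldots,1)$ reduces, on the diagonal, to the defining relation
\[
[E_i,F_j]=\delta_{ij}\,\frac{K_i^{-1}K_{i+1}-K_iK_{i+1}^{-1}}{q^{-1}-q},
\]
which is precisely the $\mfsl_2$-triple relation for each $i$, packaged in FRT form; this is the calculation I expect to be slightly tedious but purely mechanical.

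Next I would verify compatibility with the coalgebra and $*$-structures. The coproduct on $\mcO_q(T(N))$ is $(\id\otimes\Delta)T=T_{12}T_{13}$, i.e.\ $\Delta T_{ij}=\sum_k T_{ik}\otimes T_{kj}$; restricting to $i=j$ and $j=i+1$ gives
\[
\Delta T_{ii}=T_{ii}\otimes T_{ii},\qquad \Delta T_{i,i+1}=T_{ii}\otimes T_{i,i+1}+T_{i,i+1}\otimes T_{i+1,i+1}.
\]
Under $\Phi$ these become $\Delta K_i^{-1}=K_i^{-1}\otimes K_i^{-1}$ and, after the $(q^{-1}-q)$ factor,
\[
\Delta(F_iK_{i+1}^{-1})=K_i^{-1}\otimes F_iK_{i+1}^{-1}+F_iK_{i+1}^{-1}\otimes K_{i+1}^{-1},
\]
which matches the \emph{opposite} coproduct on $U_q(\mfu(N))$ (recall $\Delta F_i=F_i\otimes K_i^{-1}+K_i\otimes F_i$ in the usual convention used in \cite{KS97}); this is why the target is $U_q(\mfu(N))^{\cop}$ and not $U_q(\mfu(N))$ itself. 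The $*$-structure matches because the identification $T_{ii}^{*}=T_{ii}$ corresponds to $K_i^{-1}$ being self-adjoint, while $T_{i,i+1}^{*}\mapsto (q^{-1}-q)K_{i+1}^{-1}E_i$ is exactly the compact-real-form $*$-structure on $U_q(\mfu(N))$.

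Finally, to upgrade the morphism to an isomorphism, I would construct the inverse: from the images define $K_i=\Phi(T_{ii})^{-1}$ (which makes sense after localising $\mcO_q(T(N))$ at the $T_{ii}$), $F_i=(q^{-1}-q)^{-1}\Phi(T_{i,i+1})K_{i+1}$, and $E_i$ via the $*$-structure, and check that the Chevalley--Serre presentation of $U_q(\mfu(N))$ is generated by these elements with only the relations already verified. The main obstacle is the bookkeeping for the Serre relations and the fully explicit entry-by-entry check of the mixed RTT-$*$ relation; everything else is essentially a dictionary. Since the entire statement is well-documented, I would simply cite \cite[Theorem 8.33]{KS97} and record the isomorphism on generators, leaving the check of relations to the reference.
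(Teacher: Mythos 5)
The paper offers no proof of this proposition at all --- it simply points the reader to \cite[Theorem 8.33]{KS97} --- and your proposal ends by doing exactly the same, so the two approaches coincide. Your preliminary sketch of the generators-and-relations dictionary (entry-by-entry RTT relations $\leftrightarrow$ Chevalley--Serre relations, the mixed $T$--$T^{*}$ relation $\leftrightarrow$ the $[E_i,F_j]$ relation, triangular coproduct $\leftrightarrow$ opposite coproduct, localisation at the $T_{ii}$ to invert the $K_i$) is the standard and correct way one would verify the statement directly, modulo the usual convention-dependence in the exact formula for $\Delta F_i$.
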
 
Similar to the $\mcO_q(U(N))$ case, we have an $\mcO_q(H(N))$ coaction given by
\begin{equation}
\Ad^T_q:\mcO_q(H(N))\rightarrow \mcO_q(H(N))\otimes\mcO_q(T(N)), \qquad Z\mapsto T_{13}^*Z_{12}T_{13}.
\end{equation}
In terms of quantum minors, we have
\begin{equation}\label{equation: T coaction q minors}
	\Ad^T_q:Z_{IJ}\mapsto\sum\limits_{K,L}Z_{KL}\otimes T_{KI}^*T_{LJ}.
\end{equation}
In particular, for the leading minors $Z_{[k]} = Z_{[k],[k]}$ we have
\begin{equation}
\Ad^T_q:Z_{[k]} \mapsto Z_{[k]}\otimes\prod\limits_{1\leq i\leq k}T_{ii}^{2}.
\end{equation}	
Considering $\Ad^T_q$ with $Z$ replaced by the identity matrix, we get the following well-known result \cite[Theorem 3]{Bau00}
\begin{Prop}
There is an injective $*$-homomorphism 
\[
i_T:\mcO_q(H(N))\rightarrow\mcO_q(T(N)), \qquad Z\mapsto T^*T.\]
\end{Prop}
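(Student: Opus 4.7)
The plan is to establish two things about the map $i_T(Z) = T^*T$: first, that it is a well-defined unital $*$-algebra homomorphism, and second, that it is injective. Well-definedness reduces to a short calculation using the defining relations of $\mcO_q(T(N))$; injectivity is the main obstacle and will require a more structural argument.

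For well-definedness, the slickest route is to observe that the assignment $Z_{ij}\mapsto\delta_{ij}$ defines a $*$-character $\chi:\mcO_q(H(N))\to\C$. Indeed, the reflection equation is tautological for $Z=I$, and $I=I^*$ is self-adjoint. Then $i_T=(\chi\otimes\id)\circ\Ad^T_q$ is a composition of $*$-algebra homomorphisms and hence is one itself. Equivalently, one can verify directly that $W:=T^*T$ satisfies the reflection equation: writing $W_{23}=T_{23}^*T_{23}$, one manipulates
\begin{align*}
\hat{R}_{12} W_{23} \hat{R}_{12} W_{23}
&= \hat{R}_{12} T_{23}^* (T_{23} \hat{R}_{12} T_{23}^*) T_{23}
= \hat{R}_{12} T_{23}^* T_{13}^* \hat{R}_{12} T_{13} T_{23} \\
&= T_{23}^* T_{13}^* \hat{R}_{12}^2 T_{13} T_{23}
= T_{23}^* T_{13}^* \hat{R}_{12} T_{13} T_{23} \hat{R}_{12} \\
&= T_{23}^* (T_{23} \hat{R}_{12} T_{23}^*) T_{23} \hat{R}_{12}
= W_{23} \hat{R}_{12} W_{23} \hat{R}_{12},
\end{align*}
using in turn the cross relation $T_{23}\hat{R}_{12}T_{23}^*=T_{13}^*\hat{R}_{12}T_{13}$ (which holds because $\epsilon=(1,\ldots,1)$), the commutation $\hat{R}_{12}T_{23}^*T_{13}^*=T_{23}^*T_{13}^*\hat{R}_{12}$, the FRT relation $\hat{R}_{12}T_{13}T_{23}=T_{13}T_{23}\hat{R}_{12}$, and the cross relation once more. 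Compatibility with $*$ is immediate since $(T^*T)^*=T^*T$.

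The main obstacle is injectivity, which I would attack via a PBW / filtration argument. Both algebras are flat PBW-type deformations: $\mcO_q(H(N))$ has a basis of ordered monomials in the $Z_{ij}$, while $\mcO_q(T(N))$ has a basis of ordered monomials in $T_{ij}$ ($i\le j$), $T_{ii}$, and $T_{ij}^*$ ($i\ge j$). Filter $\mcO_q(T(N))$ by total degree in $\{T,T^*\}$; each entry $(T^*T)_{ij}=\sum_k T^*_{ki}T_{kj}$ has a distinguished leading term (e.g.\ $T_{ii}T_{ij}$ for $i\le j$) in an appropriately refined monomial order. I would then show that ordered products of $(T^*T)_{ij}$'s, corresponding to the PBW monomials of $\mcO_q(H(N))$, yield linearly independent leading monomials in $\mcO_q(T(N))$, so their images are linearly independent. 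The bookkeeping is delicate: one must choose the refinement of the monomial order carefully so that the quadratic relations of $\mcO_q(T(N))$ do not produce collisions. A secondary, more conceptual backup is to observe that at the classical limit $q=1$ the map $T\mapsto T^*T$ is the Cholesky decomposition, realizing upper-triangular matrices with positive diagonal as positive-definite Hermitian matrices, and the induced pullback on polynomial function algebras is injective; flatness of both deformations then transports injectivity to generic $q$, and the claim for our fixed $q\in(0,1)$ follows.
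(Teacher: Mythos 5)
Your well-definedness argument is correct and is essentially the paper's own route: the paper obtains $i_T$ by applying the coaction $\Ad^T_q$ to the trivial solution $Z=I$ of the reflection equation, which is exactly your factorization $i_T=(\chi\otimes\id)\circ\Ad^T_q$; your direct check that $W=T^*T$ satisfies $\hat{R}_{12}W_{23}\hat{R}_{12}W_{23}=W_{23}\hat{R}_{12}W_{23}\hat{R}_{12}$ using the three defining relations of $\mcO_q(T(N))$ is also fine. The issue is injectivity, which is the actual content of the proposition. The paper does not prove it either: it cites it as a known theorem of Baumann \cite[Theorem 3]{Bau00} (equivalently, the Joseph--Letzter-type realization of $\mcO_q(H(N))$ inside $U_q(\mfu(N))^{\cop}$ via $l$-functionals). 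Neither of your two proposed strategies closes this gap as written.

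Concretely: (1) the PBW/leading-term argument is only a plan --- you do not exhibit the monomial order, do not verify that the quadratic relations of $\mcO_q(T(N))$ are compatible with it, and you assume without proof the triangular decomposition of $\mcO_q(T(N))$ into ordered monomials in $T^*$, diagonal, and $T$; you yourself flag the bookkeeping as ``delicate,'' and that bookkeeping is precisely where the theorem lives. (2) The classical-limit/flatness backup is logically insufficient for the statement as posed. Injectivity of the specialization at $q=1$ (Cholesky) gives, in each finite-dimensional graded piece, a Zariski-open set of parameters where the map has full rank; taking the union over all graded pieces you only conclude injectivity for $q$ outside a countable exceptional set. The proposition asserts injectivity for the \emph{fixed} $q\in(0,1)$ of the paper, and nothing in your argument rules out that this $q$ lies in the exceptional set. (The result is in fact true for all non-roots of unity, but that requires the genuine representation-theoretic input of \cite{Bau00}, not a genericity argument.) The honest fix here is either to carry out the leading-term computation in full, or to do what the paper does and invoke Baumann's theorem.
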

In particular, the localisation of $\mcO_q(H(N))$ by the leading quantum minors $Z_{[k]}$ embeds into $U_q(\mfu(N))^{\cop}$.

Denote by $1_\epsilon$ the $N\times N$ diagonal matrix with entries
\[
\epsilon_{(0,1]}, \epsilon_{(0,2]},...,\epsilon_{(0,N]}.\]
Then there is a natural generalisation of the previous homomorphism
\begin{Prop}\label{prop: TN embedding}
Let $\epsilon\in\{1,-1,0\}^N$ be in standard form of rank $M$. Then there is an injective $*$-homomorphism
\[
i^{\epsilon}_T:\left(\mcO_q(H(N))/I_{M+1}\right)[Z_{[1]}^{-1},...,Z_{[M]}^{-1}]\rightarrow\mcO_q^\epsilon(T(N)), \qquad Z\mapsto T^*1_\epsilon T,\]
which maps
\begin{equation}
i^{\epsilon}_T:Z_{[k]}\mapsto \prod\limits_{i=1}^{k}\epsilon_{(0,i]}T_{ii}^{2}, \qquad 1\leq k\leq M.
\end{equation}
\end{Prop}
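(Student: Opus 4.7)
The plan is to extend the construction of $i_T$ from the previous proposition, which handles the case $\epsilon=(1,\ldots,1)$, by incorporating the insertion of $1_\epsilon$ and the truncation by $I_{M+1}$. The argument divides into four stages: first construct a $*$-homomorphism $\mcO_q(H(N)) \to \mcO_q^\epsilon(T(N))$ sending $Z \mapsto T^*1_\epsilon T$; second, show it kills $I_{M+1}$; third, compute the images of the leading minors to justify extending to the localization; and fourth, prove injectivity.

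The first step carries the main computational content. The $*$-condition $Z^*=Z$ is automatic since $1_\epsilon^*=1_\epsilon$, so everything comes down to verifying the reflection equation after substitution. I would substitute $Z_{23}=T_{23}^*(1_\epsilon)_2 T_{23}$ into both sides of $\hat{R}_{12}Z_{23}\hat{R}_{12}Z_{23}=Z_{23}\hat{R}_{12}Z_{23}\hat{R}_{12}$, the key task being to simplify the inner cluster $(1_\epsilon)_2 T_{23}\hat{R}_{12}T_{23}^*(1_\epsilon)_2$. Using the mixed relation $T_{23}\hat{R}_{12}T_{23}^*=T_{13}^*\hat{R}_{\epsilon,12}T_{13}$ together with the commutation of $(1_\epsilon)_2$ with $T_{13}$, $T_{13}^*$ (disjoint tensor slots), this reduces to $T_{13}^*(1_\epsilon)_2\hat{R}_{\epsilon,12}(1_\epsilon)_2 T_{13}$. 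The $\epsilon$-twist is then absorbed via the matrix identity
\[
(1_\epsilon)_2\,\hat{R}_{\epsilon,12}\,(1_\epsilon)_2 \;=\; \hat{R}_{12}\,(1_\epsilon)_1(1_\epsilon)_2,
\]
checked directly on basis vectors using the cocycle relation $\epsilon_{(0,l]}\epsilon_{(l,k]}=\epsilon_{(0,k]}$ (both sides vanish automatically when $\epsilon_{(0,l]}=0$). Combining this with relations (1), (2) of $\mcO_q^\epsilon(T(N))$ and the braid-commutation $[\hat{R}_{12},(1_\epsilon)_1(1_\epsilon)_2]=0$ (a diagonal matrix braids with itself), both sides of the reflection equation reduce to the common expression $T_{23}^*T_{13}^*(1_\epsilon)_1(1_\epsilon)_2 T_{13}T_{23}\hat{R}_{12}^2$.

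The second and third stages rest on a Cauchy-Binet-type expansion for the image of $Z_{IJ}$, parallel to the coaction formula \eqref{equation: T coaction q minors}: since $1_\epsilon$ is classically diagonal with entries $\epsilon_{(0,k]}$, one obtains
\[
Z_{IJ}\;\mapsto\;\sum_{K\in\binom{\brN}{|I|}}\Big(\prod_{k\in K}\epsilon_{(0,k]}\Big)\,T^*_{KI}\,T_{KJ}.
\]
For $|I|=|J|=M+1$ every $K$ contains an index $>M$, forcing $\prod_{k\in K}\epsilon_{(0,k]}=0$, so $I_{M+1}$ maps to zero. For $Z_{[k]}$ with $k\le M$, upper/lower triangularity of $T, T^*$ collapses the sum to $K=[k]$, and each of $T_{[k],[k]}$, $T^*_{[k],[k]}$ reduces under the identity permutation (using $T_{ii}^*=T_{ii}$ and the commutation of diagonals) to $T_{11}T_{22}\cdots T_{kk}$. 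This gives $Z_{[k]}\mapsto\prod_{i=1}^k\epsilon_{(0,i]}\,T_{ii}^2$, invertible in the appropriate localisation of $\mcO_q^\epsilon(T(N))$, so the map descends to the stated localised quotient.

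For injectivity, I would use a filtered/graded argument: both sides carry natural filtrations from their FRT-style generators, the map respects them, and the associated graded map can be compared with the classical picture, where the factorization of a rank-$\le M$ Hermitian matrix with non-vanishing leading principal minors as $T^*1_\epsilon T$ (with $T$ upper-triangular and invertible-diagonal) is a bijection onto the corresponding ``big cell''. Classical injectivity then lifts via a PBW-basis comparison. The main obstacle is Step 1 — specifically, identifying and checking the identity $(1_\epsilon)_2\hat{R}_{\epsilon,12}(1_\epsilon)_2=\hat{R}_{12}(1_\epsilon)_1(1_\epsilon)_2$ that allows the $\epsilon$-twist to be absorbed; once this is in hand, the remaining manipulations are a direct adaptation of the $\epsilon=(1,\ldots,1)$ case, and the quantum-minor bookkeeping in Steps 2 and 3 follows with only the extra scalars $\prod_{k\in K}\epsilon_{(0,k]}$ to track.
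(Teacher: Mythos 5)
The paper itself gives no proof of this proposition --- it is presented as the ``natural generalisation'' of the preceding embedding $Z\mapsto T^*T$, which is quoted from \cite{Bau00} and the companion paper \cite{DCMo24a} --- so there is no in-paper argument to compare against. Judged on its own terms, your Step 1 is correct and essentially complete: the identity $(1_\epsilon)_2\hat{R}_{\epsilon,12}(1_\epsilon)_2=\hat{R}_{12}(1_\epsilon)_1(1_\epsilon)_2$ does hold (both sides have $(j,i)$-entry $q^{-\delta_{ij}}\epsilon_{(0,i]}\epsilon_{(0,j]}$ in the permutation part, and $\epsilon_{(i,j]}\epsilon_{(0,i]}^2=\epsilon_{(0,i]}\epsilon_{(0,j]}$ in the lower-triangular part, including the degenerate case $\epsilon_{(0,i]}=0$), and with it, together with $[\hat{R}_{12},(1_\epsilon)_1(1_\epsilon)_2]=0$ and the three defining relations of $\mcO_q^\epsilon(T(N))$, both sides of the reflection equation do collapse to $T_{23}^*T_{13}^*(1_\epsilon)_1(1_\epsilon)_2T_{13}T_{23}\hat{R}_{12}^2$ as you claim.

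There are, however, two genuine gaps. First, your derivation of the expansion $Z_{IJ}\mapsto\sum_K\bigl(\prod_{k\in K}\epsilon_{(0,k]}\bigr)T^*_{KI}T_{KJ}$ ``parallel to \eqref{equation: T coaction q minors}'' is not legitimate as stated: to specialize the coaction formula at $Z=1_\epsilon$ you would need $Z\mapsto 1_\epsilon$ to be a $*$-character of $\mcO_q(H(N))$, and by Theorem \ref{Character classification theorem} it is not one for general $\epsilon$ (diagonal characters have all their nonzero entries equal, and placed after the zeros, whereas $1_\epsilon$ has arbitrary signs followed by zeros). This is precisely why the codomain must be the \emph{twisted} algebra $\mcO_q^\epsilon(T(N))$ rather than $\mcO_q(T(N))$; the Cauchy--Binet identity therefore has to be proved inside $\mcO_q^\epsilon(T(N))$ using its twisted relations (it is available in the literature, e.g.\ \cite{Flo20}, but it does not follow from the coaction formula). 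Second, injectivity --- which is the substantive content of the proposition, being nontrivial already for $\epsilon=(1,\dots,1)$, where it is Baumann's separation-of-variables theorem --- is only gestured at. ``Classical injectivity lifts via a PBW-basis comparison'' would require exhibiting a basis of the localised quotient $\bigl(\mcO_q(H(N))/I_{M+1}\bigr)[Z_{[1]}^{-1},\dots,Z_{[M]}^{-1}]$ and showing its image is linearly independent in $\mcO_q^\epsilon(T(N))$; as written this is a statement of intent rather than a proof, and it is the step that would need the most work to complete.
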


\begin{Def}\label{def: TN highest weight}
Let $V$ be an $\mcO_q^\epsilon(T(N))$ representation. We call $v\in V$ a \emph{weight vector} if it is a joint eigenvector for all $T_{ii}\in\mcO_q^\epsilon(T(N))$. We call $v_{0}\in V$ a \emph{highest weight vector} if it is a weight vector and $T_{ij}^*v_{0}=0$ for all $i<j$. The set of $T_{ii}$ eigenvalues associated to a given (highest) weight vector are called the \emph{(highest) weights}. We call $V$ a \emph{unitarizable highest weight representation} if $V$ is generated by a unique highest weight vector, and $V$ is a pre-Hilbert space with respect to the $\mcO_q^\epsilon(T(N))$ $*$-structure.
\end{Def}
The elements $T_{ii}$ will act on a highest weight vector as
\[
T_{ii}v_{0} = q^{r_{i}}v_{0}, \qquad r\in\R.\]
We will often simplify somewhat and refer to the set $r\in\R^M$ as the highest weight instead of $\{q^{r_{1}},...,q^{r_{M}}\}$.
 
\begin{Def}
For a fixed choice of $\epsilon$ in standard form of rank $M$, we call $r\in\R^M$ $\epsilon$-adapted when for all $1\leq s<t\leq M$ with $\epsilon_{(s,t]}=1$, we have
\begin{equation}
(r_t+t)-(r_s+s)\in\Z_{>0}.
\end{equation} 
\end{Def}
The unitarizable highest weight representations of $\mcO_q^\epsilon(T(N))$ can then be described as follows:
\begin{Theorem}\label{theorem: highest weights of TN}
For a fixed choice of $\epsilon$ of rank $M$, $r\in\R^M$ is the highest weight of a unitarizable highest weight $\mcO_q^\epsilon(T(N))$ representation if and only if $r$ is $\epsilon$-adapted.
\end{Theorem}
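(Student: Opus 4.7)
The plan is to reduce the classification, for each pair $1\le s<t\le M$, to the standard classification of unitarizable highest weight modules over a $U_q(\mfsl_2)$-type subalgebra, whose compact or non-compact form is dictated by $\epsilon_{(s,t]}$.

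First I would establish necessity. Let $V$ be a unitarizable highest weight module with highest weight vector $v_0$ satisfying $T_{ii}v_0=q^{r_i}v_0$ and $T_{ij}^*v_0=0$ for $i<j$. Fixing indices $1\le s<t\le M$, I would extract the $(s,t;t,s)$-component of the mixed relation
\[
T_{23}\hat{R}_{12}T_{23}^* = T_{13}^*\hat{R}_{\epsilon,12}T_{13}
\]
and repeatedly apply $T_{ij}^*v_0=0$ for $i<j$, together with the braid relation $\hat{R}_{12}T_{13}T_{23}=T_{13}T_{23}\hat{R}_{12}$, to control the off-diagonal correction terms. On the cyclic subspace generated by the vectors $(T_{st})^nv_0$ this collapses to a $U_q(\mfsl_2)$-type commutator of the form
\[
[T_{st}^*,T_{st}] \;=\; \frac{q^{-2(t-s)}T_{ss}^{\,2} - \epsilon_{(s,t]}\,T_{tt}^{\,2}}{q^{-1}-q},
\]
modulo operators annihilating $v_0$. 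When $\epsilon_{(s,t]}=1$ this is of $U_q(\mfsu(2))$ type, and the standard Shapovalov/Kac–Peterson argument (the norms $\|(T_{st})^nv_0\|^2$ obey a recursion whose $q$-integer factors must eventually produce a zero) forces $(r_t+t)-(r_s+s)\in\Z_{>0}$. When $\epsilon_{(s,t]}=-1$ the algebra is of $U_q(\mfsu(1,1))$ type and the recursion is strictly positive, imposing no condition.

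For sufficiency, given an $\epsilon$-adapted $r\in\R^M$, I would construct $V$ as the quotient of a Verma-type module. Concretely, I would form the free module with generator $v_0$ over the subalgebra generated by the $T_{ij}^*$ with $i<j$, impose $T_{ii}v_0=q^{r_i}v_0$, and quotient by the radical of the Hermitian form determined by the $*$-structure. Since the Shapovalov form factors, up to triangular change of basis, into a product of rank-two contributions indexed by the pairs $(s,t)$, positivity on the quotient follows from the rank-two analysis above: the $\epsilon$-adapted condition is exactly what ensures all resulting norm factors are nonnegative and only vanish on the radical.

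The hard part will be the rank-two analysis for non-adjacent pairs $t>s+1$. There, $T_{st}$ is not a Chevalley generator but an off-diagonal entry, and the $(s,t;t,s)$-component of the mixed relation involves many terms $T_{kl}^*T_{k'l'}$ with $s\le k<l\le t$ that survive the highest-weight annihilation only after careful use of the braid relation and induction on $t-s$. Tracking these corrections, and showing that they combine into precisely the factor $q^{-2(t-s)}$ above, is the computationally delicate step and is where the offset $+\,(t-s)$ in the definition of $\epsilon$-adapted originates.
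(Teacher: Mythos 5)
This theorem is stated in the paper without proof: it is imported from Part I of the series \cite{DCMo24a}, where the unitarizable highest weight representations of $\mcO_q^\epsilon(T(N))$ and the resulting big cell representations are actually constructed, so there is no in-paper argument to compare yours against, and your proposal has to stand on its own. Its skeleton is reasonable, and for an \emph{adjacent} pair $t=s+1$ the rank-two reduction is sound: there $T_{ss},T_{tt},T_{st},T_{st}^*$ generate a genuine $U_q(\mfsu(2))$- or $U_q(\mfsu(1,1))$-type subalgebra (via $T_{ii}\mapsto K_i^{-1}$, $T_{i,i+1}\mapsto (q^{-1}-q)F_iK_{i+1}^{-1}$), the norm recursion for $(T_{st})^n v_0$ closes, and this correctly produces the $\rho$-shift $r_i\mapsto r_i+i$ and the integrality condition. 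The first genuine gap is the non-adjacent case, which you flag but do not resolve, and which is not a technicality: the constraint attached to a pair $(s,t)$ with $t>s+1$ can be present even when \emph{no} adjacent sub-pair carries any constraint. For instance $\epsilon$ of signature $(+,-,-)$ has $\epsilon_{(1,2]}=\epsilon_{(2,3]}=-1$ but $\epsilon_{(1,3]}=+1$, so the only condition in the theorem involves the non-simple root vector $T_{13}$. In that situation the subspace $\mathrm{span}\{(T_{st})^n v_0\}$ is not invariant under $T_{st}^*$, the correction terms $T_{kl}^*T_{k'l'}$ with $s\le k<l\le t$ do not annihilate $(T_{st})^n v_0$ for $n\ge 1$, and your commutator identity ``modulo operators annihilating $v_0$'' only gives information at $n=1$. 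Necessity in precisely these mixed-signature cases is the real content of the theorem, and it is missing.

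The second gap is in sufficiency. The assertion that the Shapovalov form ``factors, up to triangular change of basis, into a product of rank-two contributions'' is not a theorem, and it fails in the closest classical analogues: for Hermitian real forms such as $\mfu(p,q)$, positivity of the contravariant form on a highest weight module is notoriously \emph{not} determined by the rank-two root subalgebras (this is the entire difficulty of the Enright--Howe--Wallach classification), and the Shapovalov determinant controls only the product of eigenvalues on each weight space, not their signs. So even granting the rank-two necessity analysis, you have not exhibited the representations whose existence the ``if'' direction claims. A workable replacement is to construct them directly --- for example by induction on $N$ via restriction to $\mcO_q^{\epsilon'}(T(N-1))$ together with explicit $\ell^2$-models for the rank-one extension step, or by locating them inside tensor products of $*$-characters with admissible $U_q(\mfu(N))$-modules, in the spirit of Theorem \ref{theorem: construction of irreducibles} of this paper --- and then check that every $\epsilon$-adapted $r$ is realized.
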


Via the map from Proposition \ref{prop: TN embedding}, every unitarizable highest weight $\mcO_q^\epsilon(T(N))$ representation becomes an irreducible $\mcO_q(H(N))$ $*$-representation. We call the representations obtained this way \emph{big cell representations}. We can also give an alternative description of big cell representations in terms of shapes:
\begin{Def}
An irreducible $\mcO_q(H(N))$ representation of rank $M$ is called a \emph{big cell representation} if it has shape given by $\tau=\id$ and $u(i)=\epsilon_{(0,i]}$ for a fixed choice of $\epsilon\in\{1,-1,0\}^N$ in standard form of rank $M$. We sometimes refer to this as a big cell representation of \emph{shape $\epsilon$}.
\end{Def}
Theorem \ref{theorem: highest weights of TN} can then be restated as follows:
\begin{Theorem}\label{theorem: highest weights of big cell reps}
$r\in\R^M$ is the highest weight of a big cell $\mcO_q(H(N))$ of shape $\epsilon$ if and only if $r$ is $\epsilon$-adapted.
\end{Theorem}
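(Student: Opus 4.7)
The plan is to deduce the statement directly from Theorem \ref{theorem: highest weights of TN} by transporting the notion of highest weight along the embedding $i^\epsilon_T$ of Proposition \ref{prop: TN embedding}. For a shape $\epsilon$ in standard form of rank $M$, the permutation is $\tau=\id$ and the associated set is $P=\brM$, so the distinguished $q$-commuting generators reduce to leading principal quantum minors, $Z_{S,k}=Z_{\brk}$ for $1\leq k\leq M$. Under $i^\epsilon_T$ these are mapped to $\prod_{i=1}^{k}\epsilon_{(0,i]}T_{ii}^{2}$, so if $v_{0}$ is an $\mcO_q^\epsilon(T(N))$ highest weight vector with $T_{ii}v_{0}=q^{r_{i}}v_{0}$, then in the pulled-back $\mcO_q(H(N))$ representation one has $Z_{\brk}v_{0}=\prod_{i=1}^{k}\epsilon_{(0,i]}q^{2r_{i}}v_{0}$. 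Under the convention of identifying highest weights with elements of $\R^{M}$, this is precisely $r$.

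First I would check the forward direction. Given a unitarizable highest weight $\mcO_q^\epsilon(T(N))$ representation of weight $r$, pulling back along $i^\epsilon_T$ gives an irreducible representation of $(\mcO_q(H(N))/I_{M+1})[Z_{[1]}^{-1},\ldots,Z_{[M]}^{-1}]$ which, because $I_{M+1}$ is killed and the $Z_{\brk}$ act as non-zero scalars of the correct sign on $v_{0}$, extends to an $\mcO_q(H(N))$ representation of shape $\epsilon$ — i.e.\ a big cell representation. By construction its highest weight is $r$. Theorem \ref{theorem: highest weights of TN} therefore tells us that every $\epsilon$-adapted $r$ arises as the highest weight of some big cell representation.

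For the converse, I would take any big cell representation $\pi$ of shape $\epsilon$. The shape condition ensures $\pi(Z_{\brk})$ is injective and has positive (up to the fixed signs $\prod\epsilon_{(0,i]}$) spectrum, so on the algebraic vectors $\pi$ factors through the localisation $(\mcO_q(H(N))/I_{M+1})[Z_{[1]}^{-1},\ldots,Z_{[M]}^{-1}]$, and hence via $i^\epsilon_T$ extends to an $\mcO_q^\epsilon(T(N))$ representation. An $\mcO_q(H(N))$ highest weight vector is a joint eigenvector of the $Z_{\brk}$, hence of the $T_{ii}^{2}$; using the positivity of the $T_{ii}$ inside the Hopf $*$-algebra one recovers eigenvalues of the $T_{ii}$ themselves, giving the required $r$. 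Theorem \ref{theorem: highest weights of TN} then forces $r$ to be $\epsilon$-adapted.

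The main obstacle will be showing that the highest weight condition for $\mcO_q(H(N))$ developed in Section \ref{section: highest weight theory} matches, under $i^\epsilon_T$, the condition $T_{ij}^{*}v_{0}=0$ for $i<j$ that defines an $\mcO_q^\epsilon(T(N))$ highest weight vector. Concretely, one has to identify the raising operators used in the $\mcO_q(H(N))$ theory with off-diagonal elements of $T^{*}1_{\epsilon}T$ modulo the invertible diagonal, and verify that annihilation by one family is equivalent to annihilation by the other. Once this dictionary is in place the result is immediate from Theorem \ref{theorem: highest weights of TN}.
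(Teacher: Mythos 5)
Your proposal is correct and matches the paper's approach: the paper offers no separate argument, simply presenting this theorem as a restatement of Theorem \ref{theorem: highest weights of TN} via the identification of big cell representations with pullbacks of unitarizable highest weight $\mcO_q^\epsilon(T(N))$ representations along $i^\epsilon_T$, which is exactly the dictionary you set up. The "obstacle" you flag at the end is not actually an issue at this point in the paper, since the notion of highest weight in play here is the $\mcO_q^\epsilon(T(N))$ one from Definition \ref{def: TN highest weight} (the intrinsic $\mcO_q(H(N))$ highest weight theory only appears in the following section).
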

Using a form of Harish-Chandra homomorphism, explicit formulae for the central values of a big cell $\mcO_q(H(N))$ irreducible in terms of the $\mcO_q^\epsilon(T(N))$ highest weights can be given as follows:
\begin{Prop}\label{PropHC}
	Let $V$ be an irreducible $\mcO_q(H(N))$ big cell representation coming from an $\mcO_q^\epsilon(T(N))$ representation. Then with $e_k$ the $k$-th elementary symmetric polynomial in $N$ variables, we have the scalar coefficients of the central elements $\sigma_k$ acting on $V$ are given by
	\begin{equation}\label{EqHC}
		\sigma_k =  e_k(\epsilon_{(0,1]}T_{11}^2,\epsilon_{(0,2]}q^2T_{22}^2,\ldots, \epsilon_{(0,N]}q^{2N-2}T_{NN}^2). 
	\end{equation}
	In particular, for $r$ an $\epsilon$-adapted highest weight, we have
		\begin{equation}\label{EqHC2}
		\sigma_k =  e_k(\epsilon_{(0,1]}q^{2r_{1}},\epsilon_{(0,2]}q^{2r_{2}+2},\ldots, \epsilon_{(0,N]}q^{2r_{N}+2N-2}). 
	\end{equation}
\end{Prop}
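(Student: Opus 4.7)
The plan is to exploit that $\sigma_k$ is central, reducing the computation to an evaluation on a highest weight vector $v_0$, and then to translate everything into $\mcO_q^\epsilon(T(N))$ via the embedding $i_T^\epsilon$ from Proposition \ref{prop: TN embedding}. Since $V$ is irreducible, $\sigma_k$ acts by a scalar, which it is enough to compute on $v_0$; equation \eqref{EqHC2} is then just the specialisation of \eqref{EqHC} via $T_{ii}v_0 = q^{r_i}v_0$, so the goal reduces to establishing \eqref{EqHC}.

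The key computation is that of a principal quantum minor $Z_{II}$ on $v_0$. Applying a quantum Cauchy--Binet identity, obtained by iterating the Laplace expansions \eqref{EqLaplExp1}--\eqref{EqLaplExp4}, to the factorisation $Z = T^* 1_\epsilon T$ yields a formula of the shape
\[
Z_{II} \;=\; \sum_{K} T^*_{KI}\Bigl(\prod_{k \in K}\epsilon_{(0,k]}\Bigr)T_{KI},
\]
where $T_{KI}$ denotes the quantum minor of $T$ on rows $K$ and columns $I$. Since $T$ is upper triangular, every entry appearing in $T^*_{KI}$ is of the form $T_{ba}^*$ with $b \leq a$; but by Definition \ref{def: TN highest weight}, every off-diagonal such $T_{ba}^*$ with $b<a$ annihilates $v_0$. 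Hence only the $K=I$ term survives, and upper triangularity collapses $T_{II}$ to $\prod_{i\in I}T_{ii}$, giving
\[
Z_{II}\,v_0 \;=\; \Bigl(\prod_{i \in I}\epsilon_{(0,i]}T_{ii}^2\Bigr)v_0.
\]

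Next I rewrite the centre formula of Theorem \ref{TheoCentrPol} as a linear combination of principal minors. For each $I \in \binom{\brN}{k}$, the inner sum $\sum_{\sigma \in \Sym(I)} (-q)^{-l(\sigma)}q^{-a(\sigma)} Z_{i_k\sigma(i_k)}\cdots Z_{i_1\sigma(i_1)}$ coincides, up to an explicit $q$-power depending only on $k$, with the REA principal quantum minor $Z_{II}$. Combining this identification with the prefactor $q^{2Nk - 2\wt(I)}$ and the value of $Z_{II}$ on $v_0$ found above, the $I$-contribution becomes $\prod_{j=1}^k\epsilon_{(0,i_j]}q^{2(i_j-1)}T_{i_j i_j}^2$, and summing over $I$ produces $e_k$ evaluated at $\epsilon_{(0,1]}T_{11}^2,\epsilon_{(0,2]}q^2T_{22}^2,\ldots,\epsilon_{(0,N]}q^{2N-2}T_{NN}^2$, as required.

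The main obstacle is the bookkeeping in this second step: the combinatorial formula for $\sigma_k$ weights each $\sigma$ by $l(\sigma)$ and $a(\sigma)$, and identifying the resulting sum with a principal REA quantum minor requires balancing these weights against the anti-symmetrisation that defines the minor. Tracking how the outer $q^{2Nk - 2\wt(I)}$ combines with the collapsed minor to produce exactly the exponent $2(i_j - 1)$ on each $T_{i_j i_j}^2$ then reduces to a direct but delicate $q$-power accounting; once these are reconciled, \eqref{EqHC} follows, and \eqref{EqHC2} is its image on the highest weight vector.
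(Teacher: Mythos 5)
The paper does not actually prove this proposition (it is quoted from the earlier parts of the series as an instance of a Harish--Chandra homomorphism), so I can only assess your argument on its own terms, and it has a genuine gap at its central step. You claim that in the Cauchy--Binet expansion $Z_{II}=\sum_K T^*_{KI}\bigl(\prod_{k\in K}\epsilon_{(0,k]}\bigr)T_{KI}$ only the term $K=I$ survives on $v_0$ because the off-diagonal entries of $T^*_{KI}$ annihilate $v_0$. But in the product $T^*_{KI}T_{KI}v_0$ the \emph{unstarred} minor acts first, and the highest weight condition of Definition \ref{def: TN highest weight} only says $T^*_{ij}v_0=0$ for $i<j$; it says nothing about $T_{ij}v_0$. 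To use it you must normal-order $T^*_{KI}$ past $T_{KI}$, and the commutators contribute Cartan-type correction terms. Indeed $T^*_{KI}T_{KI}$ is a positive operator, so it cannot vanish on $v_0$ merely because $T^*_{KI}v_0=0$. Concretely, for $N=2$ and $\epsilon=(1,1)$ one has $Z_{22}=T_{12}^*T_{12}+T_{22}^2$, and since $T_{12}=(q^{-1}-q)F_1K_2^{-1}$ does not kill a generic highest weight vector, $Z_{22}v_0\neq T_{22}^2v_0$; in fact consistency with the proposition itself ($\sigma_1=q^2Z_{11}+Z_{22}=T_{11}^2+q^2T_{22}^2$ and $Z_{11}=T_{11}^2$) forces $Z_{22}v_0=\bigl((1-q^2)T_{11}^2+q^2T_{22}^2\bigr)v_0$, which contradicts your claimed evaluation.

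The same example shows that the $q$-power bookkeeping you defer to the end cannot be reconciled: the prefactor $q^{2Nk-2\wt(I)}$ from Theorem \ref{TheoCentrPol} \emph{decreases} with $\wt(I)$ (for $N=2$, $k=1$ it weights $Z_{11}$ by $q^2$ and $Z_{22}$ by $1$), whereas the target exponents $2(i_j-1)$ \emph{increase}, and no overall $q$-power depending only on $k$ can reverse this. The discrepancy is exactly accounted for by the cross terms you discarded: it is the lower-order contributions from $T^*_{KI}T_{KI}$ with $K\prec I$ that shift the naive sum $\sum_I q^{2Nk-2\wt(I)}\prod_{i\in I}\epsilon_{(0,i]}T_{ii}^2$ into the elementary symmetric polynomial $e_k(\epsilon_{(0,1]}T_{11}^2,\ldots,\epsilon_{(0,N]}q^{2N-2}T_{NN}^2)$. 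A correct argument along your lines must therefore either work modulo the left ideal generated by the raising part (the genuine Harish--Chandra mechanism, tracking the resulting $\rho$-shift) or compute the normal-ordering corrections explicitly; as written, the proof would produce a wrong answer already for $\sigma_1$ when $N=2$.
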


In \cite{DCMo24a} it was shown that every $\mcO_q(H(N))$ irreducible is \emph{weakly contained} in a big cell representation. In other words, every irreducible factors through the C$^*$-envelope of a big cell representation. As a consequence, the scalar values of the central elements acting on an arbitrary $\mcO_q(H(N))$ irreducible must be equal to the scalar values given in \ref{PropHC} for some big cell representation.
\begin{Def}
Let $\msZ(\mcO_q(H(N)))$ denote the centre of $\mcO_q(H(N))$. Then for $\pi$ a big cell representation on $V$, and any $v\in V$, we call the corresponding map
\[
s_{\pi}:\msZ(\mcO_q(H(N)))\rightarrow\C, \qquad s_{\pi}:\sigma_k\mapsto\C, \qquad s_{\pi}(\sigma_k)v:=\pi(\sigma_k)v\]
a \emph{central $*$-character}.
\end{Def}

\begin{Def}\label{def: central char signature}
If $\pi$ is a rank $M$ irreducible big cell representation of $\mcO_q(H(N))$ of shape $\epsilon$, then we say $\pi$ has \emph{signature}
\[
\zeta(\pi)=\{N_+,N_-,M\}, \qquad N_++N_-+M=N,\]
if the set
\[
\{\epsilon_{(0,1]},\epsilon_{(0,2]},...\epsilon_{(0,N]}\}\]
consists of $N_+$ 1's, $N_-$ -1's, and $M$ 0's. If $\pi$ has central $*$-character $s$, then we say $s$ has signature $\zeta(\pi)$.
\end{Def}

\section{A highest weight theory for irreducibles}\label{section: highest weight theory}

Lemma \ref{LemqCommpi} shows that for an irreducible $\mcO_q(H(N))$-representation of fixed shape, each of the $Z_{S,k}$ will be diagonalizable with spectrum of the form $\{c_{k}q^{i}\}$. It is natural then to ask if we can develop a highest weight theory for irreducibles, using the $Z_{S,k}$ as an analogue of a Cartan subalgebra. One complication we need to overcome is that for general shapes, the commutation relations between the $Z_{s,k}$ and $Z_{ij}$ do not always give an obvious choice of lowering operators. Our aim is to define a choice of highest weight vector for general shapes such that we can use the $\mcO_{q}(U(N))$ coaction to relate our defined highest weight vector to the highest weight vector of a big cell representation. To simplify things for our highest weight theory, from now on we define an \textit{irreducible} $\mcO_q(H(N))$ representation to mean a bounded $*$-representation on a pre-Hilbert space $V$ with no non-trivial invariant subspaces. 

Before we proceed further we will first need to discuss what happens to a given shape when we restrict it to a smaller $\mcO_q(H(N))$. We will denote by $\downarrow V$ the restriction of an $\mcO_q(H(N))$-representation to $\mcO_q(H(N-1))$, and use similar notation for the associated data, e.g. if a representation $(V,\pi)$ has rank $M$, then $\downarrow M$ denotes the rank of $(\downarrow V,\downarrow \pi)$. When restricting to $\mcO_q(H(N-m))$, we use the notation $\downarrow^m V$. Given an $\mcO_{q}(H(N))$ shape, we use $Z_{\downarrow S,k}$, $Z_{\downarrow^{m}S,k}$ to denote the minors corresponding to the shape restricted to $\mcO_{q}(H(N-1))$, $\mcO_{q}(H(N-m))$ respectively.

To describe the restriction of shapes, we need the following lemma:
\begin{Lem}
If $S$ is a shape of rank $M$, and $1\leq k<M$ is such that there is a single $i\in P_{[k]}$ with $\tau(i)\notin P_{[k]}$, then we have
\begin{equation}\label{equation: weight relation 1}
	Z_{P_{[k]}\cup\{\tau(i)\}}Z_{P_{[k]}\setminus\{i\}}=-q^{-2}Z_{S,k}Z_{S,k}^{*}
\end{equation}
\end{Lem}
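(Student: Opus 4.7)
This identity is a quantum Desnanot--Jacobi (``Lewis Carroll'') relation adapted to the reflection equation algebra and interpreted in the quotient $\mcO_q(H(N))/I^\leq_S$. The plan is to derive it by applying Muir's common-submatrix Laplace expansion \eqref{EqMuirBr} (or, if cleaner, a suitable instance of \eqref{EqLaplExp1}--\eqref{EqLaplExp4}) to the product $Z_{P_{[k]}\cup\{\tau(i)\}}\cdot Z_{P_{[k]}\setminus\{i\}}$, viewing the smaller $(k-1)\times(k-1)$ principal minor as the common sub-block of the larger $(k+1)\times(k+1)$ one. The natural parameters are $r=2$ (the two ``extra'' rows and columns being $i$ and $\tau(i)$) and $l=1$, which produces an $\hat{R}$-weighted sum over $P\in\binom{[2]}{1}$ of products of $k\times k$ quantum minors whose row- and column-index sets are built from $P_{[k]}\setminus\{i\}$ together with one of $i$ or $\tau(i)$.

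The second step is to pass to the quotient and discard every summand lying in $I^\leq_S$. The crucial order observation is that $\tau(i)\in P\setminus P_{[k]}$ forces $\tau(i)>p_k$, which yields $P_{[k]}<\tau(P_{[k]})$ lexicographically and so kills $Z_{P_{[k]},P_{[k]}}$ in the quotient; more generally, every rank-$k$ minor $Z_{X,Y}$ with $(Y,X)<(P_{[k]},\tau(P_{[k]}))$ vanishes. Combined with the selection rules \eqref{EqCondR} for $\hat{R}$ and the hypothesis that $i$ is the \emph{unique} element of $P_{[k]}$ with $\tau(i)\notin P_{[k]}$, this should eliminate every term except those with row/column pair $(\tau(P_{[k]}),P_{[k]})$ and its $*$-partner $(P_{[k]},\tau(P_{[k]}))$, leaving $Z_{S,k}Z_{S,k}^*$. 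The scalar $-q^{-2}$ is then read off from the surviving $(-q)^{\wt(P)-\wt(K)}$ weight combined with two diagonal values of the form $\hat{R}^{II}_{I'I'}=q^{-|I\cap I'|}$ or their inverses.

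The main obstacle I expect is step two when the distinguished index $j\in P_{[k]}$ characterised by $\tau(P_{[k]})=(P_{[k]}\setminus\{j\})\cup\{\tau(i)\}$ does \emph{not} coincide with $i$. In that case the intermediate set $I_1=(P_{[k]}\setminus\{i\})\cup\{\tau(i)\}$ differs from $\tau(P_{[k]})$, and the classical Jacobi cross term $Z_{I_1,P_{[k]}}Z_{P_{[k]},I_1}$ cannot be identified with $Z_{S,k}Z_{S,k}^*$ directly; the missing contribution must come from the additional $\hat{R}$-structure summands that the Muir identity produces beyond the classical identity. One has to track, case-by-case in the relative order of $i$ and $j$ inside $P_{[k]}$, which of these cross terms fall strictly below $(P_{[k]},\tau(P_{[k]}))$ in lex order and are killed by $I^\leq_S$, and which need to be rearranged using the quasi-commutation \eqref{EqqCommQuot} together with the $*$-involution before they assemble into the target $Z_{S,k}Z_{S,k}^*$. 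Once that case analysis is under control, extracting the exact coefficient $-q^{-2}$ reduces to a contained $\hat{R}$-combinatorial bookkeeping.
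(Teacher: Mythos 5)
Your proposal follows essentially the same route as the paper: the paper applies Muir's expansion \eqref{EqMuirBr} with exactly your parameters ($I=J=P_{[k]}\cup\{\tau(i)\}$, common block $I_F=P_{[k]}\setminus\{i\}$, $I^F=\{i,\tau(i)\}$, $K=K'=\{2\}$), kills all but one term on each side using $\tau(i)>p_k$, the selection rules \eqref{EqCondR}, and the lex-order vanishing in $\mcO_q(H(N))/I^\leq_S$, and reads off $-q^{-2}$ from the surviving $P=\{1\}$ weight and the diagonal $\hat{R}$ values. The "obstacle" you flag (the set $(P_{[k]}\setminus\{i\})\cup\{\tau(i)\}$ possibly differing from $\tau(P_{[k]})$) is dispatched in the paper by the same lex-order argument rather than a separate case analysis, so no new idea is needed.
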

\begin{proof}
We first note that we will have $\tau(i)>j$ for all $j\in P_{[k]}$. Let $I=J=P_{[k]}\cup\{\tau(i)\}$, $F=G$ such that $I_F=P_{[k]}\setminus\{i\}$ and $I^F=\{i,\tau(i)\}$, and $K=K'=2$. Then consider equation \eqref{EqMuirBr}. From the conditions in \eqref{EqCondR}, the left hand side reduces to
\[
\sum\limits_{L}(\hat{R}^{-1})^{I,I}_{I_F,I_F}\hat{R}^{I,L\cup\{i,\tau(i)\}}_{P_{[k]}\setminus \{i\},L}Z_{I,L\cup\{i,\tau(i)\}}Z_{L,P_{[k]}\setminus \{i\}}.\]
The set $L$ must satisfy $L\preceq P_{[k]}\setminus \{i\}$, however we want to show that the only possibility is equality. Consider first the case $i=p_k$. Then we would have $P_{[k]}\setminus\{i\}=P_{[k-1]}$, and so if $L\prec P_{[k]}\setminus\{i\}$, we get
\[
(L,P_{[k]}\setminus\{i\})<(P_{[k-1]},\tau(P_{[k-1]})),\]
which gives zero. If $i<p_k$ and $L\prec P_{[k]\setminus\{i\}}$, then we would have $L\cup\{i\}\prec P_{[k]}$, and in particular $L\cup\{i,\tau(i)\}<P_{[k+1]}$. Hence we get
\[
(L\cup\{i,\tau(i)\},P_{[k]}\cup\{\tau(i)\})<(P_{[k+1]},\tau(P_{[k+1]})),\]
which again gives zero. Therefore the only possibility is $L=P_{[k]}\setminus\{i\}$, and so the left hand side reduces to
\[
Z_{P_{[k]}\cup\{\tau(i)\}}Z_{P_{[k]}\setminus \{i\}}.\] 
For the right hand side of equation \eqref{EqMuirBr}, again using the conditions in \eqref{EqCondR}, we see it reduces to
\[
\sum\limits_{P}(-q)^{\wt(P)-\wt(K)}(\hat{R}^{-1})^{\tau(P_{[k]}),\tau(P_{[k]})}_{P_{[k]},P_{[k]}}\hat{R}^{(P_{[k]}\setminus\{i\})\cup\{i,\tau(i)\}_P,C}_{P_{[k]},P_{[k]}}Z_{\tau(P_{[k]}),C}Z_{P_{[k]},(P_{[k]}\setminus\{i\})\cup\{i,\tau(i)\}^P},\]
with $C=(P_{[k]}\setminus\{i\})\cup\{i,\tau(i)\}_P$. The only non-zero choice is $P=1$, and so simplifying we get
\[
-q^{-2}Z_{\tau(P_{[k]}),P_{[k]}}Z_{P_{[k]},\tau(P_{[k]})}.\]
\end{proof}

\begin{Prop}\label{proposition: restriction of shapes}
	Given an $\mcO_q(H(N))$-representation $V$ of shape $S= (S_{ij})_{1\leq i,j\leq N}\simeq(\tau,u)$, the shape $\downarrow S$ of $\downarrow V$ is well-defined, and given by $\downarrow S = (S_{ij})_{1\leq i,j\leq N-1}$. Further, the permutation $\tau$ must be an involution. 	
\end{Prop}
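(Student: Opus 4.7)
The plan is to address the two assertions in tandem, using the quantum-minor identity from the preceding lemma as the central tool.

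I would first set up the restriction by identifying $\mcO_q(H(N-1))$ with the sub-$*$-algebra of $\mcO_q(H(N))$ generated by $\{Z_{ij}\}_{1\le i,j\le N-1}$, and checking, via induction on the size of the subsets using \eqref{EqLaplExp1}--\eqref{EqLaplExp4}, that for $I, J \subseteq [N-1]$ the quantum minors $Z_{I,J}$ computed in the two algebras agree. The three shape conditions for $\downarrow V$ with candidate $\downarrow S$ (the top-left $(N-1)\times(N-1)$ submatrix of $S$) --- containment of $I^{\le}_{\downarrow S}$ in the kernel, triviality of the kernels of the $Z_{\downarrow S, k}$, and positivity --- then translate directly from the corresponding conditions on $V$. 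The remaining subtlety is whether $\downarrow S$ is itself a valid shape matrix in the sense of Definition \ref{def:shape}; a direct check (comparing zero rows and columns of the restricted block) shows this is equivalent to $\tau^2(N)=N$, so the full involution property is what is needed.

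The main obstacle is therefore proving that $\tau$ is an involution. My strategy is to exploit the identity
\[
Z_{P_{[k]}\cup\{\tau(i)\}}\, Z_{P_{[k]}\setminus\{i\}} \;=\; -q^{-2}\, Z_{S,k}\, Z_{S,k}^{*}
\]
in tandem with the lex-ordering criterion defining membership in $I^{\le}_S$. As a preliminary, I would establish $\tau(P) = P$: were some $i \in P$ to satisfy $\tau(i) \notin P$, then a suitable boundary variant of this identity (obtained by a Laplace manipulation similar to the proof of the preceding lemma but pushed up to rank $M+1$) forces the left-hand side into $I_{M+1} \subseteq \ker\pi$, contradicting non-vanishing of $Z_{S,k}Z_{S,k}^{*}$.

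Then, assuming $\tau|_P$ is a permutation of $P$ that contains a cycle of length $\ge 3$, I would select the smallest index $k$ such that $p_k$ lies in such a cycle with $\tau(p_k) > p_k$; by minimality, $\tau|_{P_{[k-1]}}$ is an involution of $P_{[k-1]}$ and $p_k$ is the unique element of $P_{[k]}$ whose $\tau$-image escapes $P_{[k]}$. Applying the identity with $i = p_k$, the lex-ordering test should, after a case split on whether $\tau(p_k) = p_{k+1}$ or $\tau(p_k) > p_{k+1}$ together with a careful analysis of $\tau(P_{[k+1]})$, force one of the two factors on the left-hand side into the shape ideal $I^{\le}_S$, producing the required contradiction. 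Matching up the ordering inequalities across these sub-cases --- especially when the long cycle interacts nontrivially with elements outside it, so that the immediate choice of $(k,i)$ does not instantly yield the contradiction --- is the most delicate part of the argument, and is where I anticipate the main technical difficulty; it may require iterating the choice of $k$ or combining several applications of the identity.
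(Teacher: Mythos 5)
Your high-level structure (first control the restriction, then attack the involution) matches the paper's, and you correctly identify the identity $Z_{P_{[k]}\cup\{\tau(i)\}}Z_{P_{[k]}\setminus\{i\}}=-q^{-2}Z_{S,k}Z_{S,k}^{*}$ as a key tool. But there are genuine gaps in both halves. For the restriction, the shape conditions do \emph{not} ``translate directly'': when $\tau(a)=N$ with $a<N$, the minor $Z_{S,M-1}=Z_{\tau(P_{[M-1]}),P_{[M-1]}}$ involves the index $N$ and is not an element of $\mcO_q(H(N-1))$ at all; the rank drops by $2$, and one must prove that the new lex-minimal nonvanishing minors are exactly $Z_{P_{[M-1]}\setminus\{a\}}$ and its subordinates, that no $j$ with $u(j)=0$ acquires $\downarrow u(j)\neq 0$, and that $\downarrow\tau$, $\downarrow u$ are unchanged away from $a$. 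These are the substantive Muir-identity computations that occupy most of the paper's proof, and your plan does not supply them. (Your preliminary $\tau(P)=P$, by contrast, needs no Laplace manipulation: if $\tau(i)\notin P$ then $u_{\tau(i)}=0$ forces $\tau(\tau(i))=\tau(i)$, and injectivity gives $\tau(i)=i$, so $i\notin P$.)

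For the involution, your concrete claims fail. Take $\tau=(1\,5)(2\,3\,4)$ with all $u_i\neq 0$: the smallest index $k$ with $p_k$ in a long cycle is $k=2$, but $\tau|_{P_{[1]}}$ is not an involution of $P_{[1]}$ (since $\tau(1)=5$), and \emph{two} elements of $P_{[2]}=\{1,2\}$ have $\tau$-images escaping $P_{[2]}$, so the identity — which requires a unique escaping element — does not apply. Even when it does apply, $Z_{P_{[k]}\setminus\{i\}}$ lies in $I^{\le}_S$ only when $i=p_k$ and $P_{[k-1]}<\tau(P_{[k-1]})$, so the contradiction you hope for is not automatic. You flag this as the delicate point but do not resolve it, and it is not clear the direct combinatorial attack closes. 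The paper's route is different and cleaner: having established that restriction to $\mcO_q(H(N-1))$ sends $a\mapsto$ a fixed point with $\downarrow u(a)=0$ while leaving $\downarrow\tau(j)=\tau(j)$ for all other $j<N$, one observes that if $N$ sits in a cycle of length $\ge 3$ (taking $N$ to be the largest element of that cycle), then $b=\tau(N)$ has no preimage under $\downarrow\tau$, so $\downarrow\tau$ fails to be a permutation — contradicting the unique-shape theorem applied to $\downarrow V$. I would redirect your effort toward proving the two restriction properties rigorously and then extracting the involution as this corollary, rather than pursuing the direct minor-by-minor contradiction.
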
	

\begin{proof}
	The restriction of the shape only depends on the restriction of diagonal and zero operators, so can be seen to be well-defined. If $\tau(N)=N$, then the result concerning restriction follows immediately from the definition of shape. From now on, we assume $\tau(a)=N,\tau(N)=b$, where $a$ and $b$ may be distinct or equal. Further, let $a=p_{k}$, and assume the shape is of rank $M$. Finally we assume that we are considering a fixed choice of irreducible appearing in $\downarrow V$. We will see that our result does not depend on the choice of irreducible, so must hold for all irreducibles in the decomposition of $V$. To prove the general result, we will first show the following two properties:
\begin{enumerate}
	\item Rank $\downarrow V = M-2$, with $\downarrow P_{[M-2]} = P_{[M-1]}\setminus\{a\}$.
	\item $\downarrow \tau(i)=\tau(i)$, $\downarrow u(i)=u(i)$ for all $a\neq i<N$.
\end{enumerate}

	We start by noting that we must have
	\[
	Z_{\downarrow S,j} = Z_{S,j} \text{ for } j<k.\]
	To prove the first property, we first want to show that if $u(j)=0$ then we must have $\downarrow u(j)=0$, and so $\downarrow P\subseteq P_{[M]}\setminus N$. To see this, assume otherwise, and let 
	\[
	U=\{1\leq j< N\mid u(j)=0\text{ and }\downarrow u(j)\neq 0\}, \qquad Q\subset P_{[M]}\setminus N\text{ s.t. }Z_{Q\cup U,Q\cup U}=Z_{\downarrow S,\downarrow M}\neq 0,\]
	\[
	I=J=P_{[M]}\cup U, \qquad G=F\text{ s.t. }I_F=Q, \qquad K'=K\text{ s.t. }(I^F)_K=U.\]
	Then substituting into equation \eqref{EqMuirBr}, we get
	\[
	0 = \sum\limits_{P}(-q)^{\wt(P)-\wt(K)}\hat{R}^{Q\cup(I^F)_P,Q\cup(I^F)_P}_{P_{[M]},P_{[M]}}Z_{Q\cup U,Q\cup(I^F)_P}Z_{P_{[M]},Q\cup(I^F)^P},\]
	which by Lemma 3.16 of \cite{DCMo24b}, reduces to
	\[
	Z_{Q\cup U,Q\cup U}Z_{P_{[M]},P_{[M]}}=0,\]
	which is a contradiction. Therefore if $u(j)=0$, we must have $\downarrow u(j)=0$.
	
	Next, we want to show that $\downarrow S$ has rank $M-2$. The only possible rank $M-1$ minor in $\mcO_q(H(N-1))$ that can be non-zero is $Z_{P_{[M-1]},P_{[M-1]}}$. However we have
	\[
	Z_{P_{[M-1]},P_{[M-1]}} < Z_{\tau(P_{[M-1]}),P_{[M-1]}} = Z_{S,M-1},\]
	and so $Z_{P_{[M-1]},P_{[M-1]}}=0$. Therefore $\downarrow S$ can be at most rank $M-2$. To see that it is indeed rank $M-2$, we can apply equation \eqref{equation: weight relation 1} to $Z_{S,M-1}Z_{S,M-1}^{*}$ to get that 
	\[
	Z_{P_{[M-1]}\setminus \{a\}}\neq 0.\]
	To see that this is the smallest non-zero minor that will appear, we note that if there were some $x>a$ such that
	\[
	Z_{P_{[M-1]}\setminus\{x\}}\neq 0 \qquad \text{and} \qquad (P_{[M-1]}\setminus\{x\})<(P_{[M-1]}\setminus\{a\}),\]
	then it would require either $\downarrow\tau(a)=a$ or $\downarrow\tau(a)=y<N$. Either case would give
	\[
	(\downarrow P_{[k]},\downarrow\tau(P_{[k]}))<(P_{[k]},\tau(P_{[k]})),\]
	which is a contradiction, and therefore $Z_{P_{[M-1]}\setminus\{a\},P_{[M-1]}\setminus\{a\}}$ is the minimal rank $M-2$ minor in our chosen irreducible in $\downarrow V$.
	
	Hence we have shown $\downarrow S$ has rank $M-2$, and that $\downarrow u(a)=0$. We now need to show that all other $\downarrow\tau$ and $\downarrow u$ stay the same. If $\downarrow\tau(j)\neq\tau(j)$ for some $a\neq j<N$, then there would be a rank $k\leq m<M-2$ minor $Z_{Q,P_{[m+1]}\setminus \{a\}}$, with $Q\subset P_{[M-1]}\setminus \{a\}$, such that
	\[
	Z_{S,m} < Z_{Q,P_{[m+1]}\setminus \{a\}} < Z_{\tau(P_{[m+1]}\setminus \{a\}),P_{[m+1]}\setminus \{a\}}.\]
	To show that this does not occur, we use equation \eqref{EqMuirBr} with 
	\[
	I = P_{[m+1]}, \qquad J = Q\cup N, \qquad I_{F} = P_{[k-1]}, \qquad J_{G} = \tau(P_{[k-1]}), \qquad K = K^{\prime} = \{2,...,m-k+2\}.\]
	It can be seen that the only non-zero choice in the right hand side is given by $P=\{1,...,m-k+1\}$, and so the equation simplifies to give
	\[
	Z_{P_{[m+1]},Q\cup N}Z_{S,k-1}^{*} = (-q)^{k-m-1}Z_{P_{[m+1]}\setminus \{a\},Q}Z_{S,k}^{*}.\]
	If $Q<\tau(P_{[m+1]}\setminus \{a\})$, then $Q\cup N<\tau(P_{[m+1]})$, which would give zero on the left hand side. Hence no such $Q$ exists, and we must have $\downarrow\tau(j)=\tau(j)$ for all $a\neq j<N$. If we replace $Q$ with $\tau(P_{[m+1]}\setminus \{a\})$ in the above equation, we get
	\begin{equation}
		Z_{S,m+1}Z_{S,k-1} = (-q)^{k-m-1}Z_{\downarrow S,m}Z_{S,k}
	\end{equation}
	Then considering the unimodular part of each term, we see we must also have $\downarrow u(j)=u(j)$ for all $a\neq j<N$.
	
	Hence we have shown the two properties that we wanted, so now we can proceed to showing that $\tau$ must be an involution. Recall the definition of shape in \ref{def:shape}, and Theorem 3.6 of \cite{DCMo24b}, which states that $\tau$ must be a permutation. Now consider the case where $\tau$ is not an involution, so must contain a cycle of length $\geq 3$. Let $N$ be the largest element of this cycle, with $\tau(a)=N$, $\tau(N)=b$, and consider the restriction to $\mcO_q(H(N-1))$. From the first property above, we have $\downarrow u(a)=0$, $\downarrow\tau(a)=a$, whilst the second property gives $\downarrow\tau(j)=\tau(j)$ for all $a\neq j<N$. However, as $N\notin\downarrow \tau$ and $\downarrow\tau(a)=a$, we see that there is no element $j$ such that $\downarrow\tau(j)=b$. It follows that $\downarrow\tau$ is not a permutation if $\tau$ contains a cycle of length $\geq 3$, and therefore $\tau$ must be an involution.
	
	Finally, considering properties $1$ and $2$, along with $\tau$ being an involution, we see that we have also shown that the shape of our chosen irreducible in $\downarrow V$ is given by $\downarrow S=(S_{ij})_{1\leq i,j\leq N-1}$. As determination of the shape of the chosen irreducible in $\downarrow V$ did not depend on the choice of irreducible, it follows that each irreducible component of $\downarrow V$ must have the same shape, given by $\downarrow S$.
\end{proof}

\begin{Cor}\label{cor: self adjoint}
	The shape matrices are self-adjoint.
\end{Cor}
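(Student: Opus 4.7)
The plan is to induct on $N$. The base case $N=1$ follows from self-adjointness of $Z_{11}$ and the positivity condition on the shape: $\pi(Z_{11})^* = \pi(Z_{11})$ together with $\bar{u}_1 \pi(Z_{11}) \geq 0$ forces $u_1 = \bar{u}_1$, so $u_1 \in \{0,\pm 1\}$.

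For the inductive step, let $V$ be an $\mcO_q(H(N))$-irreducible of shape $S=(\tau,u)$ and rank $M$. Proposition \ref{proposition: restriction of shapes} gives both that $\tau$ is an involution and that $\downarrow V$ has shape $\downarrow S = (S_{ij})_{1\leq i,j\leq N-1}$, which is self-adjoint by the inductive hypothesis. It therefore only remains to verify self-adjointness of the $N$-th row and column of $S$. If $u_N=0$ these vanish automatically, so assume $u_N \neq 0$, whence $N = p_M \in P$. Two subcases remain: (A) $\tau(N)=N$, where I must show $u_N \in \{\pm 1\}$; and (B) $\tau(N)=a \neq N$ (so $a,N \in P$ with $\tau(a)=N$), where I must show $u_N = \bar{u}_a$.

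The main input for both subcases is that, since $\tau$ is an involution on $P$, one has $\tau(P)=P$ as sets and hence $Z_{S,M} = Z_{\tau(P),P} = Z_{P,P}$ is a principal quantum minor, which is self-adjoint in $\mcO_q(H(N))$ as a consequence of $Z^*=Z$ (cf.~\cite{DCMo24a}). Combining $\pi(Z_{P,P})^* = \pi(Z_{P,P})$ with the shape positivity condition $(-1)^{l(\tau|_P)}\bar{u}_P\pi(Z_{P,P}) \geq 0$ forces $u_P \in \{\pm 1\}$. The inductive hypothesis then gives $u_{\downarrow P} \in \{\pm 1\}$, because each fixed point in $\downarrow P$ contributes a factor in $\{\pm 1\}$ and each $\tau$-paired pair in $\downarrow P$ contributes $u_i\bar{u}_i = 1$. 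In case (A), $\downarrow P = P\setminus\{N\}$, and $u_P = u_N u_{\downarrow P}$ immediately gives $u_N \in \{\pm 1\}$, finishing that case.

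In case (B), $\downarrow P = P\setminus\{a,N\}$, and the same reasoning gives only $u_N u_a \in \{\pm 1\}$; the sign must be pinned down. For this I will apply \eqref{equation: weight relation 1} with $k = M-1$ and $i = a$ (valid because $a$ is the unique element of $P_{[M-1]}$ whose $\tau$-image lies outside $P_{[M-1]}$), obtaining
\begin{equation*}
Z_{P,P}\cdot Z_{\downarrow P,\downarrow P} = -q^{-2} Z_{S,M-1} Z_{S,M-1}^*.
\end{equation*}
Substituting the polar decompositions $\pi(Z_{S,k}) = (-1)^{l(\tau|_{P_{[k]}})} u_{P_{[k]}} T_k$ coming from the positivity conditions, with $T_k$ positive and nonzero (and analogously for $Z_{\downarrow P,\downarrow P}$), and using Lemma \ref{LemqCommpi} to conclude that the positive parts commute, the identity reduces to matching a $\pm 1$ scalar against $-1$. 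The parity computation $(-1)^{l(\tau|_P) + l(\tau|_{\downarrow P})} = -1$ (since $\tau|_P$ differs from $\tau|_{\downarrow P}$ by the single $2$-cycle $(a,N)$), together with $u_{\downarrow P}^2 = 1$, forces $u_N u_a = 1$, i.e.~$u_N = \bar{u}_a$. The principal obstacle is the invocation of self-adjointness of the principal minor $Z_{P,P}$, which must be drawn from the algebraic setup in \cite{DCMo24a}.
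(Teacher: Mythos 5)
Your proof is correct and follows essentially the same route as the paper's: Proposition \ref{proposition: restriction of shapes} for the involution property, restriction/induction together with self-adjointness of principal minors for the fixed-point case, and equation \eqref{equation: weight relation 1} with the single-transposition parity argument to pin down $u_N u_a=1$. The only real difference is that you make the paper's terse ``compare unimodular parts'' step explicit via polar decompositions of the $Z_{S,k}$ and commuting positive injective operators, which is added rigor rather than a different idea.
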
	
\begin{proof}
	We have already established in Proposition \ref{proposition: restriction of shapes} that $\tau$ must be an involution, and so only need to show that $u_{\tau(i)}=\bar{u}_{i}$. For any $i$ with $\tau(i)=i$, it follows by restriction to an appropriate shape that $u_{i}$ must be real. Hence we just need to show that $u_{i}=\bar{u}_{\tau(i)}$ for $i\neq\tau(i)$. Consider equation \eqref{equation: weight relation 1}, and the corresponding unimodular part. we have:
	\[
	(-1)^{l(\tau_{\lvert P_{[k]}\cup\tau(i)})}(-1)^{l(\tau_{\lvert P_{[k]}\setminus\{i\}})}\prod\limits_{j\in P_{[k]}\cup\tau(i)}\bar{u}_{j}\prod\limits_{j\in P_{[k]}\setminus\{i\}}\bar{u}_{j} = -1.\]
	The length function has parity dependent on the number of generators in the permutation, and as the two permutations appearing differ by a single transposition, the two terms must combine to give $-1$. Further, by induction we can take all $u_{j}$ for $j\in P_{[k]}\setminus\{i\}$ to either be real or to cancel with $u_{\tau(j)}$. Hence we can simplify to give
	\[
	\bar{u}_{i}\bar{u}_{\tau(i)}=1.\]
\end{proof}
From now on, we will always assume that the shapes we are considering are self-adjoint.

For the highest weight theory, it will be useful to have a generalization of Lemma \ref{LemqCommpi}: 
\begin{Lem}
Given an $\mcO_q(H(N))$ shape $S$ of rank $M$, write
\[
Z_{\downarrow^{m}S,k} = Z_{\downarrow^{m}\tau(P_{[k]}),\downarrow^{m}P_{[k]}}.\]
Then in $\mcO_q(H(N))/I^{\leq}_S$, we have the following commutation relations:
\begin{equation}\label{equation: restricted ZSk comm rels}
Z_{\downarrow^{m}S,k}Z_{I,J} = q^{\lvert I\cap \downarrow^{m}P_{[k]}\rvert+\lvert I\cap\downarrow^{m}\tau(P_{[k]})\rvert-\lvert J\cap \downarrow^{m}P_{[k]}\rvert-\lvert J\cap\downarrow^{m}\tau(P_{[k]})\rvert}Z_{I,J}Z_{\downarrow^{m}S,k}.
\end{equation}
\end{Lem}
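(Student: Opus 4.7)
The plan is to argue by induction on $m$, with base case $m=0$ being Lemma \ref{LemqCommpi}. For the inductive step, Proposition \ref{proposition: restriction of shapes} distinguishes cases. If the last restriction step does not alter the first $k$ elements of the associated set, i.e.\ $\downarrow^m P_{[k]} = \downarrow^{m-1} P_{[k]}$, then $Z_{\downarrow^m S, k} = Z_{\downarrow^{m-1} S, k}$ and the conclusion follows directly from the inductive hypothesis. The essential case is when the final restriction removes an entangled pair $\{a, N-m+1\}$ from $\downarrow^{m-1} P$ with $a = p_{k_0}$ lying inside $\downarrow^{m-1} P_{[k+1]}$.

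In that essential case I would use the Muir-Laplace identity
\[
Z_{\downarrow^{m-1} S,\, k+1}\, Z_{\downarrow^{m-1} S,\, k_0-1} \,\equiv\, (-q)^{k_0-k-1}\, Z_{\downarrow^m S,\, k}\, Z_{\downarrow^{m-1} S,\, k_0} \pmod{I^{\leq}_S},
\]
which is the iterated-restriction analogue of the identity $Z_{S,m+1}Z_{S,k-1} = (-q)^{k-m-1}Z_{\downarrow S,m}Z_{S,k}$ derived inside the proof of Proposition \ref{proposition: restriction of shapes}; the same Muir-expansion argument goes through in $\mcO_q(H(N-m+1))$ viewed as a subalgebra of $\mcO_q(H(N))$. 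The inductive hypothesis supplies an explicit $q$-commutation between each of $Z_{\downarrow^{m-1} S,\, k+1}$, $Z_{\downarrow^{m-1} S,\, k_0-1}$, $Z_{\downarrow^{m-1} S,\, k_0}$ and $Z_{I,J}$. Pushing $Z_{I,J}$ across both sides of the identity by means of these relations and rearranging produces a relation of the form
\[
\bigl( q^{\alpha}\, Z_{\downarrow^m S, k}\, Z_{I, J} \,-\, q^{\beta}\, Z_{I, J}\, Z_{\downarrow^m S, k} \bigr)\, Z_{\downarrow^{m-1} S,\, k_0} \,\equiv\, 0 \pmod{I^{\leq}_S}
\]
for exponents $\alpha,\beta$ assembled from the inductive $q$-powers. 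A short combinatorial check, using the set-theoretic identities $\downarrow^m P_{[k]} = \downarrow^{m-1} P_{[k+1]} \setminus \{p_{k_0}\}$ and $\downarrow^m \tau(P_{[k]}) = \downarrow^{m-1} \tau(P_{[k+1]}) \setminus \{N-m+1\}$ (valid because $N-m+1 = \tau(p_{k_0})$ is the largest element of $\downarrow^{m-1} P$ and so lies strictly beyond its first $k+1$ elements), verifies that $\beta - \alpha$ coincides with the exponent claimed in the statement.

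The main obstacle I anticipate is the cancellation of $Z_{\downarrow^{m-1} S, k_0}$ needed to extract the desired commutation. On any bounded $*$-representation $\pi$ of shape $S$, the operator $\pi(Z_{\downarrow^{m-1} S, k_0})$ is injective (by the defining properties of shape inherited through restriction via Proposition \ref{proposition: restriction of shapes}); combined with self-adjointness of the ambient algebra its adjoint is also injective, hence its image is dense, and cancellation is valid on $\pi$. To lift the representation-level conclusion back into the quotient $\mcO_q(H(N))/I^{\leq}_S$, I would invoke the Type I property established in \cite{DCMo24a}, which ensures that the irreducible shape-$S$ representations jointly separate the points of the quotient; hence the relation holds in $\mcO_q(H(N))/I^{\leq}_S$ as stated.
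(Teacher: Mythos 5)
Your route is genuinely different from the paper's. The paper proves the lemma in one step: it specializes the general commutation relation \eqref{EqGenCommRel} to $I=\downarrow^{m}P_{[k]}$, $J=\downarrow^{m}\tau(P_{[k]})$, notes via \eqref{EqCondR} that every surviving index pair $(K,L)$ satisfies $K\preceq \downarrow^{m}\tau(P_{[k]})$ and $L\preceq\downarrow^{m}P_{[k]}$, hence $K,L\subseteq[N-m]$, and then uses the restriction-of-shapes result (Proposition \ref{proposition: restriction of shapes}) to conclude that the only such rank-$k$ minor not killed in the quotient is $Z_{\downarrow^{m}S,k}$ itself; the diagonal $\hat{R}$-coefficients then produce exactly the stated $q$-power. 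No induction on $m$ and no cancellation is needed. Your combinatorial bookkeeping in the inductive step is actually consistent: the identities $\downarrow^{m}P_{[k]}=\downarrow^{m-1}P_{[k+1]}\setminus\{p_{k_0}\}$ and $\downarrow^{m}\tau(P_{[k]})=\downarrow^{m-1}\tau(P_{[k+1]})\setminus\{\tau(p_{k_0})\}$ do make $\beta-\alpha$ equal to the claimed exponent. The problem lies elsewhere.

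The gap is the cancellation step. To strip $Z_{\downarrow^{m-1}S,k_0}$ from the right of your relation you must know it is not a right zero-divisor in $\mcO_q(H(N))/I^{\leq}_S$, and nothing in this series establishes that. Your workaround --- verify the relation in bounded $*$-representations of shape $S$, where the operator is injective with dense range, and then lift back --- fails at the lifting. What you need is that the element $q^{\alpha}Z_{\downarrow^{m}S,k}Z_{I,J}-q^{\beta}Z_{I,J}Z_{\downarrow^{m}S,k}$, annihilated by every shape-$S$ irreducible, already lies in $I^{\leq}_S$; equivalently, that $\bigcap_{\pi\ \mathrm{of\ shape}\ S}\Ker\pi=I^{\leq}_S$. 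The Type I property of \cite{DCMo24a} concerns the C$^*$-envelope and gives nothing of the sort: the common kernel of the shape-$S$ representations is a $*$-ideal containing $I^{\leq}_S$ but is nowhere shown to equal it, and establishing identities of precisely this kind is what the lemma is for, so the argument risks circularity. A secondary soft spot is the assertion that the identity $Z_{S,m+1}Z_{S,k-1}=(-q)^{k-m-1}Z_{\downarrow S,m}Z_{S,k}$ from the proof of Proposition \ref{proposition: restriction of shapes} iterates to arbitrary restriction depth; this is plausible but would have to be rederived inside the subalgebra generated by the $Z_{ij}$ with $i,j\leq N-m+1$, with the relevant vanishing statements checked modulo $I^{\leq}_S$ rather than modulo the restricted ideal. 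The direct argument via \eqref{EqGenCommRel}, exactly parallel to the proof of Lemma \ref{LemqCommpi}, avoids all of this.
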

\begin{proof}
Assume we have some $Z_{\downarrow^{m}S,k}\neq Z_{S,k}$, then considering the left hand side of equation \eqref{EqGenCommRel} with $I=\downarrow^{m}P_{[k]}$, $J=\downarrow^{m}\tau(P_{[k]})$, we have
\[
\sum\limits_{K,L,L^{\prime}}\left(\sum\limits_{P^{\prime}}\hat{R}^{P^{\prime}I^{\prime}}_{\downarrow^{m}\tau(P_{[k]}),K}\hat{R}^{\downarrow^{m}P_{[k]}, L}_{P^{\prime}L^{\prime}}\right)Z_{KL}Z_{L^{\prime},J^{\prime}}\]
The sum is over $L$ and $K$ such that 
\[
L\preceq\left(\downarrow^{m}P_{[k]}\right), \qquad K\preceq\left(\downarrow^{m}\tau(P_{[k]})\right),\]
where $\preceq$ denotes the partial order defined in \eqref{EqOrdPart}. In particular, as 
\[
N,...,N-m+1\notin \left(\downarrow^{m}P_{[k]}\right)\cup\left(\downarrow^{m}\tau(P_{[k]})\right),\]
we only need to consider $K,L\subseteq[N-m]$. If then follows from this that the only $K,L$ that give a non-zero $Z_{KL}$ is 
\[
K = \downarrow^{m}\tau(P_{[k]}), \qquad L = \downarrow^{m}P_{[k]}.\]
We find a similar requirement for the right hand side, and so equation \eqref{EqGenCommRel} reduces to the required result.
\end{proof}

We now introduce our definition of highest weight vector:

\begin{Def}
Given an $\mcO_q(H(N))$-representation $V$, we call $v\in V$ a weight vector if it is an eigenvector for each $Z_{\downarrow^{m}S,k}$, $0\leq m\leq N-2$. To each weight vector $v$, we assign to it an ordered weight $w(v)\in\mathbb{R}^{M+\downarrow M+...}$ consisting of the non-unimodular parts of the eigenvalues of the $Z_{\downarrow^{m}S,k}$, ordered as follows:
\[
(Z_{\downarrow^{N-2}S,1}Z_{\downarrow^{N-2}S,2},Z_{\downarrow^{N-3}S,1},...,Z_{S,M}).\]
We call $v_{0}$ a highest weight vector if $w(v_{0})$ is maximal lexicographically in the set of all $w(v)$. 
\end{Def}

By our choice of definition, it immediately follows that a highest weight vector of an $\mcO_{q}(H(N))$ representation $V$ is also a highest weight vector in $\downarrow V$. From boundedness of the representation, it is also immediate that a highest weight vector necessarily exists. We want to show that when $V$ is irreducible, a highest weight vector is unique up to a scalar.

Before proving uniqueness, we need to describe in more detail how different elements will act in the representation. To simplify things, in what follows we will consider only rank one $Z_{ij}$. By Lemma \ref{LemqCommpi}, we see that any self-adjoint element will commute with the $Z_{S,k}$. Further from the $N=2$ classification in Section 2.2 of \cite{DCMo24a}, we see that $Z_{11}$ and $Z_{22}$ act as scalars on weight vectors. As $\sigma_{1}$ will also act as a scalar on irreducibles, by induction any $Z_{ii}$ can be taken to act as scalars on weight vectors. For $Z_{ij}$, $i\neq j$, if it $q$-commutes with at least one $Z_{S,k}$, it can be considered as a lowering or raising operator. Note that for general shapes, the $q$-powers appearing may be positive for one $Z_{S,k}$ and negative for another $Z_{S,k^{\prime}}$, and so whether the given $Z_{ij}$ is viewed as a lowering or raising operator becomes more complicated.

For general shapes, there may also be $Z_{ij}$, $i\neq j$, that commute with all $Z_{S,k}$. We will need to give a method for dealing with these to show the uniqueness of highest weight vectors. We start by describing explicitly which elements of $\mcO_q(H(N))$ will commute with all $Z_{S,k}$:
\begin{Prop}
	Let $Q\subset P_{[M]}$, and $S,T\subseteq u_{0}$, $\lvert S\rvert=\lvert T\rvert$, where $u_{0}:=\{1\leq i\leq N:u(i)=0\}$. Then every non-self adjoint element $Z_{IJ}\notin \{Z_{S,k}\}$ that commutes with all $Z_{S,k}$ is of the form
	\[ Z_{Q\cup S,\tau(Q)\cup T}\]
	where we allow $S=T$ only if $\tau(Q)\ne Q$, and we further allow $S=T=\emptyset$ only if $\tau(Q)\neq Q$ and $Q,\tau(Q)\neq P_{[k]}$.
\end{Prop}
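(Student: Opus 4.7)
The plan is to derive a system of combinatorial conditions on $I$ and $J$ from the $q$-commutation relations of Lemma \ref{LemqCommpi}, and then to classify the subset pairs satisfying them.

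By Lemma \ref{LemqCommpi}, the element $Z_{IJ}$ commutes with $Z_{S,k}$ precisely when
\[
|I\cap P_{[k]}|+|I\cap\tau(P_{[k]})| \;=\; |J\cap P_{[k]}|+|J\cap\tau(P_{[k]})|\qquad(1\leq k\leq M).
\]
Subtracting this identity at $k-1$ from the one at $k$, and using $P_{[k]}\setminus P_{[k-1]}=\{p_k\}$ together with the analogous identity for $\tau(P_{[k]})$, I obtain the local condition
\[
\mathbf{1}_{p_k\in I}+\mathbf{1}_{\tau(p_k)\in I}\;=\;\mathbf{1}_{p_k\in J}+\mathbf{1}_{\tau(p_k)\in J}\qquad(1\leq k\leq M).
\]
Taking $k=M$ in the original identity also gives $|I\cap P|=|J\cap P|$, and hence $|I\cap u_0|=|J\cap u_0|$ since $|I|=|J|$.

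By Corollary \ref{cor: self adjoint}, $\tau$ is an involution, so its orbits in $P$ are fixed points and transpositions. The local condition above forces $p_k\in I\Leftrightarrow p_k\in J$ at every fixed point, while at each 2-cycle $\{p_k,p_l\}\subseteq P$ it forces $|I\cap\{p_k,p_l\}|=|J\cap\{p_k,p_l\}|$. Setting $Q:=I\cap P$, $S:=I\cap u_0$, and $T:=J\cap u_0$, we thus have $|S|=|T|$; the remaining task is to identify $J\cap P$ with $\tau(Q)$.

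Cardinality alone leaves a choice of representative on each 2-orbit where $I$ and $J$ intersect in a single point. To upgrade the analysis to the rigid identification $J\cap P=\tau(Q)$, I plan to use equation \eqref{equation: restricted ZSk comm rels} to extend the per-orbit calculation to commutation with the restricted minors $Z_{\downarrow^m S,k}$, and to invoke the Muir-type identities of Section \ref{section: shapes def} to show that the ``wrong'' choices of representative yield minors either equivalent modulo $I^{\leq}_S$ to ones of the claimed form or vanishing in the quotient. With $I=Q\cup S$ and $J=\tau(Q)\cup T$ in hand, non-self-adjointness of $Z_{IJ}$ together with $u_{\tau(i)}=\bar u_i$ rules out the case $Q=\tau(Q)$ with $S=T$, and the exclusion $Z_{IJ}\notin\{Z_{S,k}\}$ rules out the case $S=T=\emptyset$ with $Q=P_{[k]}$ for some $k$, giving exactly the side conditions in the statement.

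The main obstacle is this rigidification step: the commutation relations with the $Z_{S,k}$ alone see only the total cardinality of $I\cap O$ on each $\tau$-orbit $O$, so they cannot by themselves distinguish an element of a 2-orbit from its $\tau$-image inside $J$. The restricted-shape commutation relations, together with the Muir/Laplace expansions, should provide the additional structural input needed to force $J\cap P=\tau(Q)$ rigorously.
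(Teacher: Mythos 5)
Your derivation of the commutation criterion from Lemma \ref{LemqCommpi} and the orbit-by-orbit reduction is exactly the content of the paper's own (one-sentence) proof, and for rank-one minors your analysis is genuinely complete: a singleton cannot occupy a $2$-orbit in two different ways, so the local condition $\mathbf{1}_{p_k\in I}+\mathbf{1}_{\tau(p_k)\in I}=\mathbf{1}_{p_k\in J}+\mathbf{1}_{\tau(p_k)\in J}$ already forces $j=\tau(i)$ or $i,j\in u_0$, which is the only case actually used downstream (Lemma \ref{lemma: ZS0 expansion} and Theorem \ref{theorem: unique highest weight} reduce everything to rank-one generators). Up to that point you match, and in fact sharpen, the paper's argument.

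The gap is the rigidification step, which you name but do not carry out: ``I plan to use \dots'' and ``should provide the additional structural input'' are a research plan, not a proof. Moreover the difficulty you flag is real and not merely a formality. Take $N=4$, $\tau=(1\,2)(3\,4)$ with all $u_i\neq 0$, and $I=\{1,3\}$, $J=\{1,4\}$. One checks that $|I\cap P_{[k]}|+|I\cap\tau(P_{[k]})|=|J\cap P_{[k]}|+|J\cap\tau(P_{[k]})|$ for every $k$ (both sides equal $1,2,3,4$), so $Z_{13,14}$ commutes with every $Z_{S,k}$ by \eqref{EqqCommQuot}; it is neither self-adjoint nor one of the $Z_{S,k}$; yet $\tau(\{1,3\})=\{2,4\}\neq\{1,4\}$, so it is not of the form $Z_{Q\cup S,\tau(Q)\cup T}$. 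Hence the commutation relations alone cannot yield the stated conclusion for higher-rank minors, and your proposed appeal to \eqref{equation: restricted ZSk comm rels} and the Muir identities would have to show that every such ``wrong-representative'' minor vanishes modulo $I^{\leq}_S$ --- a nontrivial vanishing statement that neither you nor the paper establishes (the paper's one-line proof suffers from exactly the same defect, so you have in fact located a weakness in the source rather than merely in your own write-up). To turn your proposal into a proof you must either restrict the claim to rank-one generators, which suffices for all later applications, or actually execute the vanishing argument for minors such as $Z_{13,14}$.
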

\begin{proof}
This follows from the commutation relations given by equation \eqref{EqqCommQuot}.
\end{proof}

To simplify notation, we introduce the following sets:

\begin{Def}
For a given shape, we denote by $Z_{0}^{S}$ the set of $Z_{IJ}$, $I\neq J$, such that $Z_{IJ}$ commutes with all $Z_{S,k}$. We further denote by $Z_{+}^{S}$ the set of $Z_{IJ}$, $I\neq J$ such that
\[
Z_{S,k}Z_{IJ} = q^{n_{k}}Z_{IJ}Z_{S,k},\]
for $n_{k}\in\mathbb{N}$, with at least one $n_{k}>0$. $Z_{-}^{S}$ is defined similarly for $n_k\in -\mathbb{N}$, with at least one $n_{k}<0$, or alternatively we can take $Z^{S}_{-}=(Z^{S}_{+})^{*}$. Finally we denote by  $Z_{d}^{S}$ the set
\[
\{Z_{S,k},Z_{S,k}^{*},Z_{S,k}^{-1},(Z_{S,k}^{-1})^{*}: 1\leq k\leq M\}.\]
\end{Def}

We want to show that we can rewrite elements of $Z_{0}^{S}$ in terms of elements that $q$-commute. 

\begin{Lem}\label{lemma: ZS0 expansion}
	Let $Z_{ij}$ be a rank 1 element of $Z^{S}_{0}$, then it can be written in terms of $Z^{S}_{\pm,d}$, in the form
	\[
	\sum Z^{S}_{+}Z^{S}_{-}Z^{S}_{d}.\]
\end{Lem}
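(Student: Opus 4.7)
The approach is to extract $Z_{ij}$ as a ``leading'' term in the Laplace expansion of a suitably chosen vanishing quantum minor, then invert $Z_{S,M}\in Z_d^S$. By the preceding proposition, the rank-one element $Z_{ij}\in Z_0^S$ falls into one of two cases: (A) $i,j\in u_0 := \{k\mid u(k)=0\}$ with $i\neq j$, or (B) $Z_{ij}=Z_{p,\tau(p)}$ with $p\in P_{[M]}$, $\tau(p)\neq p$ and $\{p,\tau(p)\}\cap\{p_1\}=\emptyset$. In both cases the plan is to produce an identity of the form
\[
0 \;=\; Z_{S,M}\,Z_{j,i} \;+\; \sum_\alpha c_\alpha\,Z_{A_\alpha,B_\alpha}\,Z_{C_\alpha,D_\alpha}
\]
in $\mcO_q(H(N))/I^{\leq}_S$, in which the sum is over a controlled collection of ``off-diagonal'' corrections.

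For Case (A), the identity is obtained by expanding the rank-$(M+1)$ minor $Z_{\tau(P_{[M]})\cup\{j\},\,P_{[M]}\cup\{i\}}$, which vanishes since $I_{M+1}\subseteq I^{\leq}_S$, via the Laplace formula \eqref{EqLaplExp4} along the row index $j$. Using the $\hat R$-support conditions \eqref{EqCondR} together with $i,j\in u_0$, the only non-zero summand at the leading choice of $P$ collapses to $Z_{\tau(P_{[M]}),P_{[M]}}\,Z_{j,i}=Z_{S,M}\,Z_{j,i}$, while every non-leading $P=\{r\}$ produces a product in which the extra indices $i,j$ appear in asymmetric positions relative to $P_{[M]}$. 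Case (B) proceeds similarly, starting from a vanishing minor dictated by the shape constraint in the definition of $I^{\leq}_S$ rather than from $I_{M+1}$, and using the hypothesis $\{p,\tau(p)\}\cap\{p_1\}=\emptyset$ to keep the correction terms off the shape diagonal.

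The main work is to verify that each correction product $Z_{A_\alpha,B_\alpha}\cdot Z_{C_\alpha,D_\alpha}$ lies in $Z_+^S\cdot Z_-^S$: the commutation exponent in \eqref{EqqCommQuot} shows that in every such product exactly one of the two factors has a strictly positive $q$-commutation with $Z_{S,M}$ while the other has a strictly negative one, by virtue of the extra index $i$ (resp.\ $j$) appearing unmatched in one factor only. Left-multiplying the displayed identity by $Z_{S,M}^{-1}\in Z_d^S$ (invertible by the shape hypothesis $\Ker(\pi(Z_{S,M}))=0$), taking adjoints to pass from $Z_{j,i}$ to $Z_{i,j}=Z_{j,i}^*$, and reordering the resulting products via \eqref{EqqCommQuot} into the prescribed order $Z_+^S Z_-^S Z_d^S$, yields the claimed decomposition. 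The main obstacle is the combinatorial bookkeeping of the $\hat R$-support conditions for the non-leading terms; this forms the bulk of the verification but introduces no genuinely new ingredient beyond \eqref{EqCondR} and \eqref{EqqCommQuot}.
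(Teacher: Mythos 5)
Your overall strategy is the one the paper uses: embed $Z_{ij}$ into a higher-rank minor controlled by the shape, Laplace-expand so that $Z_{ij}\,Z_{S,m}$ appears as the extremal term, check via \eqref{EqqCommQuot} that every correction lands in $Z^{S}_{+}Z^{S}_{-}$, and then invert $Z_{S,m}$. (The paper works with $Z_{\{i\}\cup\tau(P_{[m]}),\{j\}\cup P_{[m]}}$ for the \emph{minimal} $m$ such that this minor is either some $Z_{S,k}$ or lies in $I^{\leq}_S$, and uses \eqref{EqLaplExp3}; your use of the full rank-$(M+1)$ minor and \eqref{EqLaplExp4} in Case (A) is a harmless variant, and your observation that the unmatched index forces one factor of each correction into $Z^{S}_{+}$ and the other into $Z^{S}_{-}$ is the right check.)

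There are, however, two genuine problems. First, Case (B) as you describe it cannot be carried out: for $Z_{p,\tau(p)}$ with $p,\tau(p)\in P_{[M]}$ there is \emph{no} vanishing minor whose Laplace expansion isolates $Z_{p,\tau(p)}$ against a shape minor. The natural candidate $Z_{\tau(P_{[k]})\cup\{p\},\,P_{[k]}\cup\{\tau(p)\}}$ is, when $\tau(p)=p_{k+1}$, exactly the \emph{nonzero} minor $Z_{S,k+1}$; and when $\tau(p)>p_{k+1}$ its column index is lexicographically larger than $P_{[k+1]}$, so it is not among the generators of $I^{\leq}_S$ and need not vanish. The correct move (which is what the paper's phrasing ``$\in\{Z_{S,k}\}$ or $\in\mathcal{I}_{S}$'' for the minimal $m$ accommodates) is to expand the nonzero minor $Z_{S,k+1}$ itself; the identity then reads $Z_{S,k+1}=c\,Z_{p,\tau(p)}Z_{S,k}+(\text{corrections})$, and the extra term $Z_{S,k+1}Z_{S,k}^{-1}$ is absorbed into the $Z^{S}_{d}$ part of the claimed form. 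Second, in both cases the coefficient of the extremal term $Z_{ij}Z_{S,m}$ is not a single $\hat{R}$-entry but a sum $\sum_{P}(-q)^{\wt(P)-\wt(K)}(\hat{R}^{-1})^{i,I_P}_{I^P,\tau(P_{[m]})}$ over all admissible $P$, and its non-vanishing does not follow from \eqref{EqCondR} alone; the paper has to invoke Lemma 3.18 of \cite{DCMo24b} for exactly this point. Without that input you cannot divide by the coefficient, so this is a missing step rather than mere bookkeeping.
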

\begin{proof}
	We can assume $i>j$, and either $i=\tau(j)$ or $i,j\in u_{0}$. In either case there is a minimal $1\leq m\leq M$ such that $Z_{i\cup \tau(P_{[m]}),j\cup P_{[m]}}\in\{Z_{S,k}\}$ or $\in \mathcal{I}_{S}$. Let $I:=i\cup\tau(P_{[m]})$, $J:=j\cup P_{[m]}$, choose $K$ such that $J_{K}=j$, and consider this substituted into equation \eqref{EqLaplExp3}. 
	
	$A$ is a single element set, so we must have either $A=I_{P}$ and $B=I^{P}$, or else $A\in I^{P}$ and $B=(I^{P}\setminus A)\cup I_{P}$. Hence we can write $A=I_{Q}$ and $B=I^{Q}$. Further, as $j\notin I$, we then must have $C=j$, $D=I^{Q}$, and so equation \eqref{EqLaplExp3} becomes:
	\[
	Z_{IJ}=\sum_{P,Q,Q\geq P}(-q)^{wt(P)-wt(K)}(\hat{R}^{-1})^{I_{Q},I_{P}}_{I^{P},I^{Q}}\hat{R}^{j,j}_{I^{Q},I^{Q}}Z_{I_{Q},j}Z_{I^{Q},P_{[m]}}\]
	There is some $Q_{i}$ such that $I_{Q_{i}}=i$, then for any $Q>Q_{i}$, $Z_{I^{Q},P_{[m]}}=0$. Hence we can write:
	\begin{align}
		\sum\limits_{P}(-q)^{P-K}(\hat{R}^{-1})^{i,I_{P}}_{I^{P},\tau(P_{[m]})}Z_{ij}&=\left(Z_{IJ}-\sum\limits_{P,Q\geq P}^{Q_{i}-1}(-q)^{P-K}(\hat{R}^{-1})^{I_{Q},I_{P}}_{I^{P},I^{Q}}Z_{I_{Q},j}Z_{I^{Q},P_{[m]}}\right)Z_{S,m}^{-1}
	\end{align}
	The sum on the left hand side is the same identity that appeared in the left hand side of equation (3.7) of \cite{DCMo24b}, and which was shown to be non-zero in Lemma 3.18 of \cite{DCMo24b}. On the right hand side, for any $Q\neq Q_{i}$, we must have $Z_{I^{Q},P_{[m]}}\in Z^{S}_{-}$, and $Z_{I_{Q},j}\in Z^{S}_{+}$. Hence we have written $Z_{ij}$ in the required form.
\end{proof}
\begin{Theorem}\label{theorem: unique highest weight}
	Every bounded irreducible $\mcO_q(H(N))$ $*$-representation on a pre-Hilbert space has a unique highest weight vector.
\end{Theorem}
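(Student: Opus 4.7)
The plan is to show that any highest weight vector $v_0\in V$ is annihilated by a family of raising operators, and then to use irreducibility together with Lemma \ref{lemma: ZS0 expansion} to deduce that the $w(v_0)$-weight subspace of $V$ is one-dimensional. Existence of a highest weight vector follows immediately from boundedness of $\pi$: the coordinates of the ordered weight tuple are eigenvalues of bounded normal operators, so they are bounded, and the lexicographic maximum is attained.

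Because the lexicographic order on weight tuples lists the more restricted components first, any highest weight vector $v_0$ of $V$ is simultaneously a highest weight vector for each restriction $\downarrow^{l}V$. Applying Lemma \ref{LemqCommpi} (together with its restricted version \eqref{equation: restricted ZSk comm rels}) at each level, for any $0\leq l\leq N-2$ and any $Z_{IJ}$ in the analogue of $Z^{S}_{-}$ for $\downarrow^{l}S$, acting by $Z_{IJ}$ would leave the more restricted components of the weight tuple unchanged while strictly raising at least one less restricted component; by lexicographic maximality of $w(v_0)$, this forces $Z_{IJ}v_0 = 0$. In particular, every element of $Z^{S}_{-}$ annihilates $v_0$.

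Next I analyse the action of $Z^{S}_{0}$ on $v_0$. The self-adjoint members (the diagonal $Z_{ii}$, together with $\sigma_{1}$) act as scalars on weight vectors by the $N=2$ classification and induction, as recalled just before Lemma \ref{lemma: ZS0 expansion}. For a non-self-adjoint rank one $Z_{ij}\in Z^{S}_{0}$, Lemma \ref{lemma: ZS0 expansion} writes $Z_{ij}$ in $\mcO_q(H(N))/I^{\leq}_{S}$ as a sum of products of the form $Z^{S}_{+}Z^{S}_{-}Z^{S}_{d}$. Since $\pi$ factors through this quotient by the shape condition $I^{\leq}_{S}\subseteq \Ker\pi$, applying such a product to $v_0$ first turns $Z^{S}_{d}v_0$ into a non-zero scalar multiple of $v_0$, whereupon the $Z^{S}_{-}$ factor annihilates it. Hence $Z_{ij}v_0 = 0$, and by taking adjoints $(Z^{S}_{0})^{*}v_0 = 0$ as well.

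Finally, irreducibility gives $V = \overline{\pi(\mcO_q(H(N)))v_0}$. Using the $q$-commutation relations \eqref{EqqCommQuot} and the general commutation relation \eqref{EqGenCommRel}, an arbitrary word in the generators $Z_{IJ}$ of $\mcO_q(H(N))/I^{\leq}_{S}$ can be rearranged into a finite sum of monomials of the form $(\text{word in }Z^{S}_{+})(\text{word in }Z^{S}_{d})(\text{word in }Z^{S}_{-}\cup Z^{S}_{0})$, up to correction terms that lie in strictly lower pieces of the natural weight filtration. Applied to $v_0$, only the monomials whose rightmost block acts trivially survive, and each such nonzero monomial is a weight vector whose weight is strictly smaller than $w(v_0)$ unless its $Z^{S}_{+}$ block is also trivial. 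Thus the $w(v_0)$-weight subspace of $V$ equals $\C v_0$, and any other highest weight vector $v_{0}'$ must lie in this subspace because $w(v_{0}')=w(v_0)$ by maximality on both sides. The hardest step is the rearrangement argument: one must inductively track the correction terms produced by \eqref{EqGenCommRel} and by Lemma \ref{lemma: ZS0 expansion}, and verify that they all either annihilate $v_0$ or sit in strictly lower weight spaces, so that they cannot contribute a second independent vector at weight $w(v_0)$.
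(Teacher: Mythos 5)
Your first half runs parallel to the paper's argument: existence of a highest weight vector from boundedness, the fact that $Z^{S}_{-}$ (and its analogues for the restrictions $\downarrow^{l}S$) annihilates $v_{0}$ by lexicographic maximality, and the treatment of $Z^{S}_{0}$ via Lemma \ref{lemma: ZS0 expansion}. One small inaccuracy there: the expansion in that lemma can contain a summand of the form $Z_{I,J}Z_{S,m}^{-1}$ with $Z_{I,J}\in\{Z_{S,k}\}$, which acts as a \emph{non-zero} scalar on $v_{0}$, so the correct conclusion is that $v_{0}$ is an eigenvector of a rank-one element of $Z^{S}_{0}$ (possibly with non-zero eigenvalue), not that it is always annihilated; this does not damage your argument, since an eigenvector is all you need.

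The genuine gap is in your final step, and it is twofold. First, the sets $Z^{S}_{+}$, $Z^{S}_{-}$, $Z^{S}_{0}$, $Z^{S}_{d}$ together with the diagonal $Z_{ii}$ do \emph{not} exhaust the generators: for a general shape there are $Z_{IJ}$ whose $q$-commutation exponents $n_{k}$ with the various $Z_{S,k}$ have mixed signs, and these belong to none of your blocks. The paper flags exactly this complication just before introducing these sets. Such elements still change the weight (so they cannot map one highest weight vector to another), but they have no place in your claimed normal form $(Z^{S}_{+})(Z^{S}_{d})(Z^{S}_{-}\cup Z^{S}_{0})$. Second, and more seriously, the rearrangement of an arbitrary word into that normal form ``up to correction terms in strictly lower pieces of the natural weight filtration'' is a PBW-type theorem for $\mcO_q(H(N))/I^{\leq}_{S}$ relative to a shape-dependent triangular splitting. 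You assert it but do not prove it, and it is not a formal consequence of \eqref{EqGenCommRel}: the correction terms produced by that relation are sums of products of quantum minors with $\hat{R}$-coefficients, and controlling where they sit in your filtration (and iterating this consistently) is precisely the hard content. Nothing in the paper supplies such a statement; instead the paper sidesteps it by writing $v_{0}'=zv_{0}$, disposing of rank-one $z$ by the weight/eigenvector analysis you already have, and then handling higher-rank monomials by induction on $N$: two highest weight vectors of $V$ are both highest weight vectors of $\downarrow V$, so by the induction hypothesis they lie in distinct $\mcO_q(H(N-1))$-irreducible components, whence a connecting monomial must contain some $Z_{lN}$ or $Z_{Nl}$, which can be moved to an end of the word so that it acts directly on a highest weight vector, reducing to the rank-one case. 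To rescue your weight-space approach you would need to actually establish the normal-ordering statement, including the mixed-sign generators; as written, the proof is incomplete at its crucial step.
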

\begin{proof}
	Assume there is an irreducible representation $V$ with highest weight vectors $v$, $v^{\prime}$. Then there must be some $z\in\mcO_q(H(N))/I^{\leq}_S$ such that $zv=v^{\prime}$. We first consider $z$ to be a rank one generator. As $v$ is a highest weight vector, if $z\in Z^{S}_{-}$, then $zv=0$. Similarly, if $z\in Z^{S}_{+}$, then $z^{*}v^{\prime}=0$. More generally, if $z$ $q$-commutes non-trivially with $Z_{S,k}$ then it must act as a lowering or raising operator, and cannot map between $z$ and $z^{\prime}$. If $z\in Z^{S}_{0}$, then by Lemma \ref{lemma: ZS0 expansion}, either $zv=0$, or $zv=\lambda v$ for some scalar $\lambda$. The only remaining options are $z=Z_{S,k}$ for some $k$, or $z=Z_{ii}$ for some $i$, both of which we know will act as scalars on weight vectors. 
	
	Now consider the case when $z$ is a higher rank element. We can take $z$ to be a product of rank 1 generators of the form 
	\[
	Z_{i_{1},j_{1}}Z_{i_{2},j_{2}}...Z_{i_{m},j_{m}}.\]
	From the $N=2$ classification in Section 2.2 of \cite{DCMo24a}, we know that every $\mcO_{q}(H(2))$ irreducible has a unique highest weight vector. We can then proceed by induction. Assume true for $N-1$, then if an $\mcO_{q}(H(N))$ irreducible contains multiple highest weight vectors, they must lie in different $\mcO_{q}(H(N-1))$ irreducibles. It follows for $z$ to map between multiple $\mcO_{q}(H(N))$ highest weight vectors, it must contain $Z_{lN}$ or $Z_{Nl}$ for some $1\leq l<N$. However, from the $\mcO_{q}(H(N))$ relations, we know we can always pull $Z_{lN}$ or $Z_{Nl}$ to the left or rightmost position, so that it acts directly on $v$ or $v^{\prime}$. This then reduces to the rank one case. It then follows that $V$ has a unique highest weight vector. 
\end{proof}

\section{The $\mathcal{O}_{q}(U(N))$ coaction on shapes}\label{section: coaction on shapes}

Recall the coaction $\alpha_{i}:\mcO_q(H(N))\rightarrow\mcO_q(H(N))\otimes B(l^2(\N))$, defined by equation \eqref{def: coaction UN}. If we consider $\alpha_{i}$ applied to an irreducible $\mcO_q(H(N))$ representation $V$, then in certain cases $\alpha_{i}$ can change the shape of the resulting representation. We want to describe this in more detail. 

We denote $I_{(k)}:=I\cap\{k,k+1\}$, we also denote by $I^{(k)}$ the set given by permuting $k$ and $k+1$ in $I$. Using the general coaction formula \eqref{equation: qminor coaction}, it is straightforward to see that in terms of the quantum minors, the coaction $\alpha_{k}$ is given by:
\begin{align}\label{equation: SU2 coaction}
	\alpha_{k}(Z_{IJ})=& Z_{IJ}\otimes X^{*}_{I_{(k)}I_{(k)}}X_{J_{(k)}J_{(k)}}+Z_{I^{(k)}J}\otimes X^{*}_{I^{(k)}_{(k)}I_{(k)}}X_{J_{(k)}J_{(k)}} \nonumber\\
	&+Z_{IJ^{(k)}}\otimes X^{*}_{I_{(k)}I_{(k)}}X_{J^{(k)}_{(k)}J_{(k)}}+Z_{I^{(k)}J^{(k)}}\otimes X^{*}_{I^{(k)}_{(k)}I_{(k)}}X_{J^{(k)}_{(k)}J_{(k)}}
\end{align}

where we take $X_{I_{(k)}I_{(k)}}=1$, $X_{I_{(k)}^{(k)}I_{(k)}}=0$ if $I_{(k)}=\emptyset$. From this we see that for $\alpha_{k}$ to change the shape of a representation, there must be a $Z_{IJ}=0$ and a $Z_{I^{\prime}J^{\prime}}\neq 0$, $(J,I)<(J^{\prime},I^{\prime})$, such that $(I,J)$ is obtained from $(I^{\prime},J^{\prime})$ by permuting $k$ and $k+1$.

We can now describe how $\alpha_{k}$ changes the shape in terms of the shape matrix.
\begin{Theorem}\label{theorem: Uq coaction}
	Let $V$ be an irreducible representation, $S$ its shape matrix, and let $c_{k}$ be the permutation matrix corresponding to the transposition $(k,k+1)$. If $\tau(k)=k+1$, then $\alpha_{k}V$ will contain two irreducibles, whose shapes are given by replacing the $(k,k+1)$ sub-matrix of $S$ by the matrix $\text{Diag}\{\pm 1,\mp 1\}$. Otherwise, if the shape of $\alpha_{k}V$ is different to the shape of $V$, then it is given by $c_{k}Sc_{k}$. 
\end{Theorem}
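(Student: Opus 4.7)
The plan is to exploit formula \eqref{equation: SU2 coaction} together with the observation made just before the theorem: a shape change under $\alpha_k$ occurs precisely when some minor $Z_{I,J}$ that vanishes in $V$ acquires a non-zero contribution via one of the swapped pairs $(I^{(k)},J)$, $(I,J^{(k)})$, or $(I^{(k)},J^{(k)})$. The analysis therefore reduces to tracking, for each rank $m \leq M$, how $\{k,k+1\}$ intersects the pair $(P_{[m]},\tau(P_{[m]}))$ that determines the shape of $V$, and to decomposing the restriction of $\alpha_k V$ to each rank stratum.

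I would first dispose of the case $\tau(k) \ne k+1$ (equivalently $\tau(k+1) \ne k$, by self-adjointness of $S$). Here I would enumerate the finitely many configurations of $\{k,k+1\}$ relative to each pair $(P_{[m]},\tau(P_{[m]}))$ and observe that the new minors produced by \eqref{equation: SU2 coaction} are always obtained by swapping $k \leftrightarrow k+1$ in one or both of the index sets. Such a new minor can lower the lex-minimal non-zero pair at some rank only when this swap genuinely decreases the pair in the order on $\binom{[N]}{m}\times\binom{[N]}{m}$; in that event the resulting minimal minor pattern at every rank is precisely that of $c_k S c_k$, since conjugation by $c_k$ simultaneously relabels all the pairs $(P_{[m]},\tau(P_{[m]}))$ in a consistent way. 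If no swap produces a strictly smaller pair then the shape is unchanged, confirming the dichotomy.

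For $\tau(k)=k+1$ I would localise to the unique rank $m_0$ at which $k+1$ first enters $P_{[m_0]}$, so that $\tau(P_{[m_0]}) \cap \{k,k+1\} = \{k+1\}$, $P_{[m_0]} \cap \{k,k+1\} = \{k\}$, and the $(k,k+1)$-block of $S$ is the anti-diagonal $\begin{pmatrix} 0 & u_k \\ \bar u_k & 0 \end{pmatrix}$. Applying \eqref{equation: SU2 coaction} to $Z_{S,m_0}$ and to the two "diagonal-block" minors obtained from it by swapping $k \leftrightarrow k+1$ in one index isolates a $2\times 2$ block on $V \otimes \ell^2(\N)$ whose structure matches exactly the $\mcO_q(H(2))$ situation of Section 2.2 of \cite{DCMo24a} for an anti-diagonal shape. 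From the $N=2$ classification, that representation decomposes into two irreducible components whose $(k,k+1)$-blocks are $\mathrm{Diag}\{+1,-1\}$ and $\mathrm{Diag}\{-1,+1\}$ respectively. All other entries of the shape are unchanged, because for $m \ne m_0$ one has either $\{k,k+1\} \subseteq P_{[m]}$ or $\{k,k+1\} \cap P_{[m]} = \emptyset$, and both configurations are fixed by $c_k$.

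The main obstacle is the $\tau(k)=k+1$ case, where I must actually exhibit the two irreducible components and verify their shapes rather than merely observe that mixing can occur. The key technical step is justifying the reduction to $\mcO_q(H(2))$: although $\mbs_k$ only touches the $(k,k+1)$ matrix entries of $X$, showing that the local $2\times 2$ problem decouples cleanly from the remaining shape data requires the Muir-type identities \eqref{EqMuirBr}--\eqref{EqMuirBr2} applied to pairs containing both of $k,k+1$, together with Theorem \ref{theorem: unique highest weight} to conclude that the highest weight vectors in $V \otimes \ell^2(\N)$ for the two candidate shapes account for the entirety of $\alpha_k V$.
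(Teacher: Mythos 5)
Your overall strategy runs parallel to the paper's, and you have correctly located the hard point, but the proposal leaves that point unproved, so there is a genuine gap. In the case $\tau(k)=k+1$ you assert that the four minors indexed by $P_{[m_0]}$ and $\tau(P_{[m_0]})$ ``isolate a $2\times 2$ block \ldots whose structure matches exactly the $\mcO_q(H(2))$ situation'', and you then import the $N=2$ classification to produce the two components. This reduction does not exist in the clean form you describe: those four minors do not generate a copy of $\mcO_q(H(2))$ inside $\mcO_q(H(N))/I^{\leq}_S$. What is true (and what the paper proves as Lemma \ref{lemma: gen relation}, via the Muir expansion \eqref{EqMuirBr}) is that they satisfy a quadratic relation whose ``determinant'' term is $-q^2 Z_{P_{[m+1]},P_{[m+1]}}Z_{P_{[m-1]},P_{[m-1]}}$, i.e.\ it couples to the neighbouring-rank diagonal minors rather than closing up on the block itself. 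Applying $\alpha_k$ to that relation yields a quadratic equation $x^2-w_\tau x+q^2w_{m+1}w_{m-1}=0$ for the eigenvalue of $\alpha_k(Z_{P_{[m]},P_{[m]}})$ on a highest weight vector (Proposition \ref{prop: highest weight diag}); the two sign choices in $\mathrm{Diag}\{\pm 1,\mp 1\}$ come from the two roots. None of this follows from the $N=2$ classification.

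The more serious omission is existence: knowing that the eigenvalue must be one of two roots does not show that both roots are realized in $\alpha_k V$. Your appeal to Theorem \ref{theorem: unique highest weight} only gives uniqueness of the highest weight vector within each irreducible component; it cannot rule out that $\alpha_k V$ contains components of only one of the two candidate shapes. The paper closes this by explicitly solving for the highest weight vector $v_0\otimes\sum_i\lambda_i e_i$ in the $l^{2}(\N)$ basis of $\mbs$: requiring $\alpha_k(Z_{P_{[m]},P_{[m]}})$ to act by $x$ and $\alpha_k(Z_{\tau(P_{[m]}),P_{[m]}})$ to act by $0$ forces the recursion $\lambda_{i+1}=\bar{f}c_i\lambda_i/(xa_{i+1})$, which has a nonzero solution for each of the two roots $x$, giving two linearly independent highest weight vectors (Proposition \ref{prop: diagonalize tau(k)}); a further argument is then needed to exclude the original shape $S$ itself from reappearing. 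Some analogous concrete computation is unavoidable here. For the generic case $\tau(k)\neq k+1$ your rank-by-rank enumeration is a reasonable alternative to the paper's induction on $N$ via Proposition \ref{proposition: restriction of shapes}, though you should still verify that the $\mbs$-matrix coefficients appearing in \eqref{equation: SU2 coaction} (e.g.\ $X_{21}$, which acts diagonally with nonzero entries) are injective, so that the swapped minor is genuinely nonzero on the new representation and the minimal pair really does move to the $c_kSc_k$ position.
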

\begin{proof}
	
	We first note that when $\tau(i)\neq i$, we can freely choose the unimodular part of the $\mcO_q(SU(2))$ irreducible to give a required $u(i)$ under the coaction. Hence we have made the choice to have the coaction preserve unimodular parts when $\tau(i)\neq i$.
	
	For $N=2$, there are two choices of shape to consider, the first is $u(1)=0$, $u(2)\neq 0$, and the second is $\tau(1)=2$. Either case will result in $Z_{11}=0$, however we have
	\[
	\alpha_{1}(Z_{11}) = Z_{11}\otimes X_{11}^{*}X_{11}+Z_{12}\otimes X_{11}^{*}X_{21}+Z_{21}\otimes X_{21}^{*}X_{11}+Z_{22}\otimes X^{*}_{21}X_{21}.\]
	Hence the resulting representation has $Z_{11}\neq 0$, and so its shape must be as described by the theorem.
	
	We now proceed by induction on $N$: For $\alpha_{k}$ with $1\leq k\leq N-2$, consider the restriction to the $\mcO_q(H(N-1))$ shape as described in Proposition \ref{proposition: restriction of shapes}. Restriction and coaction on shapes will commute, so we only need to consider the case when $\alpha_{k}$ affects $\tau(N)$. It is straightforward to see that this will only occur if $\tau(N)=k$ and $u(k+1)\neq 0$, in which case we will have $\downarrow u(k)=0$, $\alpha_{k}\left(\downarrow u(k)\right)\neq 0$. It then follows that we must have $\alpha_{k}\tau(N)=k+1$, and so the coaction has changed the shape as described.
	
	Next, consider $\alpha_{N-1}V$. We will consider three different cases: $\tau(N)=N$, $\tau(N)=i<N-1$, and $\tau(N)=N-1$. In the first case, for the shape to change we must have $u(N-1)=0$, so that $Z_{S,m}=Z_{I\cup\{N\},J\cup\{N\}}$, and we have
	\[
	\alpha_{N-1}(Z_{S,m}) = \alpha_{N-1}(Z_{I\cup\{N-1\},J\cup\{N-1\}}) = Z_{I\cup\{N\},J\cup\{N\}}\otimes X_{N,N-1}^{*}X_{N,N-1}\]
	Hence the new shape has the values of $\tau,u$ on $N-1,N$ switched, which can be seen to come from $c_{N-1}Sc_{N-1}$.
	
	For the second case, let $p_{m}=i$, and consider $Z_{S,m}$. Then as $\tau(N)=i<N-1$, we have $N-1,N\notin P_{[m]}$. If $\tau(N-1)=j<i$, with $p_{m^{\prime}}=j$, then, noting that $\alpha_{N-1}$ must affect $\tau,u,$ of $N-1$ and $N$, we see $Z_{S,m^{\prime}}$ will not change and so the shape stays the same. However, if $\tau(N-1)=j>i$, then
	\[
	\alpha_{N-1}(Z_{S,m}) = \alpha_{N-1}(Z_{\tau(P_{[m]})\cup\{N-1\}\setminus N,P_{[m]}}) = Z_{\tau(P_{[m]}),P_{[m]}}\otimes X_{21}^{*}.\]
	Hence we now have $\tau(N-1)=i$, and the shape matrix can be seen to transform in the required way.
	
	Finally, for the third case, if $p_{m}=N-1$, we see that 
	\[
	\alpha_{N-1}(Z_{\tau(P_{[m]})\cup\{N-1\}\setminus N,P_{[m]}})\neq 0,\]
	so we must have $\tau(N)=N-1$ replaced with $\tau(N-1)=N-1$, $\tau(N)=N$. We will show that both sign choices for $u(N-1,N)$ appear in Section \ref{section: Diag}. 
\end{proof}
Apart from the special case where $\tau(k)=k+1$, every other case of shape change will involve the tensor product of diagonal operators. Hence it can be seen that if $V$ is an irreducible of shape $S$, then every irreducible in the decomposition of $\alpha_{k}S$ will have the same shape as described in Theorem \ref{theorem: Uq coaction}. We will show later that this is also true for the $\tau(k)=k+1$ special case.
\begin{Cor}\label{coaction to big cell}
	Let $V$ be an irreducible $\mcO_q(H(N))$ representation. Then we can choose a sequence $\{i_{1},...,i_{m}\}$ such that $\alpha_{i_{m}}...\alpha_{i_{1}}V$ is a big cell representation.  
\end{Cor}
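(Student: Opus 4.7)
My plan is a two-phase constructive procedure based on Theorem \ref{theorem: Uq coaction}. Because $\tau$ is an involution (Corollary \ref{cor: self adjoint}) and the shape is self-adjoint, so $u_i\in\{-1,0,1\}$ at every fixed point of $\tau$, reducing to a big cell amounts to (i) replacing every $2$-cycle of $\tau$ by fixed points, and (ii) rearranging the resulting diagonal shape matrix so that its nonzero entries sit in positions $1,\ldots,M$.

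For (i), I would single out the $2$-cycle $(i,j)$ of $\tau$ with $j$ minimal and shrink $j-i$ down to $1$ by a sequence of conjugations. Concretely, if $j>i+1$, we have $\tau(j-1)\ne j$ (since $\tau(j)=i$ and $\tau$ is an involution), so Theorem \ref{theorem: Uq coaction} places $\alpha_{j-1}$ in the non-special case with resulting shape in $\{S,\,c_{j-1}Sc_{j-1}\}$. Examining how $c_{j-1}$ acts on $\tau$ case-by-case (whether $j-1$ is a fixed point or belongs to another $2$-cycle) shows that $c_{j-1}Sc_{j-1}\ne S$ and, in either sub-case, that the new minimal $j$-value among $2$-cycles drops by exactly one. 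A direct inspection of the coaction formula \eqref{equation: SU2 coaction} confirms that the shape actually changes to $c_{j-1}Sc_{j-1}$: the summand $Z_{I^{(j-1)}J^{(j-1)}}\otimes X^{*}_{\cdot\cdot}X_{\cdot\cdot}$ contributes a nonzero operator to $\alpha_{j-1}(Z_{IJ})$ for any witness pair $(I,J)$ distinguishing $S$ from $c_{j-1}Sc_{j-1}$. After finitely many such steps the $2$-cycle lies at adjacent positions $(i,i+1)$, and then $\alpha_i$ breaks it via the special case of Theorem \ref{theorem: Uq coaction}, replacing the block with $\diag\{\pm 1,\mp 1\}$; pick either of the two irreducibles. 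This strictly decreases the number of $2$-cycles, so phase (i) terminates.

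For (ii), the shape matrix is now diagonal with entries in $\{-1,0,1\}$. As long as some adjacent pair $(u_k,u_{k+1})=(0,\pm 1)$ exists, we have $\tau(k)=k\ne k+1$ and $c_kSc_k\ne S$, so by the same reasoning as above $\alpha_k$ swaps the entries in positions $k$ and $k+1$. Finitely many such bubble-sort steps place the $M$ nonzero entries at positions $1,\ldots,M$, yielding a big cell shape. The main technical gap I would need to fill is the implicit converse to Theorem \ref{theorem: Uq coaction}: the theorem states only that \emph{if} the shape changes \emph{then} it becomes $c_kSc_k$, whereas both phases above require that $c_kSc_k\ne S$ forces the shape to change. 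As indicated, this is obtained by tracing a single non-vanishing summand through the coaction \eqref{equation: SU2 coaction}, and this is the step where I expect the greatest care to be required.
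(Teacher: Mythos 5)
Your two-phase procedure (shrink each $2$-cycle to adjacent positions and break it via the special case of Theorem \ref{theorem: Uq coaction}, then bubble-sort the resulting diagonal) is exactly the intended argument; the paper treats the corollary as immediate from Theorem \ref{theorem: Uq coaction}, and the same moves appear in reverse in Proposition \ref{prop: construction of shapes}. The one point that needs correcting is the "converse" you flag at the end: it is \emph{not} true that $c_kSc_k\neq S$ forces the shape to change, and your proposed fix ("the summand $Z_{I^{(k)}J^{(k)}}\otimes\cdots$ contributes a nonzero operator for any witness pair") cannot work as stated. The coaction is directional: for $N=2$, $\alpha_1$ sends shape $\diag(0,\pm1)$ to $\diag(\pm1,0)$ (the term $Z_{22}\otimes X_{21}^*X_{21}$ survives because everything below $Z_{22}$ vanishes), but applied to a representation of shape $\diag(\pm1,0)$ it does \emph{not} produce shape $\diag(0,\pm1)$, since $\alpha_1(Z_{11})=\sum_{k,l}Z_{kl}\otimes X_{k1}^*X_{l1}$ has no reason to vanish when $Z_{11}\neq0$; several summands are now nonzero and a single "witness" term tells you nothing. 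The same asymmetry appears in the proof of Theorem \ref{theorem: Uq coaction}: with $\tau(N)=i<N-1$, the shape changes when $\tau(N-1)>i$ but explicitly does \emph{not} change when $\tau(N-1)<i$, even though $c_{N-1}Sc_{N-1}\neq S$ in both cases.

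Fortunately every move in your algorithm is in the direction for which the non-vanishing is actually established in that proof, so the gap closes without new computation --- but you should say why. In phase (i), minimality of $j$ among the maxima of $2$-cycles forces $\tau(j-1)\geq j-1>i=\tau(j)$ (a value $\tau(j-1)<j-1$ would give a $2$-cycle of smaller maximum), which after restricting to $\mcO_q(H(j))$ is precisely the case "$\tau(N-1)>\tau(N)$" where $\alpha_{N-1}(Z_{\tau(P_{[m]})\cup\{N-1\}\setminus N,\,P_{[m]}})=Z_{S,m}\otimes X_{21}^*\neq0$ is shown, so the previously vanishing minor becomes nonzero and the shape is $c_{j-1}Sc_{j-1}$. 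In phase (ii) your configuration $(u_k,u_{k+1})=(0,\pm1)$ is, after restriction, exactly the $N=2$ base case / first case of the $\alpha_{N-1}$ analysis, where the change is likewise forced; the reverse configuration $(\pm1,0)$ never needs to be moved. With that substitution for your "witness pair" heuristic, the argument is complete (modulo the harmless point that in the special case $\alpha_iV$ contains both sign choices, so one should say each irreducible constituent of $\alpha_{i_m}\cdots\alpha_{i_1}V$ is a big cell representation rather than "pick" one).
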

\begin{Def}\label{def: shape signature}
If $S$ is a shape and there is an $\mcO_q(U(N))$ coaction that takes $S$ to a big cell representation of shape $\epsilon$ and signature $\zeta_\epsilon$, then we say $S$ has \emph{signature} $\zeta_\epsilon$.
\end{Def}
By a combination of weak containment and coinvariance of the centre, this definition of signature for shapes can be seen to be well-defined.

\subsection{Diagonalizing the $\alpha_{k}$ coaction when $\tau(k)=k+1$}\label{section: Diag}

We want to show that when we act $\alpha_{k}$ on a representation with shape such that $\tau(k)=k+1$, the result will contain two irreducible representations with the $k,k+1$ sub matrix of their shapes corresponding to $\text{Diag}\{\pm 1,\mp 1\}$, with both sign choices appearing. 

To begin, we will need the following lemma:

\begin{Lem}\label{lemma: gen relation}
	If $\tau(k)=k+1$, with $k=p_{m}$, and $\tau(P_{[m-1]})=P_{[m-1]}$, then we have:
	\begin{align}
		&Z_{P_{[m]},\tau(P_{[m]})}Z_{\tau(P_{[m]}),P_{[m]}} \nonumber\\ =&-q^{2}Z_{P_{[m+1]},P_{[m+1]}}Z_{P_{[m-1]}P_{[m-1]}}+(q^{2}Z_{P_{[m]},P_{[m]}}+Z_{\tau(P_{[m]}),\tau(P_{[m]})})Z_{P_{[m]},P_{[m]}}-Z_{P_{[m]},P_{[m]}}^{2}\\
		&Z_{\tau(P_{[m]}),P_{[m]}}Z_{P_{[m]},\tau(P_{[m]})} \nonumber\\ =&-q^{2}Z_{P_{[m+1]},P_{[m+1]}}Z_{P_{[m-1]}P_{[m-1]}}+q^{2}(q^{2}Z_{P_{[m]},P_{[m]}}+Z_{\tau(P_{[m]}),\tau(P_{[m]})})Z_{P_{[m]},P_{[m]}}-q^{4}Z_{P_{[m]},P_{[m]}}^{2}
	\end{align}
\end{Lem}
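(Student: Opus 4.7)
The plan is to derive both identities as instances of the common-submatrix Laplace expansion (Muir's identity, Proposition 2.6), applied with the specific data $I = J = P_{[m+1]} = P_{[m-1]} \cup \{k, k+1\}$ and $F = G$ chosen so that $I_F = J_G = P_{[m-1]}$. Under this choice, $r = 2$ with $I^F = J^G = \{k, k+1\}$, and $K, K'$ range over $\binom{[2]}{l}$ for $l \in \{0, 1, 2\}$. Crucially, $P_{[m]} = I_F \cup \{k\}$, $\tau(P_{[m]}) = I_F \cup \{k+1\}$, and $P_{[m+1]} = I_F \cup \{k, k+1\}$ are exactly the quantum minors that can appear in the expansion, so the resulting relation lives in the subalgebra generated by the six products on the two sides of the identities.

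To prove the first identity, I would apply \eqref{EqMuirBr} with the cross choice $K = \{1\}$, $K' = \{2\}$. The prefactor $\delta_{K, K'}$ annihilates the left-hand side, so one obtains an identity where the right-hand side (a sum over $A, B, C, D$ and $P \in \binom{[2]}{1}$) must vanish. The two values of $P$ contribute with the prefactors $(-q)^{1-1} = 1$ and $(-q)^{2-1} = -q$. Using \eqref{EqCondR} together with the hypothesis $\tau(P_{[m-1]}) = P_{[m-1]}$ to restrict the enumeration of $(A, B, C, D)$, the non-zero summands collapse onto the six products $Z_{P_{[m]}, P_{[m]}}^{2}$, $Z_{P_{[m]}, P_{[m]}} Z_{\tau(P_{[m]}), \tau(P_{[m]})}$, $Z_{\tau(P_{[m]}), \tau(P_{[m]})} Z_{P_{[m]}, P_{[m]}}$, $Z_{P_{[m+1]}, P_{[m+1]}} Z_{P_{[m-1]}, P_{[m-1]}}$, $Z_{P_{[m]}, \tau(P_{[m]})} Z_{\tau(P_{[m]}), P_{[m]}}$, and $Z_{\tau(P_{[m]}), P_{[m]}} Z_{P_{[m]}, \tau(P_{[m]})}$. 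The $q$-coefficients come from the explicit evaluations $\hat{R}^{II}_{I'I'} = q^{-|I \cap I'|}$ and $(\hat{R}^{-1})^{II}_{I'I'} = q^{|I \cap I'|}$, and the resulting relation rearranges into the first identity.

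For the second identity I would apply \eqref{EqMuirBr2} with the same data. The key point is that in \eqref{EqMuirBr2} the roles of $K$ and $P$ are interchanged between the two $\hat{R}$ factors compared to \eqref{EqMuirBr}, which has the effect of reversing the order of the two quantum minors in each summand of the right-hand side. Consequently, the same cross choice $K = \{1\}$, $K' = \{2\}$ now produces $Z_{\tau(P_{[m]}), P_{[m]}} Z_{P_{[m]}, \tau(P_{[m]})}$ rather than $Z_{P_{[m]}, \tau(P_{[m]})} Z_{\tau(P_{[m]}), P_{[m]}}$ as the ``LHS-side'' of the derived relation. The swap of $\hat{R}$ and $\hat{R}^{-1}$ roles at the diagonal contributions is what produces the extra global factor of $q^{2}$, and the $q^{4}$ on the pure $Z_{P_{[m]}, P_{[m]}}^{2}$ term, in the second identity.

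The main obstacle is the bookkeeping of which $(A, B, C, D)$ quadruples contribute non-trivially, and with what $q$-power. Although \eqref{EqCondR} together with the partial order $\preceq$ is quite restrictive, several non-diagonal $(A, B, C, D)$ combinations do contribute for each $P \in \binom{[2]}{1}$, and these off-diagonal contributions are precisely what produce both the ``mixed'' minor $Z_{P_{[m]}, \tau(P_{[m]})} Z_{\tau(P_{[m]}), P_{[m]}}$ and the quantum correction $-Z_{P_{[m]}, P_{[m]}}^{2}$ (respectively $-q^{4} Z_{P_{[m]}, P_{[m]}}^{2}$ for the second identity). Verifying that the off-diagonal $q$-powers assemble to exactly $-q^{2}$ in front of $Z_{P_{[m+1]},P_{[m+1]}} Z_{P_{[m-1]}, P_{[m-1]}}$ and to $q^{2}-1$ in front of $Z_{P_{[m]}, P_{[m]}}^{2}$ (rather than merely matching up to an overall $q$-power) is the main technical step. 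A secondary subtlety is ensuring that no quadruples $(A, B, C, D)$ involving indices outside $P_{[m+1]}$ contribute, which is precisely where the assumption $\tau(P_{[m-1]}) = P_{[m-1]}$ is used.
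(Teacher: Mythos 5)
There is a genuine gap, and it is structural rather than a matter of bookkeeping. In \eqref{EqMuirBr} with $I=J=P_{[m+1]}$, $I_F=J_G=P_{[m-1]}$, $r=2$ and $|K|=l=1$, every term on the right-hand side has the form $Z_{A,C}Z_{D,J_G\cup(J^G)^P}$ with $|A|=|C|=|I_F\cup(I^F)_K|=m$ and $|D|=|J_G\cup(J^G)^P|=m$: the right-hand side is a sum of products of two rank-$m$ minors only. The product $Z_{P_{[m+1]},P_{[m+1]}}Z_{P_{[m-1]},P_{[m-1]}}$ of a rank-$(m+1)$ with a rank-$(m-1)$ minor lives exclusively on the \emph{left}-hand side of \eqref{EqMuirBr}. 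By taking the cross choice $K=\{1\}$, $K'=\{2\}$ you set $\delta_{K,K'}=0$ and discard precisely that side, so the relation you obtain can never contain the term $-q^{2}Z_{P_{[m+1]},P_{[m+1]}}Z_{P_{[m-1]},P_{[m-1]}}$ that appears in both identities of the lemma; your claim that the nonzero summands ``collapse onto'' a list including this product is not possible. The same objection applies verbatim to your use of \eqref{EqMuirBr2} for the second identity (and, as a secondary point, \eqref{EqMuirBr2} does not reverse the order of the two minors in each summand --- both expansions produce $Z_{A,C}$ on the left of the product; only the placement of $K$ versus the summed index $P$ inside the $\hat R$-labels is exchanged). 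What the cross choice does give you is a genuine but weaker relation among the rank-$m$ products, essentially a commutation identity.

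The repair is to take the \emph{diagonal} choice $K=K'=\{1\}$, which is what the paper does: the left-hand side then survives and reduces to $Z_{P_{[m+1]},P_{[m+1]}}Z_{P_{[m-1]},P_{[m-1]}}$, while the sum over $P\in\{\{1\},\{2\}\}$ on the right produces the four rank-$m$ products $Z_{\tau,P_{[m]}}Z_{P_{[m]},\tau}$, $Z_{P_{[m]},\tau}Z_{\tau,P_{[m]}}$, $Z_{\tau,\tau}Z_{P_{[m]},P_{[m]}}$ and $Z_{P_{[m]},P_{[m]}}^{2}$ with explicit coefficients involving $\hat R^{\tau,P_{[m]}}_{\tau,P_{[m]}}=q^{-m}-q^{2-m}$ and $(\hat R^{-1})^{\tau,P_{[m]}}_{\tau,P_{[m]}}=q^{m}-q^{m-2}$. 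One then needs a second, independent ingredient --- the general commutation relation \eqref{EqGenCommRel} with $I=I'=P_{[m]}$, $J=J'=\tau(P_{[m]})$ --- to exchange $Z_{\tau,P_{[m]}}Z_{P_{[m]},\tau}$ and $Z_{P_{[m]},\tau}Z_{\tau,P_{[m]}}$; substituting that into the Muir relation and solving yields the two displayed identities. Your proposal is missing both the surviving left-hand side and this second relation, so as written it cannot close.
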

\begin{proof}
	Consider equation \eqref{EqMuirBr} with $I=J=P_{[m+1]}$, $K=K^{\prime}=1$, and $F=G=[m-1]$ such that $I_{F}=P_{[m-1]}$, $I^{F}=\{k,k+1\}$, $(I^{F})_{K}=k$. Then, denoting $\tau:=\tau(P_{[m]})$ for short, we have:
	\begin{align*} 
		Z_{P_{[m+1]},P_{[m+1]}}Z_{P_{[m-1]},P_{[m-1]}}
=&q^{-m}(\widehat{R}^{-1})^{\tau,P_{[m]}}_{\tau,P_{[m]}} Z_{\tau,P_{[m]}}Z_{P_{[m]},\tau}+\left(1-q^{2-m}(\widehat{R}^{-1})^{\tau,P_{[m]}}_{\tau,P_{[m]}}\right)  Z_{\tau,\tau} Z_{P_{[m]},P_{[m]}}\\
&-q^{m} \widehat{R}^{\tau,P_{[m]}}_{\tau,P_{[m]}} Z_{P_{[m]},P_{[m]}}^2- Z_{P_{[m]},\tau} Z_{\tau,P_{[m]}}
	\end{align*}
	Next, using equation \eqref{EqGenCommRel} with $J=J^{\prime}=\tau(P_{[m]})$, $I=I^{\prime}=P_{[m]}$, we get:
	\begin{align*}
		Z_{\tau,P_{[m]}}Z_{P_{[m]},\tau}=&q^{m}\widehat{R}^{\tau,P_{[m]}}_{\tau,P_{[m]}}Z_{P_{[m]},P_{[m]}}^{2}+Z_{P_{[m]},\tau}Z_{\tau,P_{[m]}}-q^{m}\widehat{R}^{\tau,P_{[m]}}_{\tau,P_{[m]}}Z_{P_{[m]},P_{[m]}}Z_{\tau\tau}
	\end{align*}
	By using equation (2.9) of \cite{DCMo24b}, with  $I=J=P_{[m]}$, $I^{\prime}=J^{\prime}=\tau(P_{[m]})$, (or $I=J=\tau(P_{[m]})$, $I^{\prime}=J^{\prime}=P_{[m]}$ for the inverse), then quotienting $\mcO_q(U(N))\rightarrow\mcO_q(SU(2))$, $X_{kk}\mapsto X_{11}$, we find that 
	\[
	\widehat{R}^{\tau,P_{[m]}}_{\tau,P_{[m]}}=q^{-m}-q^{2-m}, ~ (\widehat{R}^{-1})^{\tau,P_{[m]}}_{\tau,P_{[m]}}=q^{m}-q^{m-2}.\]	
	Combining everything together and simplifying we get the result.
\end{proof}
We can use the previous lemma to find the highest weight of non-diagonal cases as follows:
\begin{Prop}\label{prop: highest weight diag}
	Let $V$ be an irreducible representation with shape such that $\tau(k)=k+1$, with $p_{m}=k$, and $\tau(P_{[m-1]})=P_{[m-1]}$. Let $v_{0}$ be the highest weight vector of $V$, and $w_{m-1},w_{m+1},w_{\tau}$ denote the values of $Z_{P_{[m-1]},P_{[m-1]}}$, $Z_{P_{[m+1]},P_{[m+1]}}$, $Z_{\tau(P_{[m]}),\tau(P_{[m]})}$ on $v_{0}$ respectively. Then the possible highest weights of $\alpha_{k}(Z_{P_{[m]},P_{[m]}})$ on $\alpha_{k}V$ are given as follows:
	\begin{align}
		\frac{1}{2}\left(w_{\tau}\pm\sqrt{w_{\tau}^{2}-4q^{2}w_{m+1}w_{m-1}}\right)
	\end{align}
\end{Prop}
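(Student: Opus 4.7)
The plan is to pin down the eigenvalue $\lambda$ of $\pi'(Z_{P_{[m]},P_{[m]}})$ on a highest weight vector $v_0'$ of an irreducible summand of $\alpha_k V$, where $\pi'=(\pi\otimes\id)\circ\alpha_k$, by combining two scalar relations involving $\lambda$ and the eigenvalue $\mu$ of $\pi'(Z_{\tau(P_{[m]}),\tau(P_{[m]})})$ on $v_0'$. By Theorem \ref{theorem: Uq coaction} the new shape $S'$ has $Z_{S',m}=Z_{P_{[m]},P_{[m]}}$ and $Z_{S',m+1}=Z_{P_{[m+1]}}$, and the minors $Z_{P_{[m-1]}}$, $Z_{P_{[m+1]}}$ are $\alpha_k$-invariant (their indexing sets meet $\{k,k+1\}$ only in $\emptyset$ or $\{k,k+1\}$, where the $\mcO_q(SU(2))$-quantum determinant is $1$), so they act as the scalars $w_{m-1}$, $w_{m+1}$ on any vector $v_0\otimes\xi$.

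For the first relation, Lemma \ref{LemqCommpi} applied to the new shape $S'$ shows $Z_{\tau(P_{[m]}),P_{[m]}}$ $q$-commutes with $Z_{S',m}$ with exponent $-2$ while commuting with every other $Z_{S',j}$ and their restricted versions, hence acts as a raising operator, giving $\pi'(Z_{\tau(P_{[m]}),P_{[m]}})v_0'=0$. Applying the first identity of Lemma \ref{lemma: gen relation} on $v_0'$, with the left-hand side vanishing, yields $(q^2-1)\lambda^2+\mu\lambda=q^2 w_{m+1}w_{m-1}$.

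For the second relation, the crucial identity is the $\alpha_k$-invariance
\[
\alpha_k\bigl(q^2 Z_{P_{[m]},P_{[m]}}+Z_{\tau(P_{[m]}),\tau(P_{[m]})}\bigr)=\bigl(q^2 Z_{P_{[m]},P_{[m]}}+Z_{\tau(P_{[m]}),\tau(P_{[m]})}\bigr)\otimes 1,
\]
which I would verify by expanding both coactions via \eqref{equation: SU2 coaction} and simplifying with the $\mcO_q(SU(2))$ identities $a^*a+c^*c=1$, $aa^*+q^2cc^*=1$, and $ca^*=qa^*c$ (with $a=X_{kk}$, $c=X_{k+1,k}$): the two diagonal coefficients collapse to $q^2$ and $1$, and the two off-diagonal ones vanish. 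By Lemma \ref{LemqCommpi}, both $Z_{P_{[m]},P_{[m]}}$ and $Z_{\tau(P_{[m]}),\tau(P_{[m]})}$ commute with every $Z_{S,j}$ with $j\ne m$ and with their restrictions; since these minors are also $\alpha_k$-invariant, a comparison of weights combined with uniqueness of the highest weight in $V$ (Theorem \ref{theorem: unique highest weight}) confines $v_0'$ to $v_0\otimes\ell^2(\N)$, where the invariant combination acts as $w_\tau$, giving $q^2\lambda+\mu=w_\tau$. Eliminating $\mu$ between the two relations produces $\lambda^2-w_\tau\lambda+q^2 w_{m+1}w_{m-1}=0$, and the quadratic formula recovers the two displayed values, one for each of the two irreducible summands from Theorem \ref{theorem: Uq coaction}. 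The main obstacle is recognizing the correct $q$-weighted invariant combination: Lemma \ref{lemma: gen relation} alone gives only one equation linking $\lambda$ and $\mu$, and without a second identity of precisely this form one cannot close the system into the symmetric quadratic.
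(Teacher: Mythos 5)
Your proposal is correct and follows essentially the same route as the paper: both rest on Lemma \ref{lemma: gen relation}, the $\alpha_k$-coinvariance of $Z_{P_{[m\pm1]},P_{[m\pm1]}}$ and of the combination $q^2Z_{P_{[m]},P_{[m]}}+Z_{\tau(P_{[m]}),\tau(P_{[m]})}$, and the vanishing of $\alpha_k(Z_{\tau(P_{[m]}),P_{[m]}})$ on the highest weight vector, yielding the same quadratic $x^2-w_\tau x+q^2w_{m+1}w_{m-1}=0$. The only difference is cosmetic: you introduce the auxiliary eigenvalue $\mu$ and eliminate it, whereas the paper substitutes the coinvariant combination directly inside the product $(q^2Z+Z_\tau)Z$.
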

\begin{proof}
	The highest weight vector for $\alpha_{k}V$ must be of the form $v_{0}\otimes L_{0}$, where $L_{0}$ is an element of the $\mathcal{O}_{q}(SU(2))$ representation. Let
	\[
	\alpha_{k}(Z_{P_{[m]},P_{[m]}})(v_{0}\otimes L_{0}) = xv_{0}\otimes L_{0}\]
	Note that  
	\[
	\alpha_{k}(Z_{P_{[m+1]},P_{[m+1]}})=Z_{P_{[m+1]},P_{[m+1]}}\otimes 1, ~ \alpha_{k}(Z_{P_{[m-1]},P_{[m-1]}})=Z_{P_{[m-1]},P_{[m-1]}}\otimes 1\]
	\[\alpha_{k}(q^{2}Z_{P_{[m]},P_{[m]}}+Z_{\tau(P_{[m]}),\tau(P_{[m]})})=(q^{2}Z_{P_{[m]},P_{[m]}}+Z_{\tau(P_{[m]}),\tau(P_{[m]})})\otimes 1, \]
	as well as that $\alpha_{k}(Z_{\tau(P_{[m]}),P_{[m]}})$ will act as zero on a highest weight vector. Then applying $\alpha_{k}$ to Lemma \ref{lemma: gen relation}, we get:
	\begin{align*}
		0=&q^{2}\left(Z_{P_{[m+1]},P_{[m+1]}}Z_{P_{[m-1]},P_{[m-1]}}v_{0}\right)\otimes L_{0}-\alpha_{k}(Z_{P_{[m]},P_{[m]}})\left(Z_{\tau(P_{[m]}),\tau(P_{[m]})}v_{0}\right)\otimes L_{0}\\
		&+\alpha_{k}(Z_{P_{[m]},P_{[m]}}^{2})v_{0}\otimes L_{0}
	\end{align*}
	Hence we have:
	\begin{align*}
		q^{2}w_{m+1}w_{m-1}-w_{\tau}x+x^{2}&=0
	\end{align*}
	Solving for $x$ we get the possible highest weights of $\alpha_{k}(Z_{P_{[m]},P_{[m]}})$.
\end{proof}
We now need to show that both highest weight solutions appear in $\alpha_{k}V$. 
\begin{Prop}\label{prop: diagonalize tau(k)}
	If an irreducible $V$ has a shape with $\tau(k)=k+1$, $p_{m}=k$, and $\tau(P_{[m-1]})=P_{[m-1]}$, then $\alpha_{k}V$ contains two highest weight vectors corresponding to the two possible highest weights for $\alpha_{k}(Z_{P_{[m]},P_{[m]}})$.
\end{Prop}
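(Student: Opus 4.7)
First I note that since $(P_{[m]},P_{[m]}) <_{\mathrm{lex}} (P_{[m]},\tau(P_{[m]}))$ we have $Z_{P_{[m]},P_{[m]}} \in I^{\leq}_S$, and hence $w_m := Z_{P_{[m]},P_{[m]}}v_0/v_0 = 0$. Evaluating Lemma \ref{lemma: gen relation} on $v_0$ then yields $|\lambda_m|^2 = -q^2 w_{m+1}w_{m-1}$, so $q^2 w_{m+1}w_{m-1} < 0$. Consequently the quadratic $x^2 - w_\tau x + q^2 w_{m+1}w_{m-1} = 0$ from Proposition \ref{prop: highest weight diag} has strictly positive discriminant, and its two roots $x_+ > 0 > x_-$ are real, distinct, and of opposite sign.

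Next I restrict $A := \alpha_k(Z_{P_{[m]},P_{[m]}})$ to the subspace $W := v_0 \otimes \ell^2(\N) \subset \alpha_k V$. From \eqref{equation: SU2 coaction}, each of the four minors $Z_{I'J'}$ with $I',J' \in \{P_{[m]}, \tau(P_{[m]})\}$ arising in $\alpha_k(Z_{P_{[m]},P_{[m]}})$ acts on $v_0$ as a scalar ($0$, $\lambda_m$, $\overline{\lambda_m}$, or $w_\tau$), so $W$ is $A$-invariant and $A|_W$ is the bounded self-adjoint tridiagonal (Jacobi) operator on $\ell^2(\N)$ with diagonal $q^{2i}w_\tau$ and off-diagonals proportional to $q^i\lambda_m$. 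All entries decay like $q^i$, so $A|_W$ is Hilbert--Schmidt, in particular compact. Applying $\alpha_k$ to Lemma \ref{lemma: gen relation} and using the coinvariance
\begin{equation*}
\alpha_k\bigl(q^2 Z_{P_{[m]},P_{[m]}} + Z_{\tau(P_{[m]}),\tau(P_{[m]})}\bigr) = \bigl(q^2 Z_{P_{[m]},P_{[m]}} + Z_{\tau(P_{[m]}),\tau(P_{[m]})}\bigr) \otimes 1
\end{equation*}
together with $w_m = 0$, gives on $W$ the operator identity
\begin{equation*}
\alpha_k(Z_{\tau(P_{[m]}),P_{[m]}})^*\,\alpha_k(Z_{\tau(P_{[m]}),P_{[m]}})\big|_W = -(A - x_+I)(A - x_-I).
\end{equation*}
The left-hand side is positive semi-definite, so $\sigma(A|_W) \subseteq [x_-, x_+]$, and the kernel of $\alpha_k(Z_{\tau(P_{[m]}),P_{[m]}})|_W$ coincides with the sum of the $x_\pm$-eigenspaces of $A|_W$; by Proposition \ref{prop: highest weight diag} every highest weight vector of $\alpha_k V$ lies in $W$ and inside this kernel.

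To produce a highest weight vector for each root, I analyse the three-term recursion that $AL = xL$ induces on the coefficient sequence $\phi_j$ with boundary datum $\phi_0 = 1$:
\begin{equation*}
\phi_{j+1} = \frac{(x - q^{2j}w_\tau)\phi_j - q^{j-1}\sqrt{1-q^{2j}}\,\overline{\lambda_m}\,\phi_{j-1}}{q^j\sqrt{1-q^{2(j+1)}}\,\lambda_m}.
\end{equation*}
An asymptotic analysis using the $q^j$-decay of the coefficients (matching a basic hypergeometric recursion of little $q$-Jacobi type) shows that for generic $x$ this solution grows geometrically, while at each $x = x_\pm$ the boundary condition selects the unique square-summable solution $L_0^\pm \in \ell^2(\N)$. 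Thus $v_0 \otimes L_0^\pm$ are two linearly independent highest weight vectors of $\alpha_k V$ with the required eigenvalues. The positivity condition of the shape definition combined with $x_+ > 0 > x_-$ then identifies $v_0 \otimes L_0^+$ with the new shape whose $(k,k+1)$ sub-matrix is $\diag(+1,-1)$ and $v_0 \otimes L_0^-$ with $\diag(-1,+1)$, completing both Proposition \ref{prop: diagonalize tau(k)} and the outstanding case of Theorem \ref{theorem: Uq coaction}.

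The main obstacle is the square-summability of the recursive solutions at $x = x_\pm$. Spectral confinement in $[x_-, x_+]$ together with compactness of $A|_W$ guarantees discrete eigenvalues in that interval accumulating only at zero, but does not \emph{a priori} force both endpoints to be attained. A rigorous verification requires either identifying $A|_W$ with a known orthogonal polynomial family (the $q$-hypergeometric structure of the Jacobi coefficients above being characteristic of little $q$-Jacobi / $q$-Meixner weights) or a reduction via the weak containment of $V$ in big cell representations established in \cite{DCMo24a}, where the $\alpha_k$-decomposition can be read off from the explicit $U_q(\mfu(N))^{\mathrm{cop}}$ highest weight theory of Theorem \ref{theorem: highest weights of TN} and both shapes $\diag(\pm 1, \mp 1)$ manifestly appear.
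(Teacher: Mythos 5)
Your spectral setup is sound and in places sharper than the paper's own bookkeeping: $(P_{[m]},P_{[m]})<(P_{[m]},\tau(P_{[m]}))$ does put $Z_{P_{[m]},P_{[m]}}$ in $I^{\leq}_S$, so $w_m=0$; evaluating Lemma \ref{lemma: gen relation} on $v_0$ does give $|f|^2=-q^2w_{m+1}w_{m-1}>0$ (with $f$ the eigenvalue of $Z_{S,m}$ on $v_0$), so the two roots $x_\pm$ are real and of opposite sign; and the factorisation $\alpha_k(Z_{\tau(P_{[m]}),P_{[m]}})^*\alpha_k(Z_{\tau(P_{[m]}),P_{[m]}})\big|_W=-(A-x_+)(A-x_-)$ is a correct consequence of applying the $*$-homomorphism $\alpha_k$ to Lemma \ref{lemma: gen relation}. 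However, the proof has a genuine gap exactly where you flag it: nothing you prove shows that $x_+$ \emph{and} $x_-$ belong to the point spectrum of $A|_W$. Confinement of $\sigma(A|_W)$ to $[x_-,x_+]$ plus compactness only says the nonzero spectrum consists of eigenvalues; it does not force either endpoint to be attained, and the existence of at least one highest weight vector in $\alpha_kV$ (Theorem \ref{theorem: unique highest weight} via boundedness) only yields that \emph{one} of the two roots is an eigenvalue. Since "both roots occur" is precisely the assertion of the proposition, the argument is incomplete, and neither proposed repair is carried out --- the weak-containment route in particular controls the spectrum of a C$^*$-envelope, not which irreducibles occur in the decomposition of $\alpha_kV$ for this specific $V$.

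The missing step is closed in the paper by an elementary computation for which you already have every ingredient. A highest weight vector $v_0\otimes L_0$ must satisfy \emph{two} conditions simultaneously: $A(v_0\otimes L_0)=x(v_0\otimes L_0)$ and $\alpha_k(Z_{\tau(P_{[m]}),P_{[m]}})(v_0\otimes L_0)=0$ (you note the second only as membership in $\ker B$, but never use it as an equation on the coefficients). Each condition is a three-term recursion for the coefficients $\lambda_i$ of $L_0$; eliminating $\lambda_{i+2}$ between them collapses the pair to the first-order recursion $\lambda_{i+1}=\bar f c_i\lambda_i/(x a_{i+1})$ with $c_i=q^i$ and $a_{i+1}=\sqrt{1-q^{2i+2}}$. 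For either root $x=x_\pm$ the resulting sequence decays like $q^{i(i-1)/2}$ up to a geometric factor, hence is manifestly in $l^2(\N)$, and the two choices of $x$ give linearly independent vectors. This replaces the delicate connection-coefficient analysis of the single Jacobi recursion --- the step you could not complete --- by a triviality. If you prefer to stay within your operator-theoretic framework, the same information can be phrased as: $\ker B|_W$ is two-dimensional as a space of formal solutions and both basis solutions are square-summable, so $\ker(A|_W-x_+)$ and $\ker(A|_W-x_-)$ are each nonzero; but some version of this explicit recursion seems unavoidable.
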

\begin{proof}
	Recall the $\mathcal{O}_{q}(SU(2))$ representation $\mbs$. For simplicity, we denote its action by
	\[
	X_{11}e_{n}=a_{n}e_{n-1}, ~ X_{12}e_{n}=b_{n}e_{n}, ~ X_{21}e_{n}=c_{n}e_{n}, ~ X_{22}e_{n}=d_{n}e_{n+1}.\] 
	Let $v_{0}$ denote the highest weight vector of $V$, and put 
	\[
	Z_{\tau(P_{[m]}),P_{[m]}}v_{0}=fv_{0}, \qquad Z_{\tau(P_{[m]}),\tau(P_{[m]})}v_{0}=w_{\tau}v_{0}, \qquad Z_{P_{[m+1]},P_{[m+1]}}Z_{P_{[m-1]},P_{[m-1]}}v_{0}=w_{m+1}w_{m-1}v_{0}. \] 
	The highest weight vector for $\alpha_{k}V$ must be of the form $v_{0}\otimes L_{0}$, where
	\[
	L_{0}=\sum\limits_{i=0}\lambda_{i}e_{i}\]
	From Proposition \ref{prop: highest weight diag}, we know that
	\[
	\alpha_{k}(Z_{P_{[m]},P_{[m]}})(v_{0}\otimes L_{0})=xv_{0}\otimes L_{0}, \qquad x = \frac{1}{2}\left(w_{\tau}\pm\sqrt{w_{\tau}^{2}-4q^{2}w_{m+1}w_{m-1}}\right)\] 
	However, we also have
	\[
	\alpha_{k}(Z_{P_{[m]},P_{[m]}}) = Z_{\tau(P_{[m]}),P_{[m]}}\otimes X_{21}^{*}X_{11}+Z_{P_{[m]},\tau(P_{[m]})}\otimes X^{*}_{11}X_{21}+Z_{\tau(P_{[m]}),\tau(P_{[m]})}\otimes X^{*}_{21}X_{21}\]
	which gives
	\[
	\alpha_{k}(Z_{P_{[m]},P_{[m]}})(v_{0}\otimes L_{0})=\sum\limits_{i}\left(\bar{f}c_{i-1}d_{i-1}\lambda_{i-1}-q^{-1}fb_{i}a_{i+1}\lambda_{i+1}-q^{-1}w_{\tau}b_{i}c_{i}\lambda_{i}\right)v_{0}\otimes e_{i}.\]
	Hence we require
	\[
	\lambda_{i+2} = \frac{q\bar{f}c_{i}d_{i}\lambda_{i}-w_{\tau}b_{i+1}c_{i+1}\lambda_{i+1}-qx\lambda_{i+1}}{fb_{i+1}a_{i+2}}.\]
	For $v_{0}\otimes L_{0}$ to be a highest weight vector, we will also need $\alpha_{k}(Z_{\tau(P_{[m]}),P_{[m]}})$ to act as zero. We have
	\[
	\alpha_{k}(Z_{\tau(P_{[m]}),P_{[m]}})v_{0}\otimes L_{0} = \sum\limits_{i=0}\left(fa_{i+2}a_{i+1}\lambda_{i+2}-q\bar{f}c_{i}^{2}\lambda_{i}+w_{\tau}c_{i+1}a_{i+1}\lambda_{i+1}\right)v_{0}\otimes e_{i}.\]
	Then we also require
	\[
	\lambda_{i+2}= \frac{q\bar{f}c_{i}^{2}\lambda_{i}-w_{\tau}c_{i+1}a_{i+1}\lambda_{i+1}}{fa_{i+1}a_{i+2}}.\]
	Combining this with the previous formula for $\lambda_{i+2}$, we get
	\[
	\lambda_{i+1} = \frac{\bar{f}c_{i}\lambda_{i}}{xa_{i+1}}.
	\]
	It follows that the two solutions for $x$ will give two linearly independent highest weight vectors $v_{0}\otimes L_{0}^{\pm}$, which both appear in $\alpha_{k}V$. By Theorem \ref{theorem: unique highest weight}, we know each irreducible has a unique highest weight vector, and so an irreducible corresponding to each solution appears in $\alpha_{k}V$.
\end{proof}

\begin{Rem}
	We note that one solution for $\alpha_{k}(Z_{P_{[m]},P_{[m]}})$ will be positive, and the other negative, so in terms of shapes, the two solutions will have the $(k,k+1)$ submatrix differ by a sign, as described in Theorem \ref{theorem: Uq coaction}.
\end{Rem}

Using similar techniques, it can also be seen that no other shapes appear via the coaction:
\begin{Prop}
If an irreducible $\mcO_q(H(N))$ representation $V$ has shape $S$ such that $\tau(k)=k+1$ with $p_m=k$, and $\tau(P_{[m-1]})=P_{[m-1]}$, then the only shapes that appear in $\alpha_{k}S$ are those as described by Theorem \ref{theorem: Uq coaction}.
\end{Prop}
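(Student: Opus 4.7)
The approach is to reduce the possible shapes of an irreducible component $W\subseteq\alpha_{k}V$ to the two diagonal forms of Theorem \ref{theorem: Uq coaction} by analysing the $(k,k+1)$-block of the shape matrix in three stages.

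First, from equation \eqref{equation: SU2 coaction} a quantum minor $Z_{I,J}$ is $\alpha_{k}$-invariant whenever $\{k,k+1\}\cap(I\cup J)=\emptyset$ or $\{k,k+1\}\subseteq I\cap J$ (using the quantum determinant identity in the second case). Hence the shape of $W$ must agree with $S$ on every row and column outside $\{k,k+1\}$, and $Z_{P_{[m+1]},P_{[m+1]}}$ continues to act by the non-zero scalar $w_{m+1}$. Together with self-adjointness (Corollary \ref{cor: self adjoint}) this restricts the $(k,k+1)$-block of $W$'s shape to one of the four diagonal sign combinations $\text{Diag}\{\pm 1,\pm 1\}$ or the off-diagonal form $\begin{pmatrix}0 & u\\ \bar u & 0\end{pmatrix}$ inherited from $S$.

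Next, I rule out the off-diagonal case. Suppose $\tau'(k)=k+1$, so that the rank-$m$ shape element for $W$ is $B:=\alpha_{k}(Z_{\tau(P_{[m]}),P_{[m]}})$; the commutation relation \eqref{EqqCommQuot} gives $BB^{*}=B^{*}B$ on $W$, so its highest weight vector $w$ satisfies $Bw=bw$, $B^{*}w=\bar bw$, and $Aw=aw$ with $A:=\alpha_{k}(Z_{P_{[m]},P_{[m]}})$. Applying $\alpha_{k}$ to both identities of Lemma \ref{lemma: gen relation} (and using the $\alpha_{k}$-invariance of $Z_{P_{[m\pm 1]},P_{[m\pm 1]}}$ and of $q^{2}Z_{P_{[m]},P_{[m]}}+Z_{\tau(P_{[m]}),\tau(P_{[m]})}$), the identity $BB^{*}w-B^{*}Bw=0$ reduces to $a\bigl(w_{\tau}-(1+q^{2})a\bigr)=0$, so $a\in\{0,\,w_{\tau}/(1+q^{2})\}$. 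Decomposing $V=\bigoplus_{\mu}V_{\mu}$ into weight spaces for the action of $B$ on $V$ (with $V_{f}=\C v_0$) and writing $w=\sum_\mu v_\mu\otimes L_\mu$, an expansion of $Aw=aw$, $Bw=bw$ and $B^*w=\bar b w$ via \eqref{equation: SU2 coaction} produces, for each $L_\mu$, a pair of coupled three-term recursions in its coefficients; a direct analysis parallel to Proposition \ref{prop: diagonalize tau(k)} (but imposing $Bw=bw$ instead of $Bw=0$) shows that these are overdetermined and admit no non-zero $\ell^{2}(\N)$-solution for either admissible value of $a$.

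Finally, for the diagonal shapes, Proposition \ref{prop: highest weight diag} limits the eigenvalue of $A$ on a highest weight vector to the two roots $x_{\pm}$ of $x^{2}-w_{\tau}x+q^{2}w_{m+1}w_{m-1}=0$; these are of opposite signs because $|f|^{2}=-q^{2}w_{m+1}w_{m-1}>0$ (obtained by applying Lemma \ref{lemma: gen relation} to $v_{0}$). The shape positivity conditions at ranks $m$ and $m+1$ in $W$ (where $l(\tau'_{|P'_{[m+1]}})=0$), combined with the positivity at rank $m+1$ in $S$ (where $l(\tau_{|P_{[m+1]}})=1$ because $\tau(k)=k+1$), then force $u'(k)\,u'(k+1)=-1$, so the two shapes are precisely $\text{Diag}\{+1,-1\}$ and $\text{Diag}\{-1,+1\}$, matching Theorem \ref{theorem: Uq coaction}. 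The main obstacle is the second stage: ruling out non-trivial $\ell^{2}$-solutions to the coupled recursions requires tracking the interplay of the $A$- and $B$-recursions across the full weight decomposition $V=\bigoplus_\mu V_\mu$, rather than merely on the $v_{0}$-component, and both admissible values $a\in\{0,w_{\tau}/(1+q^{2})\}$ must be treated independently since the commutativity of $B$ and $B^{*}$ does not automatically preclude either.
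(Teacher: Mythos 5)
Your overall structure is right: the only candidate shapes for an irreducible component of $\alpha_k V$ are the two diagonal ones and $S$ itself, so the whole content of the proposition is to rule out $S$. But that is exactly the step you do not actually carry out. You assert that the coupled recursions arising from $Aw=aw$, $Bw=bw$, $B^*w=\bar b w$ are ``overdetermined and admit no non-zero $\ell^2(\N)$-solution,'' and you yourself flag this as the main unresolved obstacle. As stated this is a gap, and in fact the claimed mechanism is not quite the right one: the recursion for the coefficients of a putative highest weight vector $L=\sum_i v\otimes\mu_i e_i$ of shape $S$ \emph{does} have a formal solution --- the obstruction is not non-existence of an $\ell^2$ solution but an eigenvalue mismatch. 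Concretely, the paper's proof reads off from the $i=0$ term of the expansion of $\alpha_k(Z_{P_{[m]},\tau(P_{[m]})})L=yL$ that necessarily $y=-q^{-1}fb_0^2=-qf$, while the $\alpha_k$-invariance of $Z_{P_{[m-1]},P_{[m-1]}}Z_{P_{[m+1]},P_{[m+1]}}$ together with equation \eqref{equation: weight relation 1} (which identifies this product with $-q^{-2}Z_{S,m}Z_{S,m}^*$ up to the quoted normalisation) forces $\alpha_k(Z_{P_{[m]},\tau(P_{[m]})})L=fL$. Since $0<q<1$ and $f\neq 0$, these are incompatible, and no component of shape $S$ can occur. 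You have all the ingredients for this argument in your second stage (you already note the invariance of $Z_{P_{[m\pm1]},P_{[m\pm1]}}$ under $\alpha_k$), but you would need to replace the unproven ``no $\ell^2$ solution'' claim with this explicit eigenvalue computation.

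Two smaller remarks. First, your reduction of $BB^*w-B^*Bw=0$ to $a\bigl(w_\tau-(1+q^2)a\bigr)=0$ implicitly uses that $Z_{\tau(P_{[m]}),\tau(P_{[m]})}$ acts by the scalar $w_\tau$ on every $V$-component $v_\mu$ of $w$, not just on $v_0$; this needs justification (it commutes with the $Z_{S,k}$ but is not a priori scalar on all of $V$). Second, your first and third stages largely reprove what Theorem \ref{theorem: Uq coaction}, Proposition \ref{prop: highest weight diag} and Proposition \ref{prop: diagonalize tau(k)} already establish; the new content of this proposition is precisely the exclusion of $S$, so the effort should be concentrated there.
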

\begin{proof}
The only other shape that could potentially appear in $\alpha_{k}S$ is $S$ itself. Assume some irreducible $W$ of shape $S$ did appear in the decomposition of $\alpha_{k}S$. Then we can assume there is some $v\in V$ such that $W$ is generated by 
\[
L:=\sum\limits_{i=0}^{\infty} v\otimes \mu_{i}e_{i}, \qquad \mu_{i}\in\C.\]
As $W$ is of shape $S$, we must have
\[
\alpha_{k}(Z_{P_[m],\tau(P_{[m]})})L=yL, \qquad y\in\C.\]
Using the same notation as the proof of Proposition \ref{prop: diagonalize tau(k)}, then expanding and solving for $\mu_i$, we get
\[
\sum\limits_{i=0}^{\infty}cv\otimes\mu_{i}e_{i} = \sum\limits_{i=0}^{\infty}v\otimes\left(\mu_{i-2}\bar{f}d_{i-2}d_{i-1}-q^{-1}\mu_{i}fb_{i}^{2}-q^{-1}\mu_{i-1}w_{\tau}b_{i}d_{i-1}\right)e_{i}.\]
In particular, via the $i=0$ term, we get
\[
y = -q^{-1}fb_{0}^2 = -qf.\]
On the other hand, 
\[
\alpha_{k}\left(Z_{P_{[m-1]},P_{[m-1]}}Z_{P_{[m+1]},P_{[m+1]}}\right)=Z_{P_{[m-1]},P_{[m-1]}}Z_{P_{[m+1]},P_{[m+1]}}\otimes 1,\]
so by equation \eqref{equation: weight relation 1}, for $W$ to appear would require
\[
\alpha_{k}(Z_{P_{[m]},\tau(P_{[m]})})L=fL.\]
Hence we have a contradiction and therefore $W$ can not exist in $\alpha_{k}S$.
\end{proof}

\section{Highest weights for a given shape}\label{section: highest weights for shapes}

Given an irreducible representation of a chosen shape, we can use the $\mcO_q(U(N))$ coaction to construct a big cell representation from the irreducible. This then leaves the question of whether we can relate the highest weight of the big cell representation to the highest weight of the starting irreducible.

Before proceeding further, we will need to introduce the following definitions:
\begin{Def}
	Given a shape of rank $M$, let $t_{i}$ denote a cycle in the decomposition of $\tau(P_{[M]})$. We define an ordering on the set of $t_{i}$ by $t_{i}\vartriangleleft t_{j}$ if $Max(t_{i})<Max(t_{j})$ and label the $t_{i}$ accordingly. Given a chosen weight $r\in\mathbb{R}^{M}$, we define a map
	\[ W_{r}:\{t_{i}\}\rightarrow \mathbb{R}^{M}\]
	as follows. Partition $r$ into subsets $\mathfrak{r}_{i}$ with respect to the standard order on $\mathbb{R}^{M}$ such that $\lvert \mathfrak{r}_{i}\rvert=\lvert t_{i}\rvert$. Then we define
	\[
	W_{r}(t_{i})=\sum\limits_{r_{j}\in\mathfrak{r}_{i}} r_{j}.\]	
\end{Def}	
\begin{Def}
	Given a fixed shape of rank $M$, for $I,J\subseteq P_{[M]}$, we define the \textit{closure} of $Z_{IJ}$, denoted $Z_{IJ}^{C}$, to be the element $Z_{KK}$, where	\[K=\bigcup\limits_{t_{i}\trianglelefteq t_{j}} t_{i},\]
	and $t_{j}$ is the maximal choice such that
	\[t_{j}\cap\left(I\cup J\right)\neq\emptyset.\]	
\end{Def}	
\begin{Def}
	Given a shape, take the subset $\mathfrak{T}_{\tau}:=\{t_{i}:\lvert t_{i}\rvert =2\}$. We denote by $W_{\epsilon}$ the set 
	\[
	\{j:W_{r}(t_{i})=r_{j-1}+r_{j}, t_{i}\in \mathfrak{T}_{\tau}\}.\]	
\end{Def}
Finally we will also need the following:
\begin{Def}
	Given $Z_{S,k}$ for a fixed shape, let $\mathfrak{N}_{S,k}$ be the set
	\[
	\{t_{i}:j\in t_{i} \text{ with } j\in P_{[k]},\tau(j)\notin P_{[k]}\}.\]	
	We denote by $\mathfrak{N}_{S,k}^{i}$ the $i$ largest elements of $\mathfrak{N}_{S,k}$. We define the set $\mathfrak{C}_{S,k}=\{\mathfrak{c}_{i}\in\mathbb{N}, i<\lvert\mathfrak{N}_{S,k}\rvert\}$ as follows:
	
	Let $\mathfrak{c}_{0}$ be the rank of $Z_{S,k}^{C}$, and let $\mathfrak{c}_{i}$ be the rank of
	\[\left(Z_{\tau(P_{[k]})\setminus\mathfrak{N}_{S,k}^{i},P_{[k]}\setminus\mathfrak{N}_{S,k}^{i}}\right)^{C}.\]
\end{Def}
\begin{Exa}
	Consider the shape
	\[\tau=(4,5,3,1,2), \qquad u(i)\neq 0\text{ for all }i,\]
	which under the ordering of $\tau-$cycles has
	\[
	\{3\}\vartriangleleft\{1,4\}\vartriangleleft\{2,5\}\]
	The function $W_{r}$, and the set $W_{\epsilon}$ are given by
	\[ W_{r}(3)=r_{1}, \quad W_{r}(\{1,4\})=r_{2}+r_{3}, \quad  W_{r}(\{2,5\})=r_{4}+r_{5}, \quad W_{\epsilon}=\{3,5\}.\]
	The closure of $Z_{S,1}=Z_{41}$ is $Z_{134,134}$, and the closure of $Z_{S,k}$ for $k=2,3$ or $4$ is $Z_{[5]}$. Taking $Z_{S,3}=Z_{345,123}$, then $\mathfrak{c}_{0}=5$ and $\mathfrak{c}_{1}=3$, as $\mathfrak{N}_{S,3}^{1} = \{2,5\}$ and the closure of $Z_{\tau(P_{[3]})\setminus\mathfrak{N}_{S,3}^{1},P_{[3]}\setminus\mathfrak{N}_{S,3}^{1}} = Z_{34,13}$ is $Z_{134,134}$.
\end{Exa}
We will also need the following:
\begin{Lem}
	Given a non-self adjoint $Z_{S,k}$ for some fixed shape, the closure $Z_{S,k}^{C}$ will be some $Z_{S^{\prime},k^{\prime}}$, $k^{\prime}>k$, of either the shape $S$ or its restriction to a smaller $\mcO_q(H(N^{\prime}))$. Let $t_{i}$ be the maximal element of $\mathfrak{R}_{S,k}$. Then we have
	\begin{align}\label{equation: weight relation 2}
		Z_{S,k}^{C}Z_{P_{[k]}\setminus t_{i},\tau(P_{[k]})\setminus t_{i}} &=-(-q)^{k-\mathfrak{c}_{0}-1}Z_{S^{\prime},k^{\prime}-1}Z_{S,k}^{*}
	\end{align} 
\end{Lem}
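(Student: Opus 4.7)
The plan is to derive \eqref{equation: weight relation 2} by applying Muir's common-submatrix Laplace expansion \eqref{EqMuirBr}, generalising the proof of \eqref{equation: weight relation 1}. There the single transposition $\{i,\tau(i)\}$ played the role now taken over by the maximal cycle $t_i \in \mathfrak{N}_{S,k}$ sitting at the top of the closure $\mathcal{C}$ of $Z_{S,k}$. Because $\mathcal{C}$ is a union of $\tau$-cycles and hence $\tau$-stable, the diagonal minor $Z_{\mathcal{C},\mathcal{C}}$ is realised as a top standard step $Z_{S',k'}$ with $k' = |\mathcal{C}| = \mathfrak{c}_0$, either inside the shape $S$ itself when $\mathcal{C}$ is an initial segment of $P$, or inside the restriction of $S$ to a suitable smaller $\mcO_q(H(N'))$.

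Concretely, specialise \eqref{EqMuirBr} by taking $I = J = \mathcal{C}$, $I_F = P_{[k]} \setminus t_i$, $J_G = \tau(P_{[k]}) \setminus t_i$, and $K = K'$ of size $|I^F|-1$ with $(I^F)^{K'} = \{t_i \cap P_{[k]}\}$. Since $\tau$ is an involution (Proposition \ref{proposition: restriction of shapes}), every cycle in $\mathfrak{N}_{S,k}$ is a transposition meeting $P_{[k]}$ in exactly one element, so $|I_F| = |J_G| = k-1$ as required, and $P_{[k]} \subseteq \mathcal{C}$, $\tau(P_{[k]}) \subseteq \mathcal{C}$. On the LHS of \eqref{EqMuirBr} the R-matrix support conditions \eqref{EqCondR}, combined with the ordering argument used in the proof of \eqref{equation: weight relation 1} and the vanishing relations defining $I_S^{\leq}$, force $S = T = \mathcal{C}$ and $H = L = I_F$. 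The surviving R-matrix factors contract to $(\widehat{R}^{-1})^{\mathcal{C},\mathcal{C}}_{I_F,I_F}\widehat{R}^{\mathcal{C},\mathcal{C}}_{I_F,I_F} = q^{|I_F|}q^{-|I_F|} = 1$, leaving the LHS equal to $Z_{S,k}^{C} \cdot Z_{P_{[k]} \setminus t_i,\, \tau(P_{[k]}) \setminus t_i}$.

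On the RHS of \eqref{EqMuirBr} the dominant term is the one with $P$ chosen so that $(J^G)^P = \{t_i \cap \tau(P_{[k]})\}$; R-matrix support and the ordering in $I_S^{\leq}$ then force $D = P_{[k]}$ and $A = C = \mathcal{C} \setminus \{t_i \cap P_{[k]}\}$, giving $Z_{A,C} = Z_{S',k'-1}$ (using $b := t_i \cap \tau(P_{[k]}) = \max \mathcal{C}$) and $Z_{D,\,J_G \cup (J^G)^P} = Z_{S,k}^*$. The coefficient of this dominant term is obtained by combining the R-matrix values $q^{\pm|I \cap I'|}$, which contract to $q^{k-1}\cdot q^{-k} = q^{-1}$, with the Muir weight $(-q)^{\wt(P) - \wt(K)}$. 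Since $\mathcal{C} \setminus P_{[k]} \subseteq P \setminus P_{[k]}$ lies entirely above $p_k$, the element $t_i \cap P_{[k]}$ sits at position $j=1$ inside the sorted $I^F$, while $b$ sits at position $j' = |J^G| = |\mathcal{C}| - (k-1)$ inside the sorted $J^G$; hence $\wt(P) - \wt(K) = j - j' = k - \mathfrak{c}_0$, and the overall factor is $-(-q)^{k-\mathfrak{c}_0-1}$, matching \eqref{equation: weight relation 2}.

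The main obstacle is ruling out the remaining contributions in the Muir $P$-sum. Unlike the single-cycle situation of \eqref{equation: weight relation 1}, where only two values of $P$ appear and the non-dominant one immediately produces the diagonal minor $Z_{P_{[k]},P_{[k]}}$ lying in $I_S^{\leq}$, the smaller cycles inside $\mathcal{C}$ now produce further candidate $P$'s for which neither the $Z_{A,C}$ nor the $Z_{D,\,J_G \cup (J^G)^P}$ factor is annihilated by $I_S^{\leq}$ alone. One expects these extra contributions to cancel pairwise through the REA relations, most naturally by further applications of \eqref{EqMuirBr} and \eqref{EqMuirBr2} relating the competing minors; making this cancellation fully explicit, and tracking how the removal of the maximal cycle $t_i$ interacts with the lexicographic enumeration of the smaller cycles in $\mathcal{C}$, is the combinatorially most delicate step of the proof.
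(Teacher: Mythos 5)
Your setup reproduces the paper's proof exactly: the same specialisation of \eqref{EqMuirBr} with $I=J$ equal to the closure set $\mathcal{C}$, $I_F=P_{[k]}\setminus t_i$, $J_G=\tau(P_{[k]})\setminus t_i$, and $K=K'$ of size $\mathfrak{c}_0-k$ isolating $j=t_i\cap P_{[k]}$ (which is indeed the minimum of $I^F$, so your choice agrees with the paper's $K=\{2,\ldots,\mathfrak{c}_0-k+1\}$). Your identification of the surviving term --- $P=\{1,\ldots,\mathfrak{c}_0-k\}$ so that $(J^G)^P=\{\tau(j)\}=\{\max\mathcal{C}\}$, $Z_{A,C}=Z_{\mathcal{C}\setminus j,\,\mathcal{C}\setminus\tau(j)}=Z_{S',k'-1}$, the second factor equal to $Z_{S,k}^*$, and the coefficient $q^{k-1}\cdot q^{-k}\cdot(-q)^{k-\mathfrak{c}_0}=-(-q)^{k-\mathfrak{c}_0-1}$ --- is precisely the computation the paper performs.

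The step you leave open is, however, the only nontrivial claim in the lemma, and the closing mechanism you conjecture is not the one the paper uses. The paper does not appeal to pairwise cancellations among the non-dominant terms of the $P$-sum via further REA relations; it asserts that every $P\neq\{1,\ldots,\mathfrak{c}_0-k\}$ contributes zero \emph{individually} in $\mcO_q(H(N))/I^{\leq}_S$. The intended argument is of the same kind as the ordering argument you already used on the left-hand side: for such a $P$ one has $(J^G)^P=\{c\}$ with $c<\tau(j)=\max\mathcal{C}$, so the second minor is $Z_{D,\,(\tau(P_{[k]})\setminus\tau(j))\cup\{c\}}$ with column set strictly below $\tau(P_{[k]})$ in the partial order $\preceq$; combined with the support conditions \eqref{EqCondR} constraining $A$, $B$, $C$, $D$, one of the two minor factors is pushed below the relevant threshold $(P_{[k]},\tau(P_{[k]}))$ (or the analogous threshold at the closure rank) and hence lies in $I^{\leq}_S$. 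As written, your proof is incomplete: nothing you state rules out nonzero contributions from the values of $P$ associated with the smaller cycles of $\mathcal{C}$, in particular those cycles meeting neither $P_{[k]}$ nor $\tau(P_{[k]})$, and searching for a pairwise cancellation would send you in the wrong direction.
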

\begin{proof}
	Using equation \eqref{EqMuirBr}, take $I=J$ such that $Z_{S,k}^{C}=Z_{II}$. Then take $F$ such that $I_{F}=P_{[k]}\setminus t_{i}$, and $G$ such that $J_{G}=\tau(P_{[k]})\setminus t_{i}$. Let $j$ be the smaller element of $t_{i}$. Further choose $K=\{2,...,\mathfrak{c}_{0}-k+1\}$. Then we have
	\[
	Z_{II}Z_{P_{[k]}\setminus t_{i},\tau(P_{[k]})\setminus t_{i}}=\]
	\[
	\sum\limits_{P}(-q)^{\text{wt}(P)-\text{wt}(K)}\left(\hat{R}^{-1}\right)^{I\setminus j,I\setminus j}_{P_{[k]},P_{[k]}}\hat{R}^{\tau(P_{[k]})\setminus\tau(j)\cup\{\tau(j),\tau(k+1,...,\tau(j))\}_{P},C}_{P_{[k]},P_{[k]}}Z_{I\setminus j,C}Z_{P_{[k]},\tau(P_{[k]})\setminus\tau(j)\cup\{\tau(j),\tau(k+1,...,\tau(j))\}^{P}}.\]
	The only non-zero choice of $P$ is $P=\{1,...,\mathfrak{c}_{0}-k\}$, which gives the result.
\end{proof}	
We can now relate the highest weights of arbitrary representations to those of big cell representations:
\begin{Theorem}\label{theorem: highest weights of Zsk}
	Given an irreducible representation with fixed shape of rank $M$, the highest weights of $Z_{S,k}$ are given by
	\begin{align}
		\lvert Z_{S,k}\rvert v_{0} &= \left(q^{W_{r}\left(P_{[k]}\cup\tau(P_{[k]})\right)-\lvert\mathfrak{C}_{S,k}\rvert k+\sum\limits_{i\in\mathfrak{C}_{S,k}}(\mathfrak{c}_{i}+i)}\right)v_{0}
	\end{align}
	where $r\in\mathbb{R}^{M}$ is the highest weight of a rank $M$ big cell representation with $\epsilon$ chosen such that $\epsilon_{i}=-1$ for $i\in W_{\epsilon}$.
\end{Theorem}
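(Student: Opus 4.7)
The plan is to reduce the computation to the Harish-Chandra formula for big cell representations (Proposition \ref{PropHC}) by iterating the closure identity \eqref{equation: weight relation 2} to express $\lvert Z_{S,k}\rvert$ on the highest weight vector in terms of eigenvalues of leading diagonal minors, and then transporting those eigenvalues to a big cell representation via the $\mcO_q(U(N))$ coaction of Corollary \ref{coaction to big cell}.

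First I would iterate \eqref{equation: weight relation 2}. Let $t_{1}\in\mathfrak{N}_{S,k}$ be the maximal off-diagonal $\tau$-cycle; the identity expresses $Z_{S,k}^{C}\cdot Z_{P_{[k]}\setminus t_{1},\tau(P_{[k]})\setminus t_{1}}$ as $-(-q)^{k-\mathfrak{c}_{0}-1}Z_{S',k'-1}Z_{S,k}^{*}$. On the highest weight vector everything acts as a scalar, so reading off $\lvert Z_{S,k}\rvert$ produces a factor of $q^{\mathfrak{c}_{0}+1-k}$ together with the eigenvalue ratio $\lvert Z_{S,k}^{C}\rvert\cdot\lvert Z_{P_{[k]}\setminus t_{1},\tau(P_{[k]})\setminus t_{1}}\rvert/\lvert Z_{S',k'-1}\rvert$. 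The smaller minor $Z_{P_{[k]}\setminus t_{1},\tau(P_{[k]})\setminus t_{1}}$ is itself a $Z_{\tilde{S},\tilde{k}}$ for the restricted shape obtained by removing $t_{1}$, with its own set $\mathfrak{N}_{\tilde{S},\tilde{k}}=\{t_{2},t_{3},\ldots\}$ and associated closures $Z_{K_{1},K_{1}}$ of rank $\mathfrak{c}_{1}$, and so on. Iterating $\lvert\mathfrak{N}_{S,k}\rvert$ times reduces the off-diagonal factor to a diagonal leading minor, and the accumulated $q$-prefactors $\mathfrak{c}_{j}+1-k_{j}$ (with $k_j$ the current rank) telescope to $-\lvert\mathfrak{C}_{S,k}\rvert k+\sum_{j\in\mathfrak{C}_{S,k}}(\mathfrak{c}_{j}+j)$.

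Second, I would evaluate each diagonal closure $Z_{K_{j},K_{j}}$ by transporting to the big cell representation produced by Corollary \ref{coaction to big cell}. Theorem \ref{theorem: Uq coaction} ensures that the chain $\alpha_{i_{1}},\ldots,\alpha_{i_{m}}$ restructures only the permutation part of the shape on saturated leading diagonal minors; Proposition \ref{prop: diagonalize tau(k)} shows that for each length-two cycle in $\tau$ there is a choice of positive or negative root, and selecting the negative root at the appropriate step is precisely what is encoded by $\epsilon_{i}=-1$ for $i\in W_{\epsilon}$. Applying Proposition \ref{PropHC} then gives the eigenvalue of $Z_{K_{j},K_{j}}$ as $\prod_{i\in K_{j}}\epsilon_{(0,i]}q^{2r_{i}+2(i-1)}$. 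Summing the $r_{i}$ contributions from all $K_{j}$ reorganises into $W_{r}(P_{[k]}\cup\tau(P_{[k]}))$ (after halving with the modulus extraction), because the ordered partition of $K_{j}$ into $\tau$-cycles below a maximal one matches by construction the ordered partition $\mathfrak{r}_{i}$ of $r$.

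The main obstacle will be the combinatorial bookkeeping: verifying that the iterated closures produce exactly the partition of indices matching the map $W_{r}$, and that the $q$-prefactors together with the Harish-Chandra shifts $2(i-1)$ combine to give precisely the stated exponent. I would proceed by induction on $\lvert\mathfrak{N}_{S,k}\rvert$, with base case $\lvert\mathfrak{N}_{S,k}\rvert=0$ (where $Z_{S,k}$ is already diagonal and the claim collapses directly to Proposition \ref{PropHC}), and inductive step supplied by one application of \eqref{equation: weight relation 2} together with the inductive hypothesis applied to the restricted shape. The required sign choices and the positivity of $\lvert Z_{S,k}\rvert$ are guaranteed by Corollary \ref{cor: self adjoint} combined with the $\epsilon$-adaptation of $r$ from Theorem \ref{theorem: highest weights of big cell reps}.
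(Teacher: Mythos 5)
Your proposal follows essentially the same route as the paper: evaluate the self-adjoint (diagonal) minors by transporting to a big cell representation via the $\alpha_i$ coactions and reading off $\prod_i \epsilon_{(0,i]}q^{2r_i}$, then reduce non-self-adjoint $Z_{S,k}$ to that case by iterating the closure identities \eqref{equation: weight relation 1} and \eqref{equation: weight relation 2}, with the $(-q)^{k-\mathfrak{c}_0-1}$ prefactors accumulating into the correction $-\lvert\mathfrak{C}_{S,k}\rvert k+\sum_i(\mathfrak{c}_i+i)$. One bookkeeping correction: the eigenvalue of a diagonal closure $Z_{K_j,K_j}$ should be taken from Proposition \ref{prop: TN embedding} as $\prod_{i}\epsilon_{(0,i]}q^{2r_i}$ \emph{without} the Harish-Chandra shifts $2(i-1)$ (those belong to the central elements $\sigma_k$ in Proposition \ref{PropHC}, not to the leading minors), and indeed no such shifts appear in the stated exponent.
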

\begin{proof}
	Given a fixed shape $S$ of $\mcO_{q}(H(N))$, we will need to consider all $Z_{\downarrow^{m}S,k}$ appearing in the restrictions of $S$ for $0\leq m\leq N-2$. We first consider the set of self adjoint $Z_{\downarrow^{m}S,k}$. Then the closure of any non-self adjoint $Z_{S,k^{\prime}}$ will be one of the self adjoint terms. It is straightforward to see that starting with the smallest rank term and working in terms of increasing ranks, we can use the $\alpha_{i}$ coactions to permute the self adjoint terms and obtain the leading minors of a big cell representation. It follows that the highest weight of a rank $K$ self adjoint $Z_{\downarrow^{m}S,k}$ is
	\[
	\prod\limits_{i=1}^{K}\epsilon_{(0,i]}q^{2r_{i}}.\]
	Which we can rewrite as
	\[
	\lvert Z_{S,k}\rvert v_{0} = q^{2W_{r}(\downarrow^{m}P_{[k]})}v_{0}.\]
	Now consider a non-self adjoint $Z_{S,k}$. If $P_{[k]}$ contains a single $i$ such that $\tau(i)\notin P_{[k]}$, then using equation \eqref{equation: weight relation 1}, we can write:
	\[
	\epsilon_{j}q^{2W_{r}(P_{[k]}\cup\tau(i))+2W_{r}(P_{[k]}\setminus\{i\})} = -q^{-2}Z_{S,k}Z_{S,k}^{*},\]
	where $j\in W_{\epsilon}$ corresponds to $\{i,\tau(i)\}$.
	
	If $P_{[k]}$ contains more than one $i$ such that $\tau(i)\notin P_{[k]}$, then we can use equation \eqref{equation: weight relation 2}. This allows us to relate the weight of $Z_{S,k}$ to that of $Z_{S^{\prime},k^{\prime}}$ with one of the non-fixed $i$'s removed. The weight formula then follows from this.
\end{proof}
Note in the above theorem, if $\tau(i)=i$ and $u(i)=-1$, then we need to choose $\epsilon$ to account for this as well, however if the shape does not put a restriction on a particular $\epsilon_{j}$, then we can take it to be either $1$ or $-1$.

Before proceeding to the classification, we want to describe which elements may act as zero on the highest weight vector:

\begin{Def}
	For a given shape and highest weight vector of arbitrary highest weight, we denote by $\mathcal{A}_{S}$, the set
	\[
	\{Z_{ij}:Z_{ij}v_{0}=0\}.\]
\end{Def}

\begin{Prop}
	If $i\vartriangleright j$ under the $\tau$ ordering, or $u(i)=0$ and $u(j)\neq0$, then $Z_{ij}\in\mathcal{A}_{S}$.
\end{Prop}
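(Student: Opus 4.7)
The strategy is proof by contradiction: assume $Z_{ij} v_0 \neq 0$, and show that the ordered weight $w(Z_{ij} v_0)$ strictly exceeds $w(v_0)$ in the lexicographic order, violating the maximality that characterises $v_0$. By \eqref{equation: restricted ZSk comm rels} applied with $I = \{i\}$, $J = \{j\}$, $Z_{ij}$ $q$-commutes with every $Z_{\downarrow^{m} S, k}$, making $Z_{ij} v_0$ a joint eigenvector whose $(m, k)$-entry of $w$ equals the corresponding entry of $w(v_0)$ multiplied by $q^{P_{m,k}}$, where
\[
P_{m,k} = [i \in \downarrow^{m} P_{[k]}] + [i \in \downarrow^{m} \tau(P_{[k]})] - [j \in \downarrow^{m} P_{[k]}] - [j \in \downarrow^{m} \tau(P_{[k]})].
\]
Since $0 < q < 1$, this entry strictly increases exactly when $P_{m,k} < 0$. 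It thus suffices to show that the first $(m, k)$ with $P_{m,k} \neq 0$, in the tuple ordering defining $w$, has $P_{m,k} < 0$.

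In Case 2 the hypothesis $u(i) = 0$ forces $\tau(i) = i \notin P$, so $[i \in \downarrow^{m} P_{[k]}] = [\tau(i) \in \downarrow^{m} P_{[k]}] = 0$ at every restriction level (trivially when $i$ ceases to be an index of $\downarrow^{m} S$). Hence $P_{m,k} = -[j \in \downarrow^{m} P_{[k]}] - [j \in \downarrow^{m} \tau(P_{[k]})] \leq 0$, strictly negative as soon as $j$ or $\tau(j)$ enters $\downarrow^{m} P_{[k]}$, so the first non-vanishing entry in the tuple is automatically negative.

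For Case 1, let $t(\cdot)$ denote the cycle of $\tau|_{P_{[M]}}$ containing a given element and set $m^{*} = N - \max t(j)$. Iterating Proposition \ref{proposition: restriction of shapes} shows that the first $m^{*}$ restrictions delete exactly the indices $\{N, N-1, \ldots, \max t(j) + 1\}$ from the index set, keeping every cycle with maximum $\leq \max t(j)$ intact and disrupting every cycle with maximum $> \max t(j)$: the maximum of such a cycle is excised, and in the $2$-cycle case its other element is either also excised (when exceeding $N - m^{*}$) or becomes a $u = 0$ fixed point of $\downarrow^{m^{*}} \tau$. Applied to $t(j)$ (intact) and $t(i)$ (disrupted since $\max t(i) > \max t(j)$), this yields $j, \tau(j) \in \downarrow^{m^{*}} P$ with $\downarrow^{m^{*}} u(j) = u(j) \neq 0$, while $[i \in \downarrow^{m'} P_{[k]}] = [\tau(i) \in \downarrow^{m'} P_{[k]}] = 0$ for every $m' \geq m^{*}$. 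For $m' > m^{*}$ the inequality $\max t(j) > N - m'$ disrupts $t(j)$ as well, so $[j \in \downarrow^{m'} P_{[k]}] = [\tau(j) \in \downarrow^{m'} P_{[k]}] = 0$ and $P_{m',k} = 0$ for all $k$. At $m' = m^{*}$ the situation is precisely Case 2 applied to $\downarrow^{m^{*}} S$, so all $P_{m^{*}, k} \leq 0$, and the first non-zero value is $-1$ (or $-2$ when $j = \tau(j)$) at $k_0 = \min\bigl(\mathrm{pos}_{\downarrow^{m^{*}} P}(j), \mathrm{pos}_{\downarrow^{m^{*}} P}(\tau(j))\bigr)$. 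Since the lex order on $w$ places all entries at levels $m' > m^{*}$ before the entry at $(m^{*}, k_0)$, and these all vanish, $(m^{*}, k_0)$ is the first differing component of $w(Z_{ij}v_0)$ versus $w(v_0)$, and is strictly larger; contradiction.

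The technical heart of the argument is the cycle bookkeeping in Case 1: one must verify that the choice $m^{*} = N - \max t(j)$ places $t(j)$ exactly on the boundary of disruption, and handle the subcases $i = \max t(i)$ versus $i < \max t(i)$, and fixed-point versus $2$-cycle alternatives for $j$, to guarantee that the first non-zero commutation power in the tuple really arises at $(m^{*}, k_0)$ with negative sign.
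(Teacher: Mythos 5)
Your proof is correct and follows the same strategy as the paper's much terser argument: locate a restricted shape minor $Z_{\downarrow^{m}S,k}$ whose index set contains $j$ but not $i$ or $\tau(i)$, and combine the $q$-commutation relation \eqref{equation: restricted ZSk comm rels} with lexicographic maximality of $w(v_0)$ — you additionally supply the bookkeeping (the choice $m^{*}=N-\max t(j)$ and the vanishing of all exponents at levels $m'>m^{*}$) that the paper leaves implicit. The only loose end is the boundary case $t(j)=\{1\}$, where $m^{*}=N-1$ falls outside the range $0\le m\le N-2$ over which the weight tuple is defined; there the same computation shows the exponent at level $N-2$, $k=1$ is already $-2$ (since $j=1$ is the minimum of every $\downarrow^{m}P$ while $i,\tau(i)\ne 1$), so the first entry of the tuple already increases and the conclusion still holds.
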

\begin{proof}
Recall the commutation relations given by Lemma \ref{equation: restricted ZSk comm rels}. By including $Z_{S,k}$ from restrictions, we see that in either case we can find some $Z_{\downarrow^{m}S,k}$ such that its indices contain $j$, but do not contain $i$ or $\tau(i)$. It then follows from the commutation relation that $Z_{ij}$ must act as zero on $v_{0}$.
\end{proof}

We note that the above proposition does not give a full description of $\mathcal{A}_{s}$, which may contain extra elements depending on the shape. For example, for $\tau=(1,4,3,2)$, $u(i)\neq 0$, we have $Z_{23}\in\mathcal{A}_{S}$ by the $\tau$ ordering, however we also have $Z_{32}\in\mathcal{A}_{S}$ as $Z_{13,12}=0$.

\section{Shapes of finite dimensional representations}\label{section: fin dim reps}

The final step we need to take before proceeding to a general classification is to better understand the case of finite dimensional representations. In particular, we want to show the following:

\begin{Theorem}\label{theorem: fd shapes}
	Let $V$ be a finite dimensional irreducible $\mcO_q(H(N))$-representation. Then its shape $S$ must be the same as an $\mcO_q(H(N))$-character.
\end{Theorem}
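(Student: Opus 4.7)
I would proceed by induction on $N$. The base case $N = 1$ is immediate: $\mcO_q(H(1)) = \C[Z_{11}]$ is a polynomial ring on a single self-adjoint generator, so every finite dimensional irreducible $*$-representation is one dimensional and hence a character.

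For the inductive step, let $V$ be a finite dimensional irreducible of rank $M$ with shape $S = (\tau, u)$. I first restrict $V$ via $\downarrow$ and invoke Proposition \ref{proposition: restriction of shapes}: every irreducible component of $\downarrow V$ is finite dimensional and has shape equal to the top-left $(N-1) \times (N-1)$ block of $S$. By the inductive hypothesis this restricted shape matches a character shape with parameters $(k_\mathrm{res}, l_\mathrm{res})$, namely: zeros at positions $\{1, \ldots, k_\mathrm{res}\}$, mirror pairs $(k_\mathrm{res} + i + 1, N - 1 - i)$ for $0 \leq i \leq l_\mathrm{res} - 1$, and unimodular fixed points on the remainder.

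The remaining work is a case analysis on $\tau(N)$. If $\tau(N) = a$ with $a < N$, Proposition \ref{proposition: restriction of shapes} says the restriction turns $a$ into a zero fixed point. Hence the zero set $\{1, \ldots, k_\mathrm{res}\}$ of the restricted character equals $\{1, \ldots, k\} \cup \{a\}$, where $k$ is the number of leading zeros of $S$. Since $u(a) \neq 0$ rules out $a \leq k$, and contiguity forces $a = k + 1$, we must have $a = k + 1$ and $k_\mathrm{res} = k + 1$; the pair $(k + 1, N)$ then fits as the outermost mirror pair of a character shape with parameters $(k, l_\mathrm{res} + 1)$ for $S$. If $\tau(N) = N$ with $u(N) = 0$, then $Z_{NN}$ vanishes on $V$; I would then expand the principal rank $2$ minor $Z_{\{j, N\}, \{j, N\}}$ via a Laplace-type formula such as \eqref{EqLaplExp1}, use $Z_{NN} = 0$ and the self-adjointness $Z_{Nj}^{*} = Z_{jN}$ to relate it to the positive operator $Z_{Nj}^{*} Z_{Nj}$, and combine with a positivity/spectral argument on this minor to conclude $Z_{Nj} = 0 = Z_{jN}$ on $V$ for every $j$. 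Then $V$ factors through the action on the top-left block, and the shape $S$ has zero set $\{1, \ldots, k_\mathrm{res}\} \cup \{N\}$; contiguity forces $k_\mathrm{res} = N - 1$, hence $V$ is trivial.

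The main obstacle is the remaining subcase $\tau(N) = N$ with $u(N) = \pm 1$ combined with $l_\mathrm{res} \geq 1$: here $S$ has mirror pairs strictly inside $\{1, \ldots, N-1\}$ and an isolated diagonal entry at $N$, which is not a character shape, and one must show no finite dimensional irreducible can have this structure. I expect this to follow from a spectral argument using the $q$-commutation relations of Lemma \ref{LemqCommpi} together with the common-submatrix Muir expansions \eqref{EqMuirBr}, \eqref{EqMuirBr2} applied to a quantum minor straddling the outermost mirror pair and position $N$: such a minor provides a raising or lowering operator for some $Z_{\downarrow^{m} S, k}$ with nontrivial $q$-power, whose iteration forces infinite spectrum and thus contradicts finite dimensionality. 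Once this subcase is ruled out, all branches yield a character shape for $S$ and the induction closes.
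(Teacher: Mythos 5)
Your strategy (induct on $N$, restrict via $\downarrow$, and do a case analysis on $\tau(N)$) is genuinely different from the paper's, which instead anchors the analysis at the \emph{first} row and column: Proposition \ref{z11 prop} shows that $\Ker(Z_{[1]})=0$ forces all leading minors to be nonzero (so $V$ is a big cell representation of definite signature), and when $Z_{11}=0$ the explicit relation from \eqref{EqCommOqHN}, $Z_{1,j+1}Z_{j+1,1}-Z_{j+1,1}Z_{1,j+1}=(q^2-1)Z_{1j}Z_{j1}$, makes $Z_{j+1,1}$ a raising operator whose tower $\lvert\lambda^{(1,j+1)}_{i}\rvert^2=\lvert\lambda^{(1j)}_0\rvert^2(1-q^{2i+2})$ never terminates, forcing $Z_{S,1}=Z_{N1}$; one then recurses inward on $(2,N-1)$, etc. Your skeleton is reasonable, and the reduction to $\downarrow S$ being a character shape via Proposition \ref{proposition: restriction of shapes} is sound, but the places where finite-dimensionality must actually do work are exactly the places you leave open.

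Concretely, there are three gaps. First, the subcase you flag ($\tau(N)=N$, $u(N)=\pm1$, $l_{\mathrm{res}}\geq1$) is the crux of the theorem, not a loose end: the $q$-commutation relations of Lemma \ref{LemqCommpi} by themselves cannot yield a contradiction, because infinite-dimensional irreducibles carry exactly such unbounded raising towers; one needs a positivity argument in the spirit of the paper's $Z_{N-1,N}Z_{N,N-1}v=(q^2-1)Z_{N-1,N-1}^2v$ contradiction, and you have not identified which relation supplies it. Second, even the branch you treat as closed ($\tau(N)=N$, $u(N)=\pm1$, $l_{\mathrm{res}}=0$) is not: there the shape is $\mathrm{diag}(0^k,\epsilon,\ldots,\epsilon,u(N))$ and you never exclude $u(N)=-\epsilon$. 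The shape $\mathrm{diag}(1,\ldots,1,-1)$ is not a character shape, yet it \emph{is} realized by (infinite-dimensional) big cell representations, so ruling it out genuinely requires finite-dimensionality; this is what Proposition \ref{z11 prop} together with the corollary on signatures $+$ or $-$ provides in the paper. Third, your repeated appeals to ``contiguity forces\dots'' are circular: that the zero set of $S$ is an initial segment is part of the conclusion. In the case $\tau(N)=a$, the induction only gives $Z(S)=\{1,\ldots,k_{\mathrm{res}}\}\setminus\{a\}$ with $a\leq k_{\mathrm{res}}$, and you must \emph{prove} $a=k_{\mathrm{res}}$ rather than assume contiguity of $Z(S)$; similarly in the $u(N)=0$ case you must derive a contradiction when $k_{\mathrm{res}}<N-1$, not assert it. Until these three points are supplied with actual arguments, the proof is incomplete.
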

We will split the proof of this into two parts, depending on whether or not $\Ker(Z_{11}) = \Ker(Z_{[1]}) = 0$ in the representation.
\begin{Prop}\label{z11 prop}
	If $V$ is an irreducible finite dimensional $\mcO_q(H(N))$ representation with $\Ker(Z_{[1]})= 0$, then $\Ker(Z_{[k]})= 0$ for all $1\leq k\leq N$.
\end{Prop}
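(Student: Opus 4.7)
The plan is to proceed by induction on $N$, with the base case $N=1$ being immediate since $\mcO_q(H(1))=\C[Z_{11}]$ and the hypothesis $\Ker(Z_{[1]})=0$ is precisely the conclusion.

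For the inductive step, I restrict $V$ to the subalgebra of $\mcO_q(H(N))$ generated by $\{Z_{ij}:1\leq i,j\leq N-1\}$, which is isomorphic to $\mcO_q(H(N-1))$, and denote this restriction by $\downarrow V$. Each irreducible component $W$ of $\downarrow V$ is finite-dimensional, and since $Z_{[1]}=Z_{11}$ is the same element in both algebras with $\Ker(Z_{11}|_W)\subseteq\Ker(Z_{11}|_V)=0$, the component $W$ satisfies the hypothesis of the proposition for $N-1$. The inductive hypothesis applied to each such $W$ gives $\Ker(Z_{[k]})=0$ on $W$ for every $k\leq N-1$, and hence $\Ker(Z_{[k]})=0$ on $V$ by direct sum decomposition.

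For the remaining case $k=N$, I observe that having $\Ker(Z_{[k]})=0$ on $W$ for all $k\leq N-1$ forces the shape of $W$ to satisfy $\tau(j)=j$ and $u(j)\in\{\pm 1\}$ for $j\leq N-1$ (otherwise some $Z_{[k]}$ with $k\leq N-1$ would lie in the shape ideal $I^{\leq}_S$ and act as zero, contradicting invertibility). By Proposition~\ref{proposition: restriction of shapes}, the shape $S$ of $V$ agrees with $\downarrow S$ on the upper-left $(N-1)\times(N-1)$ block, and since $\tau$ must be an involution on $[N]$ (Proposition~\ref{proposition: restriction of shapes} and Corollary~\ref{cor: self adjoint}), necessarily $\tau(N)=N$. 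If $u(N)\neq 0$, then $S$ has full rank $N$ with $Z_{[N]}=Z_{S,N}$, and $\Ker(Z_{[N]})=0$ follows directly from the definition of shape.

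The main obstacle is ruling out the remaining case $u(N)=0$. In that case $V$ would be a rank-$(N-1)$ big cell representation of shape $\epsilon=(u(1),u(1)u(2),\ldots,u(1)\cdots u(N-1),0)$, and by the Harish-Chandra formula of Proposition~\ref{PropHC} the central element $\sigma_N$ acts as zero on $V$ (since the product defining $e_N$ contains the vanishing factor $\epsilon_{(0,N]}q^{2r_N+2N-2}$), hence $Z_{[N]}=q^{-N(N-1)}\sigma_N=0$. I expect to derive a contradiction from the fact that, via the embedding $i_T^\epsilon$ of Proposition~\ref{prop: TN embedding}, $V$ corresponds to a unitarizable highest weight $\mcO_q^\epsilon(T(N))$-representation whose $\epsilon$-adapted highest weight (Theorem~\ref{theorem: highest weights of TN}) is subject to no integrality constraint linking the $N$-th coordinate to the others when $\epsilon_N=0$; consequently the operators $T_{jN}$ for $j<N$ act on the highest weight vector without any nilpotency obstruction, producing an infinite sequence of linearly independent weight vectors and contradicting $\dim V<\infty$.
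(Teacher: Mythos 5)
Your induction set-up and the reduction are essentially correct and parallel the first half of the paper's proof: the restriction to $\mcO_q(H(N-1))$, the inductive conclusion that $\Ker(Z_{[k]})=0$ for $k\leq N-1$, and your shape-theoretic argument correctly whittle everything down to excluding the single configuration $\tau=\id$, $u=(\pm1,\ldots,\pm1,0)$. The problem is the final step, which you yourself flag with ``I expect to derive a contradiction'': as written it has a genuine gap, in two places. First, nothing you say rules out that $T_{jN}=0$ for \emph{all} $j<N$ (equivalently, that the entire last row and column of $Z$ act as zero on the putative representation); ``no nilpotency obstruction'' is perfectly compatible with the lowering operators acting as zero, in which case no infinite string of weight vectors is produced at all. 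Showing that this degenerate configuration is inconsistent with $Z_{11}$ being invertible is precisely the nontrivial content that must be supplied. Second, even granting that some $T_{jN}\neq 0$, the inference from ``the $\epsilon$-adapted condition imposes no integrality constraint in the $N$-th direction'' to ``infinitely many linearly independent weight vectors'' needs the explicit cross-relations between $T_{jN}$ and $T_{jN}^*$ in $\mcO_q^\epsilon(T(N))$ when $\epsilon_{(j,N]}=0$, together with a positivity argument; Theorem \ref{theorem: highest weights of TN} only tells you which highest weights are unitarizable, not which modules are finite dimensional, so there is nothing in the cited material that you can simply invoke here.

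For comparison, the paper closes this case with a short positivity computation that bypasses $\mcO_q^\epsilon(T(N))$ entirely. Since the $Z_{[j]}$ pairwise commute and are invertible for $j\leq N-1$, and each $Z_{ij}$ with $i\neq j$ shifts their joint eigenvalues by powers of $q$, finite dimensionality produces a joint eigenvector $v$ with $Z_{ij}v=0$ for all $i<j$; then $Z_{[k]}v=\prod_{i\leq k}Z_{ii}v$, so $Z_{[N]}v=0$ forces $Z_{NN}v=0$, and the relation \eqref{EqCommOqHN} applied to $v$ yields $Z_{N-1,N}Z_{N,N-1}v=(q^{2}-1)Z_{N-1,N-1}^{2}v$, whose left-hand side pairs with $v$ to give $\lVert Z_{N,N-1}v\rVert^{2}\geq 0$ while the right-hand side is strictly negative. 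Note that this computation is exactly what settles point (a) above: it is where the hypothesis $\Ker(Z_{[1]})=0$ actually gets used to force the last row and column of $Z$ to be nonzero. If you want to keep your route through $\mcO_q^\epsilon(T(N))$, you will need to prove an analogous statement there; either way, some concrete use of the commutation relations and the $*$-structure is unavoidable, and your current sketch does not contain it.
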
	
\begin{proof}	
	Consider first the case $N= 2$. Then by the classification in Section 2.2 of \cite{DCMo24a}, any irreducible finite dimensional $\mcO_q(H(2))$ representation with $Z_{[1]}\neq 0$ has $Z_{[2]}\neq 0$. 
	
	We now proceed by induction. Consider some irreducible finite dimensional $\mcO_q(H(N))$-representation $V$ with $Z_{[1]}\neq0$, so $\Ker(Z_{[1]})= 0$. Let $V =\oplus_i V_{i}$ be a decomposition of $V$ into irreducible $\mcO_q(H(N-1))$ representations. As $Z_{[k]}$ $q$-commutes with all $Z_{ij}\in\mcO_q(H(N))$, it follows that $Z_{[1]}\neq 0$ when acting on each $V_i$, and so we have by induction that $Z_{[k]}\neq 0$ on each $V_i$ for $1\leq k\leq N-1$, and hence $\Ker(Z_{[k]})\neq 0$ on $V$ for $1\leq k\leq N-1$. 
	
	Now as the $Z_{[j]}$ pairwise commute, they have a joint eigenbasis with non-zero eigenvalues. The $q$-commutativity with the generators $Z_{ij}$ then forces each $Z_{ij}$, for $i\neq j$, to act as a raising/lowering operator on $V$. Since $V$ is finite dimensional, we can find a lowest weight vector $v$ such that $Z_{ij}v=0$ for all $1\leq i<j\leq N$. Considering $Z_{[k]}v$, by equation \eqref{TheoCentrPol} we can then simplify to get
	\[
	Z_{[k]}v = \prod\limits_{i=1}^{k}Z_{ii}v. \]
	Hence as $\Ker(Z_{[k]})= 0$ for $1\leq k\leq N-1$, we see that  $Z_{[N]}=0$ only if $Z_{NN}v=0$. Suppose now that $Z_{NN}v=0$. Considering from equation \eqref{EqCommOqHN} the relation 
	\[
	Z_{N-1,N}Z_{N,N-1}-Z_{N,N-1}Z_{N-1,N}=(1-q^2)(Z_{NN}Z_{N-1,N-1}+\sum\limits_{l<N-1}Z_{Nl}Z_{lN}-\sum\limits_{l<N}Z_{N-1,l}Z_{l,N-1})
	\]
	acting on $v$, we get 
	\[
	Z_{N-1,N}Z_{N,N-1}v=(q^{2}-1)Z_{N-1,N-1}^{2}v. 
	\]
	Comparing both sides, we see that by the assumption that $\Ker(Z_{[N-1]}) = 0$, we have $Z_{N-1,N-1}v\neq 0$ and so the right hand side has negative coefficient. However the left hand side gives $\lvert\lambda\rvert^{2}v$, for some $\lambda\in\mathbb{C}$. This contradiction then shows that $Z_{NN}v\neq 0$, and so we must have $Z_{[N]}\neq 0$.
\end{proof} 

By the results in Section 4.4 of \cite{DCMo24a}, we then obtain:
\begin{Cor}
	If $V$ is an irreducible finite-dimensional $\mcO_q(H(N))$-representation and $\Ker(Z_{[1]})= 0$, then $V$ is a big cell representation of signature $+ = (+,+,\ldots,+)$ or $-=(-,-,\ldots,-)$. In particular, any finite dimensional $\mcO_q(H(N))$ irreducible factors through $\mcO_q(T(N))$.
\end{Cor}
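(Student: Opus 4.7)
The strategy is to assemble the preceding Proposition together with the big cell / highest weight machinery that has already been developed. First I would apply Proposition \ref{z11 prop}: the hypothesis $\Ker(Z_{[1]})=0$ promotes to $\Ker(Z_{[k]})=0$ for all $1\le k\le N$, so every leading quantum minor $Z_{[k]}$ is invertible on the finite-dimensional space $V$. Each $Z_{[k]}$ is self-adjoint (the $*$-structure $Z^*=Z$ preserves leading minors) and the leading minors commute pairwise: specializing the commutation relations \eqref{EqqCommQuot} to the diagonal shape with $P_{[k]}=[k]$ gives $q$-commutation exponent zero between $Z_{[k]}$ and $Z_{[l]}$. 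Thus the $Z_{[k]}$ admit a joint eigendecomposition of $V$ with nonzero real eigenvalues, exhibiting $V$ as a rank $N$ representation with shape $(\id,u)$, $u(i)\in\{+1,-1\}$, i.e.\ a big cell representation with some signature $\epsilon\in\{+1,-1\}^N$.

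Next, since all $Z_{[k]}$ are invertible on $V$, the representation passes to the localization $\left(\mcO_q(H(N))/I_{N+1}\right)[Z_{[1]}^{-1},\ldots,Z_{[N]}^{-1}]$, and Proposition \ref{prop: TN embedding} embeds this localization into $\mcO_q^\epsilon(T(N))$. Boundedness together with the positivity conditions built into the definition of shape ensure that $V$ is a unitarizable highest weight $\mcO_q^\epsilon(T(N))$-representation in the sense of Definition \ref{def: TN highest weight}, with some $\epsilon$-adapted highest weight governed by Theorem \ref{theorem: highest weights of TN}.

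The crux is then to constrain $\epsilon$ using the finite-dimensionality of $V$. Here I would invoke the results of Section 4.4 of \cite{DCMo24a}, where finite-dimensional big cell representations of the reflection equation algebra are classified: a unitarizable highest weight $\mcO_q^\epsilon(T(N))$-representation can only be finite-dimensional when $\epsilon$ is constant, either $+$ or $-$, since for mixed $\epsilon$ the signs $\epsilon_{(s,t]}$ on the non-compact positions force the raising operators $T_{ij}^*$ to act non-nilpotently on any highest weight vector satisfying the $\epsilon$-adapted condition. This yields signature $+$ or $-$ as claimed.

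For the \emph{in particular} statement, I would treat the case $\Ker(Z_{[1]})\neq 0$ by induction on $N$. By self-adjointness of $Z_{11}=Z_{[1]}$ together with irreducibility and the commutation relations of Lemma \ref{LemqCommpi}, a non-trivial kernel of $Z_{11}$ forces the entire first row and column of $Z$ to vanish on $V$ (any residual $Z_{1j}$ would $q$-commute with $Z_{11}$ and produce an invariant complement to $\Ker(Z_{11})$). Consequently $V$ factors through the quotient $\mcO_q(H(N))/J_1\cong \mcO_q(H(N-1))$ of Lemma \ref{Lem OqHn quotient}, and the inductive hypothesis together with the main part of the corollary gives factorization through $\mcO_q(T(N-1))\hookrightarrow \mcO_q(T(N))$. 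The main obstacle I expect is the third step: extracting from Section 4.4 of \cite{DCMo24a} the rigidity statement that only constant $\epsilon$ gives finite-dimensional unitarizable highest weight modules, since this is where the interaction between the indefinite $*$-structure induced by the signs $\epsilon_{(s,t]}$ and the non-commutativity of the creation operators $T_{ij}^*$ is delicate.
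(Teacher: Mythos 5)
Your first three paragraphs follow essentially the same route as the paper: Proposition \ref{z11 prop} gives invertibility of all leading minors, the joint spectral decomposition of the commuting self-adjoint $Z_{[k]}$ identifies the shape as diagonal of full rank (hence big cell for some $\epsilon\in\{\pm1\}^N$), and the restriction to constant $\epsilon$ is delegated to Section 4.4 of \cite{DCMo24a} --- which is exactly what the paper does, its entire proof being that citation. That part is fine.

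The last paragraph, however, contains a genuine error. On an irreducible, $Z_{11}=Z_{[1]}$ $q$-commutes with all generators, so $\Ker(Z_{11})\neq 0$ indeed forces $Z_{11}=0$; but $Z_{11}=0$ does \emph{not} force the first row and column of $Z$ to vanish. The non-diagonal $*$-characters of Theorem \ref{Character classification theorem} are explicit (one-dimensional, hence finite-dimensional) counterexamples: for $N=2$, $\chi(Z)=c\bigl(y_0e_{12}+\overline{y_0}e_{21}+(a-a^{-1})e_{22}\bigr)$ has $Z_{11}=0$ and $Z_{12}\neq 0$. Indeed, the bulk of the paper's proof of Theorem \ref{theorem: fd shapes} is devoted precisely to the case $Z_{11}=0$ but $Z_{1j}\neq 0$ for some $j$, which your argument declares impossible. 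Your parenthetical justification also does not parse: once $Z_{11}=0$ we have $\Ker(Z_{11})=V$, and the fact that $Z_{1j}$ $q$-commutes with $Z_{11}$ only says it preserves that kernel; it gives no invariant complement and no vanishing.

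Fortunately this extra argument is not needed. Read in context (see the sentence immediately following the corollary, ``any irreducible $\mcO_q(H(N))$-representation $V$ with $\Ker(Z_{[1]})=0$ is obtained by\dots''), the ``in particular'' clause retains the hypothesis $\Ker(Z_{[1]})=0$; under that reading it is immediate from the first assertion, since a big cell representation of constant signature $\pm$ factors through $\mcO_q^{\pm}(T(N))=\mcO_q(T(N))$ via $Z\mapsto \pm T^*T$ (Proposition \ref{prop: TN embedding}). The unconditional statement for arbitrary finite-dimensional irreducibles is only available later, via Theorem \ref{theorem: fd shapes} and the Section \ref{section: classification} corollary realizing every finite-dimensional irreducible inside $\chi\otimes V_\lambda$; it cannot be reached by the induction you propose.
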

In other words, any irreducible $\mcO_q(H(N))$-representation $V$ with $\Ker(Z_{[1]})=0$ is obtained by tensoring a constant character $\chi(Z) = \pm I_N$ with an admissible irreducible $U_q(\mfu(N))$-representation, and has shape $S = \pm I_N$. 

We now return to the second half of the proof of Theorem \ref{theorem: fd shapes}, for the case $Z_{11} = 0$:
\begin{proof}
	If $Z_{1j}= Z_{j1} = 0$ for all $j$, then by Lemma \ref{Lem OqHn quotient} the representation factors through $\mcO_q(H(N-1))$, and the result then follows by induction. 
	
	We may hence assume that $Z_{11}=0$ but $Z_{1j}\neq 0\neq Z_{j1}$ for some $1<j\leq N$. We can then also assume that $Z_{k1}=0$ for  all $1\leq k<j$, so $Z_{S,1}= Z_{j1}$. Now, consider from \eqref{EqCommOqHN} the relation
	\[
	Z_{1,j+1}Z_{j+1,1}-Z_{j+1,1}Z_{1,j+1}=(1-q^{2})(Z_{j+1,j+1}Z_{11}-\sum\limits_{l<j+1}Z_{1l}Z_{l1}).\]
	We see with $Z_{k1}=0$ for $1\leq k<j$ this reduces to 
	\[
	Z_{1,j+1}Z_{j+1,1}-Z_{j+1,1}Z_{1,j+1}=(q^{2}-1)Z_{1j}Z_{j1}\]
	
	Hence $Z_{j+1,1}$ acts as a raising operator. We can therefore take a basis of $V=\{v_{i,k}\}$ such that
	\[
	Z_{1,j+1}v_{i,k}=\lambda^{(1,j+1)}_{i,k}v_{i+1,k}, \quad \lambda^{(1,j+1)}_{i,k}\in\mathbb{C}, \quad \text{ and } \quad Z_{1j}v_{i,k}=q^{i}\lambda^{(1j)}_{0,k}v_{i,k}, \quad \lambda_{0,k}^{(1j)}\in\mathbb{C}^{\times}. \] 
	Using the above commutation relation, we find:
	\[
	\lvert\lambda^{(1,j+1)}_{i,k}\rvert^{2}=\lvert\lambda^{(1j)}_{0,k}\rvert^{2}(1-q^{2i+2})
	\]
	Clearly $\lambda^{(i,j+1)}_{i,k}$ is non-zero for all $i$. Then for $V$ to be finite dimensional we must have $Z_{S,1}=Z_{N1}$.
	
	Now, assume that $Z_{k2}=0$ for $2\leq k<N$ and $Z_{N2}\neq 0$. From \eqref{EqCommOqHN}, after quotienting we get the relation
	\[
	Z_{2N}Z_{N2}-Z_{N2}Z_{2N} = (1-q^2)Z_{N1}Z_{1N}.\]
	Similarly to the $Z_{j1}$ case we see then that $Z_{k2}=0$ for $2\leq k<N$ and $Z_{N2}\neq 0$ is not possible. Hence if $V$ is finite dimensional and $Z_{S,1}=Z_{N1}$, then we must have some $Z_{k2}\neq 0$ for $2\leq k\leq N-1$.
	
	Now consider restricting $V$ to $\mcO_q(H(N-1))$. If $\downarrow Z_{22}\neq 0$, then by Proposition \ref{z11 prop}, we see as an $\mcO_q(H(N-1))$ representation, $V$ must have shape
	\[
	\tau = \id, \qquad u = (0,\pm 1,...,\pm 1).\]
	By the restriction of shapes described in Proposition \ref{proposition: restriction of shapes}, we then see that as an $\mcO_q(H(N))$ representation, since $Z_{S,1}=Z_{N1}$, $V$ must have shape
	\[
	\tau = (N,2,3,...,N-1,1), \qquad u = (e^{i\theta},\pm 1,...,\pm 1,e^{-i\theta}),\]
	which is a character shape.
	
	Again considering the restriction of $V$ to $\mcO_q(H(N-1))$, if instead $\downarrow Z_{22}=0$, then by the previous arguments we must have $\downarrow Z_{N-1,2}\neq 0$. Hence $Z_{\downarrow S,1}=Z_{N-1,2}$. Again by Proposition \ref{proposition: restriction of shapes}, we see that the shape of $V$ as an $\mcO_q(H(N))$ representation must have
	\[
	\tau(1)=N, \qquad \tau(2)=N-1.\]
	Repeating this argument, we see it will fully determine the shape of $V$, and the only possible shapes that can appear are character shapes. 
\end{proof}

Let us return now to the setting of general $\mcO_q(H(N))$ representations. Consider tensoring an $\mcO_q(H(N))$ representation with a $U_q(\mfu(N))$ representation, and consider the possibilities for the shape of the resulting representation. We show in particular that the coaction by $U_q(\mfu(N))$ preserves the shape of the original representation:

\begin{Prop}
If $\pi$ is an $\mcO_q(H(N))$ representation of shape $S$, and $V$ a $U_q(\mfu(N))$ representation, then $\pi$ and $\pi\otimes V$ have the same shape. Further, under the coaction we have
\begin{equation}
	Z_{S,k}\mapsto Z_{S,k}\otimes K^{-1}_{\tau(P_{[k]})}K^{-1}_{P_{[k]}},
\end{equation}
where $K_{I}$ denotes $\prod\limits_{i\in I}K_{i}$.
\end{Prop}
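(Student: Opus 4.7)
The plan is to compute $\Ad^T_q(Z_{S,k})$ directly from the quantum-minor formula \eqref{equation: T coaction q minors} and verify that a single summand survives; the first assertion then reduces to the same computation applied to the generators of $I^{\leq}_S$. The key input is the quantum analogue of the classical fact that an upper triangular matrix has vanishing minor with rows $L$ and columns $J$ unless $L \preceq J$. Since $T_{ij} = 0$ for $i > j$ in $\mcO_q(T(N))$, every non-identity permutation in the $q$-determinant defining $T_{LJ}$ produces a factor $T_{l_r j_s}$ with $l_r > j_s$, so $T_{LJ} = 0$ unless $L \preceq J$; in the diagonal case $L = J = I$ only the identity permutation survives, giving $T_{II} = \prod_{i \in I} T_{ii}$, a product of mutually commuting self-adjoint elements. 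Under $T_{ii} \mapsto K_i^{-1}$ this principal minor maps to $K^{-1}_I$, and is unaffected by the $*$.

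Applying these observations to $\Ad^T_q(Z_{S,k}) = \sum_{K,L} Z_{KL} \otimes T^*_{K,\tau(P_{[k]})}\, T_{L,P_{[k]}}$ forces $K \preceq \tau(P_{[k]})$ and $L \preceq P_{[k]}$, and hence $K \le \tau(P_{[k]})$ and $L \le P_{[k]}$ lexicographically. On the other hand, the element $Z_{KL}$ lies in $I^{\leq}_S$ and is killed by $\pi$ unless $(L,K) \ge (P_{[k]},\tau(P_{[k]}))$ in the lex order. Combining the two sets of inequalities forces $L = P_{[k]}$ and then $K = \tau(P_{[k]})$, so exactly one term survives, yielding
\[
\Ad^T_q(Z_{S,k}) = Z_{S,k} \otimes T^*_{\tau(P_{[k]}),\tau(P_{[k]})}\, T_{P_{[k]},P_{[k]}} = Z_{S,k} \otimes K^{-1}_{\tau(P_{[k]})} K^{-1}_{P_{[k]}}.
\]

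For the first claim, the identical upper-triangular analysis applied to a generator $Z_{IJ}$ of $I^{\leq}_S$ at rank $m \le M$ gives $L \preceq J$ and $K \preceq I$, so $(L,K) \le (J,I) < (P_{[m]},\tau(P_{[m]}))$ lex, whence every surviving $Z_{KL}$ already lies in $I^{\leq}_S$ and is killed by $\pi$; the rank-$(M{+}1)$ generators are killed automatically because $\pi$ has rank $M$. Thus $I^{\leq}_S \subseteq \Ker(\pi \otimes V)$. The remaining shape conditions—trivial kernel of $(\pi \otimes V)(Z_{S,k})$ and positivity of $(-1)^{l(\tau_{\mid P_{[k]}})} \overline{u_{P_{[k]}}} (\pi \otimes V)(Z_{S,k})$—follow from the displayed formula together with the fact that $K^{-1}_{\tau(P_{[k]})} K^{-1}_{P_{[k]}}$ acts as a positive invertible operator on any unitarisable $U_q(\mfu(N))$-module. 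The main bookkeeping obstacle is the interplay of the partial order $\preceq$ (coming from upper triangularity) with the lex order $\le$ (coming from the ideal); once that is in place the double sum collapses to a single diagonal term almost mechanically.
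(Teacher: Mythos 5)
Your proposal is correct and follows essentially the same route as the paper: upper triangularity of $T$ collapses the coaction sum to terms with $(L,K)\leq(J,I)$, and minimality of $Z_{S,k}$ modulo $I^{\leq}_S$ leaves the single diagonal term. You are somewhat more explicit than the paper about verifying the remaining shape conditions (invariance of $I^{\leq}_S$, trivial kernel, positivity via the positive invertible action of $K^{-1}_{\tau(P_{[k]})}K^{-1}_{P_{[k]}}$), which the paper leaves implicit.
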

\begin{proof}
To prove the shape is preserved by the $U_q(\mfu(N))$ coaction, it suffices to show the above coaction relation for $Z_{S,k}$ combined with showing no smaller $Z_{IJ}$ becomes non-zero.

Recall the coaction $\Ad^{T}_q$ given in terms of quantum minors by equation \eqref{equation: T coaction q minors}. As $T_{ij}=0$ for $i>j$, this reduces to
\[
\Ad^{T}_q: Z_{IJ}\mapsto\sum\limits_{K\leq I, L\leq J}Z_{KL}\otimes T_{KI}^{*}T_{LJ}.\]
Then the sum is over $Z_{KL}$ such that $(L,K)\leq (J,I)$ lexicographically, and so for a given rank, the minimal non-zero $Z_{IJ}$ under the action of $\Ad^{T}_q$ is $Z_{S,k} = Z_{\tau(P_{[k]}),P_{[k]}}$, and the coaction becomes
\[
\Ad^{T}_q: Z_{S,k}\mapsto Z_{S,k}\otimes T_{\tau(P_{[k]}),\tau(P_{[k]})}^{*}T_{P_{[k]},P_{[k]}} = Z_{S,k}\otimes \left(\prod\limits_{j\in \tau(P_{[k]})}T_{jj}\prod\limits_{i\in P_{[k]}}T_{ii}\right).\]
Hence the coaction $\Ad^{T}_q$ must preserve the shape of any $\mcO_q(H(N))$-representation.
\end{proof}

\section{Classification of Irreducibles}\label{section: classification}

We can now proceed to using the highest weight theory to classify the irreducibles of $\mcO_{q}(H(N))$. Recall we denote by $s\in\mathbb{R}^{N}$ the central $*$-character, i.e. the value of the centre of $\mcO_{q}(H(N))$ on a specific irreducible. We start with the following:

\begin{Theorem}\label{theorem: shape and highest weight determine rep}
Given two $\mcO_{q}(H(N))$ irreducibles $V$, $V^{\prime}$, consider their shape and central $*$-characters $(S,s)$, $(S^{\prime},s^{\prime})$ respectively. Then $(S,s)=(S^{\prime},s^{\prime})$ if and only if $V\simeq V^{\prime}$.
\end{Theorem}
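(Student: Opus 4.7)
The direction $V \simeq V' \Rightarrow (S,s) = (S',s')$ is immediate, since the shape (Theorem \ref{theorem: unique shape}) and the central $*$-character are invariants of the isomorphism class.

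For the converse, suppose $(S,s) = (S',s')$, and let $v_0 \in V$, $v_0' \in V'$ be the unique highest weight vectors provided by Theorem \ref{theorem: unique highest weight}. My first step is to prove that they carry the same highest weight $\lambda$. Using Corollary \ref{coaction to big cell}, I fix a sequence $\alpha = \alpha_{i_m} \circ \cdots \circ \alpha_{i_1}$ sending $V$ to a big cell representation $\tilde V$; by Theorem \ref{theorem: Uq coaction} the shape change under each $\alpha_{i_j}$ is determined by the current shape matrix, so applying $\alpha$ to $V'$ lands in a big cell representation $\tilde V'$ of the same $\epsilon$-shape, and by Proposition \ref{prop: UN coaction} both share the central character $s$. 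Proposition \ref{PropHC} then determines the multiset $\{\epsilon_{(0,i]} q^{2r_i+2i-2}\}_{i=1}^{M}$ of the big cell highest weight, while the $\epsilon$-adaptedness of Theorem \ref{theorem: highest weights of big cell reps} forces $r_i+i$ to be strictly increasing (by positive integer amounts) across each maximal same-sign block of $\epsilon_{(0,\cdot]}$; together these pin down $r \in \mathbb{R}^M$ uniquely. Inserting $r$ into Theorem \ref{theorem: highest weights of Zsk} and iterating over the restrictions $\downarrow^m S$ via Proposition \ref{proposition: restriction of shapes} fixes the eigenvalues of $v_0$ on every $Z_{\downarrow^m S, k}$; the scalar actions of the diagonal $Z_{ii}$ on $v_0$ then follow inductively from $\sigma_1$ and the lower-rank data already determined.

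With $v_0, v_0'$ of common highest weight $\lambda$, I define $\Phi: V \to V'$ by $\Phi(z v_0) := z v_0'$ for $z \in \mcO_q(H(N))$. To see this is well-defined, I will show that the left annihilator of $v_0$ is generated by $I^{\leq}_S$, the elements of $\mathcal{A}_S$, and the Cartan-type relations $z - \lambda(z)$ for $z$ in $Z_d^S$ or diagonal, all of which depend only on $(S,\lambda)$ and hence kill $v_0'$ as well. The verification is a PBW-type argument: using the quotient commutation relations \eqref{EqqCommQuot} together with the $Z_0^S$ expansion of Lemma \ref{lemma: ZS0 expansion}, every element of $\mcO_q(H(N))/I^{\leq}_S$ rewrites as a sum of monomials $z_- z_d z_+$ with $z_\pm \in Z_\pm^S$ and $z_d$ diagonal, and each such monomial either annihilates $v_0$ (whenever $z_+$ is a non-trivial raising element) or acts by a scalar determined by $\lambda$. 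Granting this, $\Phi$ is a non-zero $\mcO_q(H(N))$-intertwiner between irreducibles, hence bijective; comparing the matrix coefficients $\langle v_0, z v_0 \rangle$ and $\langle v_0', z v_0' \rangle$, which both evaluate through the same scalar actions, shows $\Phi$ is isometric up to a scalar and so upgrades to a unitary equivalence of $*$-representations.

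The main obstacle is the PBW-type normal form underlying well-definedness of $\Phi$: the $Z_0^S$ elements are higher-rank combinations that must be unfolded via Lemma \ref{lemma: ZS0 expansion} into mixed raising/lowering products, and it must be checked that the resulting reorderings produce no new relations beyond those generated by $I^{\leq}_S$, $\mathcal{A}_S$, and the Cartan relations. A secondary point is to avoid the ambiguous $\tau(k) = k+1$ case of Proposition \ref{prop: diagonalize tau(k)} when selecting the coaction sequence of Corollary \ref{coaction to big cell} for $V$ and $V'$ simultaneously, which can always be arranged by choosing the $i_j$ to act on positions away from any such $k$ until the shape has been sufficiently simplified.
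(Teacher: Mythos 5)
Your proposal follows the same two-stage strategy as the paper: first pin down the eigenvalues of $v_0$ and $v_0'$ on all $Z_{\downarrow^m S,k}$ and $Z_{ii}$ from $(S,s)$ (via Proposition \ref{PropHC}, inversion of the elementary symmetric polynomials, $\epsilon$-adaptedness, Theorem \ref{theorem: highest weights of Zsk} and Proposition \ref{proposition: restriction of shapes} — your recovery of $r$ from the multiset of symmetric-function values is a correct, and slightly more explicit, version of the paper's ``we can reverse this''); then deduce $V\simeq V'$ from the fact that an irreducible is determined by the action on its cyclic highest weight vector. Where you diverge is in executing the second stage: you build the intertwiner $\Phi(zv_0)=zv_0'$ and try to justify well-definedness by characterizing the full left annihilator of $v_0$ via a PBW-type normal form, which you correctly flag as the hard unresolved point. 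The paper sidesteps exactly this: it compares the positive functionals $z\mapsto\langle v_0, z^*z\,v_0\rangle$ and $z\mapsto\langle v_0', z^*z\,v_0'\rangle$, arguing that the self-adjoint element $z^*z$ reduces on the highest weight vector to a polynomial in the $Z_{S,k}$, $Z_{\downarrow^m S,k}$ and $Z_{ii}$, whose values are already fixed; equality of these functionals gives $\|zv_0\|=\|zv_0'\|$ for all $z$, which yields well-definedness, isometry and (by irreducibility) surjectivity of $\Phi$ in one stroke, with no need to identify the annihilator ideal. You in fact invoke this matrix-coefficient comparison yourself at the very end to get unitarity — note that it already subsumes the well-definedness step, so the ``main obstacle'' you identify can be dissolved by promoting that final remark to the main argument; the residual normal-ordering input (that $z^*z$ acts on $v_0$ through Cartan-type elements only, using Lemma \ref{lemma: ZS0 expansion} for the $Z_0^S$ part and the annihilation of $v_0$ by raising/lowering elements) is then needed only modulo vectors orthogonal to $v_0$, which is exactly the level of detail the paper itself supplies. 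Your precaution about the $\tau(k)=k+1$ branching of Theorem \ref{theorem: Uq coaction} is sensible but not essential, since either branch has determined shape and the same (coinvariant) central character.
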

\begin{proof}
It is straightforward to see that if $(S,s)\neq (S^{\prime},s^{\prime})$, then $V\not\simeq V^{\prime}$. We then only need to show $(S,s)=(S^{\prime},s^{\prime})$ implies $V\simeq V^{\prime}$.

Denote the highest weight vectors of $V$ and $V^{\prime}$ by $v_{0}$ and $v_{0}^{\prime}$ respectively. Recall from Proposition \ref{PropHC} the central $*$-characters can be obtained as the elementary symmetric polynomials over the highest weights of a big cell representation. We can reverse this to obtain a set of big cell weights from a given central $*$-character. By using the big cell weights with Theorem \ref{theorem: highest weights of Zsk}, we can state how $Z_{S,k}$ will act on $v_{0}$ and $v_{0}^{\prime}$. Further, by combining with Proposition \ref{proposition: restriction of shapes}, we can state how each $Z_{\downarrow^{m}S,k}$, as well as the $\mcO_{q}(H(N-m))$ centres, will act on $v_{0}$ and $v_{0}^{\prime}$. In particular, from the $\mcO_{q}(H(N-m))$ quantum traces, we get how $Z_{ii}$ will act on $v_{0}$ and $v_{0}^{\prime}$ for each $1\leq i\leq N$. It follows that if $(S,s)=(S^{\prime},s^{\prime})$, then the eigenvalues of any $Z_{S,k},Z_{\downarrow^{m}S,k}$, or $Z_{ii}$ will be the same whether acting on $v_{0}$ or $v_{0}^{\prime}$.

We now assume $(S,s)=(S^{\prime},s^{\prime})$, but $V\not\simeq V^{\prime}$. Then there must be some $z\in\mcO_{q}(H(N))$ such that 
\[
z^{*}zv_{0} \neq z^{*}zv_{0}^{\prime}.\]
However, as $z^{*}z$ is self-adjoint, it will simplify to give some polynomial in $Z_{S,k},Z_{\downarrow^{m}S,k}$, and $Z_{ii}$ acting on $v_{0}$ and $v_{0}^{\prime}$, which by the above must have equal values. Therefore $(S,s)=(S^{\prime},s^{\prime})$ forces us to have $V\simeq V^{\prime}$.
\end{proof}

Hence, the combination of shape and central $*$-character uniquely determines an $\mcO_{q}(H(N))$ irreducible. The possible central values for a chosen shape are given by combining Theorem \ref{theorem: highest weights of Zsk} with Theorem \ref{theorem: highest weights of big cell reps}. In particular, recalling the definitions \ref{def: central char signature} and \ref{def: shape signature} of signature for central $*$-characters and shapes respectively, we see that their signatures must be equal. All that remains now is to show that any possible irreducible actually exists.

We first note what the highest weights of characters are:

\begin{Lem}\label{lemma: character weights}
Given a non-diagonal $\mcO_{q}(H(N))$ character of rank $N$, with $\tau\neq\id$ and $k$ minimal such that $Z_{kk}\neq 0$, in terms of highest weights, the characters are given by:
\[
Z_{kk} = \epsilon_{1}q^{2r_{1}}, ~ \lvert Z_{N1}\rvert = q^{r_{1}+r_{N-2k+4}+N-2k+3}, ~ Z_{NN} = \epsilon_{1}(q^{2r_{1}}-q^{2r_{N-2k+4}+2N-4k+6}).\]
\end{Lem}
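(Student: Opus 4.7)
My plan is to combine the explicit form of the character from Theorem \ref{Character classification theorem} with the general highest weight formula of Theorem \ref{theorem: highest weights of Zsk}.

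First, since the character has rank $N$, the integer $k$ in Theorem \ref{Character classification theorem} must vanish, and since $\tau\neq\id$, the integer $l$ must be strictly positive. Reading off $\chi(Z)$ then gives $\chi(Z_{ii})=0$ for $i\leq l$, $\chi(Z_{ii})=ca$ for $l+1\leq i\leq N-l$, $\chi(Z_{ii})=c(a-a^{-1})$ for $N-l+1\leq i\leq N$, and off-diagonal entries $\chi(Z_{i,N+1-i})=cy_{i-1}$, $\chi(Z_{N+1-i,i})=c\bar{y}_{i-1}$ for $1\leq i\leq l$. Hence the minimal $k$ of the lemma equals $l+1$, and $|\chi(Z_{N,1})|=|c|$, $\chi(Z_{NN})=c(a-a^{-1})$.

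Next, I would identify the shape: $\tau$ is the involution swapping $i\leftrightarrow N+1-i$ for $i\leq l$ and fixing $i\geq l+1$, so its cycles under $\vartriangleleft$ are the singletons $\{l+1\},\ldots,\{N-l\}$ followed by the 2-cycles $\{l,N-l+1\},\ldots,\{1,N\}$. The set $W_\epsilon$ then picks out exactly one index per 2-cycle, which pins down the signs in the $\epsilon$ of the associated big cell representation supplied by Corollary \ref{coaction to big cell}.

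Third, the formula $\chi(Z_{kk})=\epsilon_1 q^{2r_1}$ follows from applying Theorem \ref{theorem: highest weights of Zsk} to the rank-one self-adjoint minor $Z_{kk}$ viewed inside its restricted shape (via Proposition \ref{proposition: restriction of shapes}): the singleton $\{k\}=\{l+1\}$ is the smallest cycle, so its $W_r$-value is precisely the first big cell index $r_1$. The formula for $|\chi(Z_{N,1})|$ follows from the same theorem at $k=1$: since $Z_{S,1}=Z_{N,1}$, the only element of $\mathfrak{N}_{S,1}$ is the 2-cycle $\{1,N\}$, whose closure is the full minor $Z_{[N]}$ of rank $\mathfrak{c}_0=N$. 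Substituting into the weight formula gives the exponent $W_r(\{1,N\})+N-1$, which after the partition bookkeeping reduces to $r_1+r_{N-2k+4}+N-2k+3$. Finally, the identity for $\chi(Z_{NN})$ is obtained by verifying the scalar identity $\chi(Z_{kk})^2-\chi(Z_{kk})\chi(Z_{NN})=|\chi(Z_{N,1})|^2$ directly from the values ($c^2a^2-ca\cdot c(a-a^{-1})=c^2$) and substituting the two previously derived expressions, which produces $\epsilon_1(q^{2r_1}-q^{2r_{N-2k+4}+2N-4k+6})$.

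The main obstacle will be the bookkeeping of the partition $\{\mathfrak{r}_i\}$: one must verify that the $\vartriangleleft$-ordering of cycles (singletons first, then 2-cycles arranged by increasing maximum) pairs the 2-cycle $\{1,N\}$ with big cell weights indexed so as to reproduce $r_1$ and $r_{N-2k+4}$ on the nose, using the $\epsilon$-adaptedness of Theorem \ref{theorem: highest weights of big cell reps} to relate weights across the partition. This is a purely combinatorial check but the bookkeeping is delicate, and one should also note that the case $l=N/2$ (where $k=N$ and $\chi(Z_{kk})=\chi(Z_{NN})$) is degenerate and must be treated or excluded separately.
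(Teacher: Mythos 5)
Your outline follows the paper's (very terse) proof quite closely for the first two formulas: the paper's argument is exactly ``apply Theorem \ref{theorem: highest weights of Zsk}, and use Proposition \ref{PropHC} for $Z_{NN}$''. Where you genuinely differ is the third formula: instead of invoking the Harish--Chandra formula of Proposition \ref{PropHC}, you read off from Theorem \ref{Character classification theorem} (with the classification parameter $k$ there equal to $0$ and $l=k-1$) the values $\chi(Z_{kk})=ca$, $\chi(Z_{NN})=c(a-a^{-1})$, $|\chi(Z_{N1})|=|c|$, verify the scalar identity $\chi(Z_{kk})^{2}-\chi(Z_{kk})\chi(Z_{NN})=|\chi(Z_{N1})|^{2}$, and solve for $\chi(Z_{NN})$; since $\epsilon_1^{-1}=\epsilon_1$ this does reproduce $\epsilon_1(q^{2r_1}-q^{2r_{N-2k+4}+2N-4k+6})$ from the first two formulas. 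That is a correct and rather clean alternative to the paper's use of Proposition \ref{PropHC}, and your identification of the shape, of the cycle order, and of $\mathfrak{N}_{S,1}$, $\mathfrak{c}_0=N$ for $Z_{S,1}=Z_{N1}$ is also right. Your caveat about the degenerate case is warranted: when $l=N/2$ (so $k=N/2+1$; $k=N$ only for $N=2$) one has $\chi(Z_{kk})=\chi(Z_{NN})$ and the first and third formulas of the lemma are incompatible, so that case must indeed be excluded or handled separately.

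The gap is precisely the step you describe as ``purely combinatorial bookkeeping''. Theorem \ref{theorem: highest weights of Zsk} applied to $Z_{S,1}=Z_{N1}$ gives the exponent $W_r(\{1,N\})+N-1=r_{N-1}+r_N+N-1$, because $\{1,N\}$ is the $\vartriangleleft$-largest cycle and therefore receives the \emph{last} block $\{r_{N-1},r_N\}$ of the partition, not $\{r_1,r_{N-2k+4}\}$. Passing from $r_{N-1}+r_N+N-1$ to the stated $r_1+r_{N-2k+4}+N-2k+3$ is not a consequence of the cycle ordering, and $\epsilon$-adaptedness (Theorem \ref{theorem: highest weights of big cell reps}) only yields integrality constraints, never the required equalities. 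What is actually needed is the degenerate form of the big cell weight attached to a character, e.g.\ $r_1=\dots=r_{N-2k+2}$ and the shifted pairing of the remaining entries (for $N=5$, $k=3$ one finds $r=(r_1,r_1,r_3,r_3,r_1-2)$, whence $r_4+r_5+4=r_1+r_3+2$). This weight multiset is pinned down by matching the character's central $*$-character, via Proposition \ref{PropHC}, against the explicit Mudrov values you already listed — which is exactly why the paper cites Proposition \ref{PropHC}. So your plan is completable with ingredients you already have on the table, but as written the index bookkeeping for $|Z_{N1}|$ (and hence also the exponent in your derived $Z_{NN}$ formula) does not close without this central-character input.
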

\begin{proof}
This can be calculated using Theorem \ref{theorem: highest weights of Zsk}, along with Proposition \ref{PropHC} for the value of $Z_{NN}$.
\end{proof}

We will also need the following:

\begin{Prop}\label{prop: construction of shapes}
Every shape can be constructed via the $\mcO_{q}(U(N))$ coaction on character shapes.
\end{Prop}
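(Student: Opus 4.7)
The plan is to start from an appropriate character $\chi = \chi_{k,l,a,c,y}$ whose shape already has the same discrete invariants (signature, number of $\tau$-orbits, zero pattern) as the target shape $S = (\tau,u)$, and then to transport the character's shape to $S$ by a carefully chosen sequence of coactions $\alpha_{i_r}\circ\cdots\circ\alpha_{i_1}$. The shape-level effect of these coactions was determined in Theorem \ref{theorem: Uq coaction} (giving either no change or conjugation by a simple transposition $c_j$) and Proposition \ref{prop: diagonalize tau(k)} (giving a splitting into two diagonal $\text{Diag}\{\pm 1,\mp 1\}$ shapes when $\tau(j) = j+1$), and these, together with the freedom to tensor each $\mbs_j$ factor of the coaction with an $\mcO_q(SU(2))$ character $\chi_\theta$, will be the only tools I need.

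First, from the target self-adjoint shape $S$ I read off the numerical data: $z = |\{i : u(i) = 0\}|$, the number $l'$ of transposition orbits of $\tau$, and the numbers $r_+,r_-$ of $\tau$-fixed indices with $u(i)=+1$ and $u(i)=-1$ respectively; these satisfy $z + 2l' + r_+ + r_- = N$. I then select a character $\chi_{k,l,1,c,y}$ with $k = z$, $c = +1$ (or $-1$ if $r_- > r_+$), $a = 1$, and $l = l' + \min(r_+, r_-)$. With $a = 1$ the ``last'' diagonal block of the character at positions $N-l+1,\ldots,N$ vanishes, leaving a clean template shape: zeros at $[k]$, uniform diagonal value $c$ at the middle block $k+l+1,\ldots,N-l$, and $l$ anti-diagonal transposition pairs pairing $k+i+1$ with $N-i$. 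A direct count shows this template has signature $(r_+ + l',\, r_- + l',\, z)$, matching $S$, as must be the case since signature is preserved by coactions.

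I then apply coactions in two phases. In the \emph{splitting phase} I exploit Proposition \ref{prop: diagonalize tau(k)}: whenever the current shape has $\tau(j) = j+1$, $\alpha_j$ decomposes into two irreducibles whose shapes differ from the current one only by replacing the $(j,j+1)$ submatrix with $\text{Diag}\{\pm 1,\mp 1\}$, with both sign choices realized. By first using conjugations to bring a template transposition orbit into two adjacent positions and then splitting, I produce one new pair of opposite-sign fixed diagonals per orbit consumed; doing this on exactly $\min(r_+,r_-)$ orbits yields the required counts of $r_+$ positive and $r_-$ negative fixed diagonals, leaving $l'$ transposition orbits remaining. In the \emph{rearrangement phase} I then use only the ``otherwise'' clause of Theorem \ref{theorem: Uq coaction}: whenever $j$ and $j+1$ lie in different $\tau$-orbits of the current shape, $\alpha_j$ sends the current shape to $c_j S c_j$, and since the simple transpositions generate $\Sym(N)$, I iteratively move the $l'$ remaining transposition orbits together with the fixed diagonals and zeros into the positions dictated by $\tau$ and $u$. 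Throughout, the unimodular values $u_i$ on the non-fixed orbits are adjusted to their prescribed targets by tensoring each applied $\mbs_j$ with a suitable $\chi_\theta$, as observed in the proof of Theorem \ref{theorem: Uq coaction}.

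The main obstacle is the combinatorial verification that the rearrangement phase always has a valid conjugation move available: applying $\alpha_j$ realises $c_j S c_j$ only when $j$ and $j+1$ lie in different $\tau$-orbits of the current shape, and one must check that any two involutions of $[N]$ of the same cycle type are connected by such swaps without ever being stuck in a configuration where every adjacent pair is either equal or in the same orbit. This follows by induction on the number of positions at which the current involution disagrees with $\tau$, using the presence of fixed points and of adjacent distinct orbits at every intermediate stage to always produce a reducing move; the self-adjoint condition on shapes is preserved throughout, so every intermediate shape is legitimate and the coaction moves remain applicable.
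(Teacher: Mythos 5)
Your overall strategy (start from a character whose shape has the right signature, use the $\tau(j)=j+1$ splitting of Theorem \ref{theorem: Uq coaction}/Proposition \ref{prop: diagonalize tau(k)} to create $\pm 1$ diagonal pairs, and use the conjugation moves $S\mapsto c_jSc_j$ to position everything) is the same as the paper's, but your rearrangement phase rests on a claim that is not what Theorem \ref{theorem: Uq coaction} says. That theorem only asserts that \emph{if} the shape of $\alpha_jV$ differs from that of $V$, then it equals $c_jSc_j$; it does not assert that the shape changes whenever $j$ and $j+1$ lie in different $\tau$-orbits. In fact the case analysis in its proof shows the change is one-directional: for instance, if $j$ and $j+1$ are both $\tau$-fixed with $u(j),u(j+1)\neq 0$ the shape does not change at all (the case ``$\tau(N)=N$'' requires $u(N-1)=0$ for a change), and if both are paired to smaller indices the shape changes only when $\tau(j)>\tau(j+1)$. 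Concretely: zeros can only migrate toward larger indices, the endpoints of a transposition orbit can only move toward each other, and two adjacent non-zero fixed diagonal entries can never be interchanged by any $\alpha_j$.

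This breaks your two-phase plan as stated: after the splitting phase you have the correct multiset of diagonal signs, but their relative order among the non-zero fixed entries can no longer be altered, so ``sorting into the positions dictated by $\tau$ and $u$ by adjacent swaps'' is not available, and your proposed combinatorial verification (connectivity of involutions of a given cycle type under adjacent swaps between distinct orbits) answers the wrong question — the real issue is directed reachability, not connectivity. The paper's proof is organized precisely to respect this directionality: the character is chosen so that its anti-diagonal pairs sit in the outermost positions and its zeros at the front, each pair's endpoints are moved inward (an allowed direction) to the adjacent slot where the target requires the resulting $\pm 1$ entries, the split is performed there with the needed sign choice, and only then are the zeros pushed rightward and any surviving off-diagonal entries moved (again inward) to their target positions. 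To repair your argument you would need to either prove a sharper version of Theorem \ref{theorem: Uq coaction} characterizing exactly when the shape changes, or reorganize the construction as the paper does so that every move you invoke is one the coaction actually performs.
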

\begin{proof}
We first recall the statement of Theorem \ref{theorem: Uq coaction}, that the coaction changes the shape matrix $S$ by $c_{k}Sc_{k}$ (or $c_{k}S$ if $(\tau(k)=k+1)$), where $c_{k}$ is the $k$th permutation matrix. It is then straightforward to note that when we act by $\alpha_{k}$, there will be at most a $4\times 4$ sub-matrix of $S$ that is affected, corresponding to $k,\tau(k),k+1,\tau(k+1)$.

Next, we want to show we can construct all diagonal shapes. Consider a diagonal shape of signature $(N_{+},N_{-},N_{0})$. If $N_{+}N_{-}=0$, then we just take a diagonal character of rank $N-N_{0}$. If $N_{+}N_{-}\neq 0$, we instead take a non-diagonal character shape $S_{\chi}$ of rank $N-N_{0}$, with $2\min(N_{+},N_{-})$ non-diagonal entries. Let the first non-diagonal term be at the position $(i,N-i+1)$. Then consider the sub-matrix $\left(S_{\chi}\right)_{(i\leq j,k\leq N-i+1)}$. It will be of the form
\[
\left(S_{\chi}\right)_{(i\leq j,k\leq N-i+1)} = \pm \begin{pmatrix} 0 & & &  e^{-i\theta} \\ & 1 & & \\  & & \ddots & \\  e^{i\theta} & & & 0
\end{pmatrix}.	\]
 Acting $\alpha_{i}$ will shift the $e^{-i\theta}$ downwards, whilst acting $\alpha_{N-i}$ will shift it leftwards. Hence for any $i\leq j\leq N-i$, we can apply repeated coactions until the $e^{-i\theta}$ lies at the position $(j,j+1)$. The sub-matrix of the corresponding new shape matrix will now be of the form
 \[
\alpha\left(S_{\chi}\right)_{(i\leq j,k\leq N-i+1)} = \pm \begin{pmatrix} 1 & & & & & \\ &\ddots & & & & \\ & &0 &e^{-i\theta} & &\\  & &e^{i\theta} & 0 & & \\   & & & & 1 & \\ & & & & & \ddots
 \end{pmatrix}.	\] 
 Then, applying $\alpha_{j}$, we get two new shape matrices which each have an extra $1$ and $-1$ appearing on the diagonal. The extra $-1$ that appears will lie at the $(j,j)$ position in one shape matrix, and the $(j+1,j+1)$ position in the other shape matrix. We can then repeat this process until the remaining non-diagonal terms are removed, and the constructed shape matrix has the correct number of positive and negative diagonals entries. Further use of the coaction will move the zero entries to the correct place, and we will have constructed the chosen diagonal shape.
 
 For non-diagonal shapes. We first repeat the above process to obtain the correct diagonal entries, then further applications of the coaction will move any remaining non-diagonal entry to the appropriate place. Hence we can construct any shape using the coaction.
\end{proof}

We can now proceed to showing that all possible highest weight representations exist:

\begin{Theorem}\label{theorem: construction of irreducibles}
For every possible allowed pairing $(S,s)$ of self-adjoint shape and central $*$-character such that the signature of $s$ matches $S$, there is a corresponding irreducible representation of $\mcO_{q}(H(N))$.
\end{Theorem}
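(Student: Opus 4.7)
The plan is to realize the desired irreducible $\pi$ of type $(S,s)$ by first constructing a big cell representation $\pi_0$ with central character $s$, then using the $\mcO_q(U(N))$ coaction to deform its shape into $S$, and finally extracting the correct irreducible component.

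First I would realize $s$ as the central character of a big cell representation of the correct signature. Let $(N_+,N_-,N_0)$ be the signature of $s$, and fix any $\epsilon\in\{1,-1,0\}^N$ in standard form with matching signature. Inverting the Harish-Chandra formula \eqref{EqHC2} of Proposition \ref{PropHC}, the scalars $s(\sigma_k)$ are the elementary symmetric functions of an unordered multiset that we can write as $\{\epsilon_{(0,i]}q^{2r_i+2i-2}\}$ for suitably chosen $r_i$. I would label the $r_i$'s so that within each block of indices sharing a common sign of $\epsilon_{(0,i]}$ the quantities $r_i+i$ are strictly increasing; this makes $r$ an $\epsilon$-adapted weight, so Theorem \ref{theorem: highest weights of big cell reps} provides a big cell representation $\pi_0$ of shape $\epsilon$ with highest weight $r$, and by Proposition \ref{PropHC} its central character is $s$.

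Next, starting from the diagonal big cell shape $\epsilon$, I would run the shape-construction procedure underlying Proposition \ref{prop: construction of shapes}, adapted to a big cell starting point: the $\alpha_j$ coactions can be used to permute adjacent diagonal entries; to engineer a required sign change I would first use Theorem \ref{theorem: Uq coaction} to introduce an intermediate shape with $\tau(j)=j+1$ and then apply Proposition \ref{prop: diagonalize tau(k)} to split off the desired $\pm 1$ pair on the diagonal; finally further applications of the $\alpha_i$'s move any off-diagonal entries of $S$ into their required positions. This yields a sequence $\alpha_{i_1},\ldots,\alpha_{i_k}$ that carries $\epsilon$ to $S$. Applying the same sequence to $\pi_0$ and decomposing using Theorem \ref{theorem: Uq coaction} and Proposition \ref{prop: diagonalize tau(k)} at each step, some irreducible subrepresentation $\pi$ has shape exactly $S$. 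Because the $\mcO_q(U(N))$ coaction fixes each $\sigma_k$ (Proposition \ref{prop: UN coaction}), this $\pi$ retains central character $s$, and by Theorem \ref{theorem: shape and highest weight determine rep} it is determined up to isomorphism by the pair $(S,s)$.

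The principal obstacle is the second step, where one must show that every self-adjoint shape $S$ of the prescribed signature lies in the forward orbit of $\epsilon$ under the $\alpha_i$'s, and that the matching irreducible component actually appears at each branching. Proposition \ref{prop: construction of shapes} gives this when the starting shape is a character shape, so the work is to carry the same manipulations through from a diagonal big cell shape while controlling the decomposition at each step; here Proposition \ref{prop: diagonalize tau(k)} is essential because it guarantees that both sign choices appear whenever a $\tau(j)=j+1$ block is diagonalized, so the sign pattern needed to match $S$ remains available throughout.
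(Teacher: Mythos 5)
The fatal problem is in your second step. By Theorem \ref{theorem: Uq coaction}, the coaction $\alpha_k$ does exactly one of two things to a shape: it conjugates the shape matrix by the adjacent transposition $c_k$, or, in the special case $\tau(k)=k+1$, it replaces that off-diagonal $2\times 2$ block by $\mathrm{Diag}\{\pm 1,\mp 1\}$. Both moves either preserve or strictly decrease the amount of off-diagonality: conjugation by $c_k$ preserves the cycle type of $\tau$, and the special case destroys a $2$-cycle of $\tau$. There is no coaction move that creates an off-diagonal block out of a diagonal shape, so your proposed manoeuvre of ``first introducing an intermediate shape with $\tau(j)=j+1$ and then applying Proposition \ref{prop: diagonalize tau(k)}'' is not available: starting from the diagonal big cell shape $\epsilon$ (where $\tau=\id$), the entire forward orbit under the $\alpha_i$'s consists of diagonal shapes, and any target $S$ with $\tau\neq\id$ is unreachable. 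The same obstruction also blocks diagonal targets whose sign pattern differs from $\epsilon$'s in a way that, per the proof of Proposition \ref{prop: construction of shapes}, must be manufactured by splitting an off-diagonal block at a chosen position. This one-way behaviour of the coaction is precisely why Proposition \ref{prop: construction of shapes} takes \emph{character} shapes (which are maximally off-diagonal) as its starting points, and why the paper's proof is forced into a more roundabout route: coact on a character $\chi_S$ of the right signature to reach a big cell representation, tensor with a $U_q(\mfu(N))$-irreducible $V_\lambda$ to adjust the highest weight to the desired central $*$-character, pass to the limit in the $\mbs_{i}$ legs so as to land back on $\chi_S\otimes V_\lambda$ (which still carries a character shape and, by continuity of the central values, the central $*$-character $s$), and only then coact ``downhill'' to the target shape $S$. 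Without some replacement for this mechanism --- some way of producing an irreducible whose shape sits \emph{above} $S$ in the coaction order and whose central character is already $s$ --- your construction only yields the big cell (diagonal-shape) irreducibles.

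A smaller point about your first step: $\epsilon$-adaptedness is an integrality condition, $(r_t+t)-(r_s+s)\in\Z_{>0}$ whenever $\epsilon_{(s,t]}=1$, which depends on the ordering of the signs within $\epsilon$ and not only on the signature; so ``fix any $\epsilon$ in standard form of matching signature and relabel the $r_i$'' does not by itself produce an $\epsilon$-adapted weight. This is repairable (since $s$ is by definition the central $*$-character of \emph{some} big cell representation, one may take that $\epsilon$ and $r$; the paper's closing Remark on shifted permutations explains how to move between different $\epsilon$'s of the same signature), but it matters for your argument because the shape-deformation step would then have to start from whatever $\epsilon$ actually realizes $s$, not from an $\epsilon$ of your choosing.
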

\begin{proof}
We begin by fixing a choice $(S,s)$ of shape and central $*$-character whose irreducible representation we want to construct. We first want to show how to construct a finite dimensional irreducible whose signature is the same as $S$, and with central $*$-character $s$. From Theorem \ref{theorem: fd shapes}, we know every finite dimensional representation has the same shape as a character. Let $\chi_{S}$ be an $\mcO_q(H(N))$ character with the same signature as $S$. Then we can use the $\mcO_q(U(N))$ coaction on $\chi_{S}$ to obtain a big cell representation. We want the coaction to be such that we obtain a big cell representation corresponding to a particular choice of $\epsilon$. If $\chi_{S}$ is a diagonal character of rank $M$, the coaction will always result in an $\epsilon$ with
\[
\epsilon_{1}=\pm 1, ~ \epsilon_{2},...,\epsilon_{M}=1, ~ \epsilon_{i}=0\text{ for }i>M.\]
If $\chi_{S}$ is a non-diagonal character of rank $M$ with $Z_{kk}$ the minimal non-zero diagonal term, we want to obtain $\epsilon$ with
\[
\epsilon_{1}=\pm 1, ~ \epsilon_{M-k+2}=-1, ~ \epsilon_{i}=1\text{ for }2\leq i\leq M, i\neq M-k+2, ~ \epsilon_{j}=0\text{ for }j>M.\]
From Proposition \ref{prop: construction of shapes}, we know it is always possible to choose such a coaction. Then our chosen big cell representation will be a subrepresentation of 
\[
\chi_{S}\otimes \mbs_{i_1}\otimes\ldots \otimes\mbs_{i_j},\]
where $\mbs_{i}$ is the $\mcO_q(U(N))$ irreducible corresponding to $\alpha_{i}$. Further, by Lemma \ref{lemma: character weights}, as well as coinvariance of the centre, we know our chosen big cell representation will have highest weight 
\[
r = (\epsilon_{1}q^{r_{1}},...,\epsilon_{1}q^{r_{1}},0,...,0), \qquad \text{ or } \qquad r = (\epsilon_{1} q^{r_{1}},...,\epsilon_{1} q^{r_{1}},-\epsilon_{1} q^{r_{M-k+2}},...,-\epsilon_{1} q^{r_{M-k+2}},0,...,0).\]
Note that what we are calling a weight here differs slightly from the original defined in Definition \ref{def: TN highest weight}, as we also want to show the corresponding $\epsilon_{(0,i]}$ values at the same time. Taking the set of exponents of $q$ will give the original $\mcO_q^{\epsilon}(T(N))$ weight that the big cell representation factors through. Alternatively, we can get the components of the elementary symmetric polynomials as in Proposition \ref{PropHC} by doubling each exponent and adding $2i-2$ to the $i$th exponent.

Let $V_{\lambda}$ be a $U_q(\mfu(N))$ irreducible of highest weight $\lambda$. Then 
\[
\chi_{S}\otimes \mbs_{i_1}\otimes\ldots \otimes\mbs_{i_j}\otimes V_{\lambda}\]
will contain a big cell subrepresentation of highest weight
\[
r+\lambda = (\epsilon_{1}q^{r_{1}+\lambda_{1}},...,\epsilon_{1}q^{r_{1}+\lambda_{M}},0,...,0),\]
or
\[
r+\lambda = (\epsilon_{1} q^{r_{1}+\lambda_{1}},\epsilon_{1}q^{r_{1}+\lambda_{2}}...,\epsilon_{1} q^{r_{1}+\lambda_{M-k+1}},-\epsilon_{1} q^{r_{M-k+2}+\lambda_{M-k+2}},...,-\epsilon_{1} q^{r_{M-k+2}+\lambda_{M}},0,...,0).\]
By comparing with the possible big cell highest weights in Theorem \ref{theorem: highest weights of big cell reps}, it follows that we can always find an $r$ and $\lambda$ that give the chosen central $*$-character $s$.

Now, let $l^{2}(\mathbb{N})=\{e_{n}\}$ be the basis of the representation $\mbs_{i}$. We can consider the limit
\[
\lim\limits_{n\rightarrow\infty}\mbs(X_{jj})e_{n},\]
which will result in the trivial $\mcO_q(U(N))$ representation. Consider applying this limiting process repeatedly to
\[
\chi_{S}\otimes \mbs_{i_1}\otimes\ldots \otimes\mbs_{i_j}\otimes V_{\lambda},\]
which will reduce to
\[
\chi_{S}\otimes V_{\lambda}.\]
That it reduces to this can be seen in terms of the matrix definition of the coactions; the limit can be viewed as replacing each copy of $X,X^{*}$ in the following with an identity matrix:
\[
T^{*}_{1,j+2}X^{*}_{1,j+1}...X^{*}_{12}\chi_{S}X_{12}...X_{1,j+1}T_{1,j+2}.\]
We know from Proposition \ref{PropHC} that the central $*$-character will act as a finite scalar on any irreducible, so it must be preserved by this limiting process. Hence it follows that $\chi_{S}\otimes V_{\lambda}$ must contain an irreducible $\mcO_q(H(N))$ representation of central $*$-character $s$. By applying Proposition \ref{prop: construction of shapes} to this irreducible, as well as noting again that the centre is coinvariant under $\alpha_{k}$, it follows that we will have constructed an irreducible representation of shape $S$ and central $*$-character $s$.
\end{proof}

From Corollary 3.7 of \cite{DCMo24a}, we know an irreducible $\mcO_q(H(N))$ representation will be bounded as soon as its central $*$-character is bounded. We can therefore summarize the combination of results of Theorems \ref{theorem: shape and highest weight determine rep} and \ref{theorem: construction of irreducibles}:
\begin{Theorem}
Every irreducible bounded $*$-representation of $\mcO_{q}(H(N))$ is uniquely classified by the pair $(S,s)$, where $S$ is a self-adjoint shape, and $s$ is a central $*$-character of a big cell representation of the same signature as $S$.
\end{Theorem}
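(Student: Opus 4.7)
The plan is to assemble the statement from the results already established in the paper, with the only remaining content being a bookkeeping check that the map $\pi \mapsto (S,s)$ is well-defined (lands in the stated parameter set), and that the two directions of the classification (uniqueness and existence) are already proved.

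First I would record well-definedness. Given any bounded irreducible $*$-representation $\pi$ of $\mcO_q(H(N))$, Theorem~\ref{theorem: unique shape} provides a unique shape $S$, which by Corollary~\ref{cor: self adjoint} must be self-adjoint. A central $*$-character $s$ exists because $\pi$ is irreducible and the $\sigma_k$ generate the centre (Theorem~\ref{TheoCentrPol}), so each $\pi(\sigma_k)$ is scalar. The fact that $s$ is the central character of a big cell representation of signature matching $S$ is exactly the content of the weak-containment discussion preceding Proposition~\ref{PropHC}: every irreducible factors through the $C^*$-envelope of some big cell representation, hence its central values coincide with those of that big cell representation, and the signature of the latter is determined by the shape via the $\mcO_q(U(N))$ coaction (Definition~\ref{def: shape signature}, which is well-defined by coinvariance of the centre).

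Next I would handle injectivity of $\pi \mapsto (S,s)$. This is precisely Theorem~\ref{theorem: shape and highest weight determine rep}: two irreducibles with the same $(S,s)$ are isomorphic, because the shape data together with the central $*$-character pin down the action of every self-adjoint combination $z^*z$ on the (unique, by Theorem~\ref{theorem: unique highest weight}) highest weight vector through the formulae of Theorem~\ref{theorem: highest weights of Zsk} applied to all restrictions $Z_{\downarrow^{m} S,k}$ and to the diagonal $Z_{ii}$ coming from quantum traces of $\mcO_q(H(N-m))$.

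Surjectivity onto the claimed parameter set is Theorem~\ref{theorem: construction of irreducibles}: for each compatible pair $(S,s)$ one constructs an irreducible of that shape and central character by tensoring a suitable character $\chi_S$ with $\mcO_q(U(N))$ irreducibles $\mbs_{i_1} \otimes \cdots \otimes \mbs_{i_j}$ to realize the signature via Proposition~\ref{prop: construction of shapes}, then twisting by a $U_q(\mfu(N))$ irreducible $V_\lambda$ to adjust the highest weight so that the elementary symmetric polynomials (Proposition~\ref{PropHC}) hit the prescribed $s$, and finally extracting the appropriate irreducible subrepresentation (existence of which is guaranteed by the $X_{jj}$-limit argument that reduces $\chi_S \otimes \mbs_{i_1} \otimes \cdots \otimes V_\lambda$ back to $\chi_S \otimes V_\lambda$ while preserving the central $*$-character).

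The last thing to verify is that every such irreducible is automatically bounded; this is Corollary~3.7 of \cite{DCMo24a}, which says that boundedness is equivalent to boundedness of the central $*$-character, and a central $*$-character of a big cell representation is bounded by construction. No step requires new computation — the only mildly delicate point, which I would expect to be the main obstacle if any, is confirming that the signature matching condition in the statement is exactly the compatibility condition used in Theorem~\ref{theorem: construction of irreducibles}; this follows by tracing Definitions~\ref{def: central char signature} and \ref{def: shape signature} through the Harish-Chandra formula \eqref{EqHC2} and noting that both definitions agree on the big cell representation chosen to certify weak containment. Combining these ingredients yields the classification.
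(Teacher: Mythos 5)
Your proposal is correct and matches the paper's approach exactly: the paper proves this theorem by simply combining Theorem~\ref{theorem: shape and highest weight determine rep} (uniqueness) with Theorem~\ref{theorem: construction of irreducibles} (existence), together with the remark that boundedness follows from Corollary~3.7 of \cite{DCMo24a} once the central $*$-character is bounded. Your additional well-definedness check (unique self-adjoint shape via Theorem~\ref{theorem: unique shape} and Corollary~\ref{cor: self adjoint}, and signature compatibility via weak containment) is exactly the bookkeeping the paper leaves implicit.
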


The following was shown as part of the proof of Theorem \ref{theorem: construction of irreducibles}, but is worth mentioning on its own:
\begin{Cor}
Every finite dimensional $\mcO_q(H(N))$ irreducible appears as a subrepresentation of $\chi\otimes V$, for some $\mcO_q(H(N))$ character $\chi$, and $U_q(\mfu(N))$ irreducible $V$.
\end{Cor}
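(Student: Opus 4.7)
The plan is to extract this as an immediate consequence of the machinery developed for Theorem~\ref{theorem: construction of irreducibles}, together with the shape restriction from Theorem~\ref{theorem: fd shapes} and the uniqueness provided by Theorem~\ref{theorem: shape and highest weight determine rep}.

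Let $W$ be a finite dimensional irreducible $\mcO_q(H(N))$-representation with shape $S$ and central $*$-character $s$. First, I would apply Theorem~\ref{theorem: fd shapes} to produce an $\mcO_q(H(N))$-character $\chi$ whose shape coincides with $S$. Second, I would invoke the construction carried out inside the proof of Theorem~\ref{theorem: construction of irreducibles}: a suitable $U_q(\mfu(N))$ highest weight $\lambda$ is chosen so that, after building a big cell subrepresentation inside $\chi \otimes \mbs_{i_1}\otimes\cdots\otimes\mbs_{i_j}\otimes V_\lambda$ and then collapsing each $\mbs_{i_k}$ factor via the limit $\lim_{n\to\infty}\mbs(X_{jj})e_n$ (a process which preserves the central $*$-character by the argument given there), one obtains an irreducible subrepresentation of $\chi \otimes V_\lambda$ whose invariants are exactly $(S,s)$. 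Third, Theorem~\ref{theorem: shape and highest weight determine rep} identifies this subrepresentation with $W$, yielding the desired embedding.

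The only point requiring extra care is that $V_\lambda$ be finite dimensional, since $\chi$ is one-dimensional and a finite dimensional $W$ can only embed in $\chi \otimes V_\lambda$ when $V_\lambda$ is itself finite dimensional. I do not expect this to be a genuine obstacle: reading off $\lambda$ from the matching relation $r+\lambda$ appearing in the proof of Theorem~\ref{theorem: construction of irreducibles}, where $r$ is determined by $\chi$ via Lemma~\ref{lemma: character weights}, the character-type signature of $s$ forced by Theorem~\ref{theorem: fd shapes} should translate into $\lambda$ being dominant integral, equivalently into the $\epsilon$-adapted condition of Theorem~\ref{theorem: highest weights of big cell reps} being satisfied with strictly positive integer spacings. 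The main place where I would need to be careful is the bookkeeping between the weights $r$, $\lambda$, and $s$ across the conventions used in Section~\ref{section: coaction def} (big cell weights) and Section~\ref{section: highest weights for shapes} (general highest weights for a fixed shape), in particular tracing through how the character weights of Lemma~\ref{lemma: character weights} interact with the dominant integral constraint on $\lambda$.
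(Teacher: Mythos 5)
Your proposal is correct and takes essentially the same route as the paper, which simply records that this corollary was already established inside the proof of Theorem~\ref{theorem: construction of irreducibles}: take a character $\chi$ whose shape matches $S$ (possible by Theorem~\ref{theorem: fd shapes}), run the big-cell-plus-limiting construction to produce an irreducible with invariants $(S,s)$ inside $\chi\otimes V_\lambda$, and identify it with the given representation via Theorem~\ref{theorem: shape and highest weight determine rep}. Your side concern about $V_\lambda$ being finite dimensional is not actually needed for the statement as written (it only asks for a $U_q(\mfu(N))$ irreducible, and a finite-dimensional subrepresentation could in principle sit inside an infinite-dimensional tensor product), so it does not affect the argument.
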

\begin{Rem}
We can actually go one step further than the above corollary, and explain explicitly how to choose a character and $U_q(\mfu(N))$ irreducible to produce a particular finite dimensional $\mcO_q(H(N))$ representation. We first note the following: Let $r$, $r^{\prime}$, be weights where $r$ is $\epsilon$-adapted, and $r^{\prime}$ is $\epsilon^{\prime}$-adapted, with $\epsilon\neq\epsilon^{\prime}$ of the same signature. Then if $r$ and $r^{\prime}$ produce the same central $*$-character, we must be able to relate $r$ and $r^{\prime}$ via "shifted" permutations, i.e. maps of the form
\[
\epsilon_{(0,i]}q^{r_{i}}\mapsto \epsilon^{\prime}_{(0,j]}q^{r^{\prime}_{j}+(j-i)}, \qquad \epsilon_{(0,j]}q^{r_{j}}\mapsto \epsilon^{\prime}_{(0,i]}q^{r^{\prime}_{i}-(j-i)}, \qquad i<j.\]
Hence, given the highest weight of a finite dimensional $\mcO_q(H(N))$ irreducible of the form in Theorem \ref{theorem: highest weights of Zsk}, we can apply shifted permutations to it to obtain a highest weight such that the positive signature weights (respectively negative signature weights) are consecutive terms. The choice of character weight and $U_q(\mfu(N))$ weight can then be taken directly from this.
\end{Rem}
\begin{Exa}
The finite dimensional $\mcO_q(H(4))$ shapes of rank $4$ are as follows:
\[
\pm\begin{pmatrix} 1& & & \\ &  1& & \\ & &  1& \\ & & & 1 \end{pmatrix}, \qquad \pm\begin{pmatrix} 0& & & e^{-i\theta} \\ &  1& & \\ & &  1& \\ e^{i\theta} & & & 0 \end{pmatrix}, \qquad \begin{pmatrix} 0& & &e^{-i\theta_{1}} \\ &0 & e^{-i\theta_{2}}& \\ & e^{i\theta_{2}} & 0 & \\ e^{-\theta_{1}} & & & 0 \end{pmatrix}.\]
The first case is just the $\mcO_q(T(4))$ embedding, so we will show how to construct the second and third cases. By Theorem \ref{theorem: highest weights of Zsk}, the second shape will correspond to $\epsilon=\{\pm 1,1,1,-1\}$, and so will have highest weights of the form \[
(\epsilon_{1}q^{r_{1}},\epsilon_{1}q^{r_{2}},\epsilon_{1}q^{r_{3}},-\epsilon_{1}q^{r_{4}}), \qquad r_{1}\leq r_{2}\leq r_{3}.\]
In this case, we already have $\epsilon$ in the correct form, so we see that we simply need to choose a character $\chi$ and $\mcO_q(T(4))$ irreducible $V_{\lambda}$ with weights
\[
(\epsilon_{1}q^{t_{1}},\epsilon_{1}q^{t_{1}},\epsilon_{1}q^{t_{1}},-\epsilon_{1}q^{t_{4}}), \qquad (q^{\lambda_{1}},q^{\lambda_{2}},q^{\lambda_{3}},q^{\lambda_{4}}),\]
respectively such that
\[
(r_{1},r_{2},r_{3},r_{4}) = (t_{1}+\lambda_{1},t_{1}+\lambda_{2},t_{1}+\lambda_{3},t_{4}+\lambda_{4}).\]
For example, we could simply take
\[
\text{h.w.}(\chi) = (\epsilon_{1}q^{0},\epsilon_{1}q^{0},\epsilon_{1}q^{0},-\epsilon_{1}q^{r_{4}}), \qquad \text{h.w.}(V_{\lambda}) = (q^{r_{1}},q^{r_{2}},q^{r_{3}},q^{0}).\]
For the third shape, again by Theorem \ref{theorem: highest weights of Zsk} we see it will correspond to $\epsilon = \{\pm 1,-1,1,-1\}$, and so have highest weights of the form
\[
(\epsilon_{1}q^{r_{1}},-\epsilon_{1}q^{r_{2}},\epsilon_{1}q^{r_{3}},-\epsilon_{1}q^{r_{4}}), \qquad r_{1}\leq r_{3}+1, \qquad r_{2}\leq r_{4}+1.\]
This time, $\epsilon$ is not in the form we want, so we will need to apply a shifted permutation to the weight, to get
\[
(\epsilon_{1}q^{r_{1}},-\epsilon_{1}q^{r_{2}},\epsilon_{1}q^{r_{3}},-\epsilon_{1}q^{r_{4}})
\mapsto (\epsilon_{1}q^{r_{1}},\epsilon_{1}q^{r_{3}+1},-\epsilon_{1}q^{r_{2}-1},-\epsilon_{1}q^{r_{4}})
 = (\epsilon_{1}q^{r_{1}},\epsilon_{1}q^{r_{1}+n_{1}+1},-\epsilon_{1}q^{r_{2}-1},-\epsilon_{1}q^{r_{2}+n_{2}}),\]
for $n_{1},n_{2}\in\mathbb{N}_{>0}$. We need to similarly shift the $\mcO_q(H(4))$ character weight, which becomes
\[
(\epsilon_{1}q^{t_{1}},-\epsilon_{1}q^{t_{2}},\epsilon_{1}q^{t_{1}+1},-\epsilon_{1}q^{t_{2}+1})\mapsto (\epsilon_{1}q^{t_{1}},\epsilon_{1}q^{t_{1}+2},-\epsilon_{1}q^{t_{2}-1},-\epsilon_{1}q^{t_{2}+1}),\]
and so we know $\chi\otimes V_{\lambda}$ will contain an $\mcO_q(H(4))$ irreducible of highest weight
\[
(\epsilon_{1}q^{t_{1}+\lambda_{1}},\epsilon_{1}q^{t_{1}+\lambda_{2}+2},-\epsilon_{1}q^{t_{2}+\lambda_{3}-1},-\epsilon_{1}q^{t_{2}+\lambda_{4}+1}).\]
Here we need to be careful with our choice of $t$ and $\lambda$, to ensure that we have $\lambda_{1}\leq \lambda_{2}\leq \lambda_{3}\leq\lambda_{4}$. We will also need to reverse the shifted permutation we applied to $\chi$. For example, we can choose 
\[
\text{h.w.}(\chi) = (\epsilon_{1}q^{r_{1}},-\epsilon_{1}q^{r_{2}-n_{1}+1},\epsilon_{1}q^{r_{1}+1},-\epsilon_{1}q^{r_{2}-n_{1}+2}), \qquad \text{h.w.}(V_{\lambda}) = (q^{0},q^{n_{1}-1},q^{n_{1}-1},q^{n_{1}+n_{2}-2}).\]
\end{Exa}

\section*{Acknowledgements}
The author thanks Kenny De Commer for introducing him to the reflection equation algebra, and for many useful comments and discussions.

\end{document}